\documentclass[11pt]{amsart}

\usepackage{amssymb, amsfonts, amsmath, amsthm,mathtools}
\usepackage{enumitem} 

\RequirePackage{hyperref}
\RequirePackage[small,bf]{caption}

\renewcommand{\emph}{\textbf}

\newcommand{\bmatr}[1]{\begin{bmatrix}#1\end{bmatrix}}

\newcommand{\bC}{\mathbb{C}}
\let\C\bC
\newcommand{\N}{\mathbb{N}} 
\newcommand{\R}{\mathbb{R}}
\newcommand{\bR}{\mathbb{R}}
\newcommand{\bS}{\mathbb{S}}

\newcommand{\range}{\mathrm{Range\,}}
\newcommand{\im}{\mathrm{Range\,}}

\newcommand{\Id}{I}

\renewcommand{\phi}{\varphi}
\renewcommand{\Re}{\mathrm{Re}\,} 
\renewcommand{\Im}{\mathrm{Im}\,}
\newcommand{\id}{\mathrm{id}}
\newcommand{\Herm}{\mathrm{Herm}}

\DeclareMathOperator{\rank}{rank}
\DeclareMathOperator{\tr}{tr}
\DeclareMathOperator{\Tr}{Tr}
\DeclareMathOperator{\diag}{diag} 
\DeclareMathOperator{\Capa}{Cap} 
\DeclareMathOperator{\DS}{DS}

\newcommand{\RR}{\mathcal{R}} 
\newcommand{\VV}{\mathcal{V}} 
\newcommand{\LL}{\mathcal{L}} 
\newcommand{\DD}{\mathcal D}
\newcommand{\PP}{\mathcal P} 

\def\sv{\boldsymbol s}

\theoremstyle{plain}
\newtheorem{theo}{Theorem}[section]
\newtheorem{lemma}[theo]{Lemma}
\newtheorem{propo}[theo]{Proposition}
\newtheorem{proposition}[theo]{Proposition}
\newtheorem{coro}[theo]{Corollary}

\theoremstyle{definition}
\newtheorem{defi}[theo]{Definition}

\theoremstyle{remark}
\newtheorem{remark}[theo]{Remark}
\newtheorem{exa}[theo]{Example}

\usepackage{tensor}
\usepackage[]{natbib}
\usepackage{tcolorbox}
\usepackage{mdframed}
\usepackage{tikz}
\usetikzlibrary{patterns,decorations.markings,calc,arrows.meta}

\definecolor{moved}{RGB}{0,100,180}
\definecolor{note}{RGB}{80,120,60}

\renewcommand{\phi}{\varphi}

\usepackage{titlesec}

\titleformat{\section}
  {\normalfont\Large\bfseries}{\S\thesection}{1em}{}
\titleformat{\subsection}
  {\normalfont\large\bfseries}{\S\thesubsection}{1em}{}
\titleformat{\subsubsection}
  {\normalfont\normalsize\bfseries}{\S\thesubsubsection}{1em}{}

\newtheorem*{mainthm}{Theorem}

\title{
An algebraic characterization of non-singular matrix semicircles
}
\bigskip
\author{
Vladislav Kargin
}
\thanks{email:
vkargin@binghamton.edu; current address: 4400 Vestal Pkwy East, Department of Mathematics, Binghamton University, Binghamton, 13902-6000, USA}

\begin{document}

\begin{abstract}
Let $A_1, \ldots, A_r$ be Hermitian $n \times n$ matrices and 
$S = \sum A_i \otimes s_i$ the associated matrix semicircle, 
where $s_1, \ldots, s_r$ are free semicircular variables. We 
prove that the following are equivalent: (i) the matrix pencil 
$A = \sum A_i x_i$ is LR-semisimple (decomposes, up to 
left--right equivalence, as a direct sum of unsplittable 
pencils); (ii) $S$ is non-singular at $t = 0$ (the 
matrix-valued Cauchy transform has a continuous boundary limit 
near the origin); (iii) the covariance map 
$\eta\colon X \mapsto \sum A_i X A_i$ is symmetrically 
DS-scalable (there exists $C \succ 0$ with 
$\eta(C) = C^{-1}$). When these hold, the spectral density 
satisfies $f(0) = \frac{1}{\pi}\,\mathrm{tr}(C)$, where $C$ 
is the unique trace minimizer of the solution set 
$\{W \succ 0 : \eta(W)\,W = I\}$.

The proof combines algebraic and analytic ingredients. On the 
algebraic side, we establish the equivalence (i) 
$\Leftrightarrow$ (iii) using Gurvits' capacity theory for 
indecomposable maps and a geodesic reflection theorem in the 
Riemannian manifold of positive definite matrices, which 
upgrades DS-scalability to symmetric DS-scalability for 
self-adjoint completely positive maps. On the analytic side, 
we prove (iii) $\Rightarrow$ (ii) via a Lyapunov--Schmidt 
reduction of Speicher's equation at a trace-minimizing 
solution, showing that the Jacobian of the bifurcation 
equations is positive definite. This removes a stability 
hypothesis that was required in earlier approaches.
\end{abstract}

\maketitle

\tableofcontents  

\section{Introduction} 
\label{section_introduction}

Let $A_1, \ldots, A_r \in M_n(\mathbb{C})$ be Hermitian matrices 
and let $s_1, \ldots, s_r$ be free standard semicircular variables 
in a non-commutative probability space $(\mathcal{A}, \varphi)$. 
The random variable $S = \sum_{i=1}^r A_i \otimes s_i$ is called 
a \emph{matrix semicircle}. Its spectral distribution encodes 
the interplay between the algebraic structure of the matrix 
pencil $A = \sum A_i x_i$ and the free probabilistic behavior 
of the semicircular variables. A natural question is: when is 
the spectral distribution of $S$ non-singular?

In \citet{msy2023}, it was shown that the distribution of $S$ 
has no atom at~$0$ if and only if the matrix pencil $A$ has 
full inner rank (equal to $n$). In this paper, we address the 
finer question of when the distribution has a bounded 
continuous density at $0$. The answer turns out to be 
governed by an algebraic property of the pencil that we call 
\emph{LR-semisimplicity}: $A$ is LR-semisimple if it 
decomposes, up to left--right equivalence, as a direct sum 
of unsplittable pencils.

To the Hermitian matrix pencil $A$ one associates the completely positive 
(CP) map $\eta_A\colon M_n(\mathbb{C}) \to M_n(\mathbb{C})$ 
defined by $\eta_A(X) = \sum_{i=1}^r A_i X A_i$, which is the 
covariance map of the semicircle $S$. We say that $\eta_A$ 
is \emph{symmetrically DS-scalable} if there exists a positive 
definite matrix $C \succ 0$ such that the rescaled map 
$X \mapsto C^{1/2}\,\eta_A(C^{1/2} X C^{1/2})\,C^{1/2}$ is 
doubly stochastic (i.e., unital and trace-preserving), or 
equivalently, $\eta_A(C) = C^{-1}$.

The spectral distribution of $S$ is encoded by its 
matrix-valued Cauchy transform 
$G_S(z) = \mathbb{E}[(zI \otimes 1_{\mathcal{A}} - S)^{-1}]$, 
and the spectral density is recovered from its boundary 
behavior via Stieltjes inversion. We say that $S$ is 
\emph{non-singular} if the matrix-valued Cauchy transform 
has a continuous non-tangential boundary limit on the real 
axis; in particular this implies that the spectral density 
$f(x) = \frac{1}{\pi}\,\Im[-\tr\, G_S(x + i0^+)]$ is 
bounded and continuous. The formal definition is given in 
Definition~\ref{defi_nonsingularity}.

\begin{samepage}
\bigskip
\noindent\textbf{Main Result.}
\begin{mainthm}[A]
Let $A = \sum_{i=1}^r A_i x_i$, $A_i^* = A_i$, be a Hermitian
matrix pencil, $S = \sum A_i \otimes s_i$ the associated matrix
semicircle, and $\eta\colon X \mapsto \sum A_i X A_i$ its
covariance map. The following are equivalent:
\begin{enumerate}[label=\textup{(\roman*)}]
\item $A$ is LR-semisimple.
\item $S$ is non-singular at $t = 0$.
\item $\eta$ is symmetrically DS-scalable.
\end{enumerate}
Moreover, when these hold, the spectral density satisfies
\[
  f(0) = \frac{1}{\pi}\,\tr(C)
  \;\geq\; \frac{1}{\pi}\,\Capa(\eta)^{-1/(2n)}
  \;=\; \frac{1}{\pi\sqrt{e}\,\Delta(S)},
\]
where $C$ is the unique trace minimizer of
$\bS = \{W \succ 0 : \eta(W)\,W = I\}$.
\end{mainthm}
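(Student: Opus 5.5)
The proof will go through the cycle (i)$\Leftrightarrow$(iii), (iii)$\Rightarrow$(ii), (ii)$\Rightarrow$(iii), followed by the formula for $f(0)$.

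\emph{Algebraic part, (i)$\Leftrightarrow$(iii).} For an unsplittable (indecomposable up to left--right equivalence) pencil, the map $\eta$ is indecomposable in Gurvits' sense. Gurvits' capacity theory then gives $\Capa(\eta)>0$ with the infimum attained, so $\eta$ is DS-scalable: there are $C_1,C_2\succ0$ with $C_1^{1/2}\eta(C_2^{1/2}\cdot C_2^{1/2})C_1^{1/2}$ doubly stochastic. For a direct sum of unsplittable blocks the scaling is assembled blockwise, and a left--right equivalence $A\mapsto PAQ$ is absorbed into the scaling matrices. To pass from DS-scalability to \emph{symmetric} DS-scalability I would use self-adjointness of $\eta$. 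The scalings form the solution set of $\eta(C_2)=C_1^{-1}$, $\eta(C_1)=C_2^{-1}$, and this set is invariant under the swap $(C_1,C_2)\mapsto(C_2,C_1)$. Geodesic convexity of the log-capacity functional on the manifold of positive definite matrices, together with a reflection argument, should then produce a fixed point $C_1=C_2=C$, i.e.\ $\eta(C)=C^{-1}$. Concretely, the midpoint of the geodesic between the two scalings should be a minimizer of the symmetric functional $\log\det\eta(W)-\log\det W$.

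For the converse, a symmetric scaling makes the rescaled map doubly stochastic. A doubly stochastic CP map splits along invariant subspaces as a direct sum of indecomposable doubly stochastic maps, by Perron--Frobenius theory. Transporting this decomposition back through $C^{1/2}$ gives an LR-decomposition of $A$ into unsplittable summands.

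\emph{Analytic part.} For (iii)$\Rightarrow$(ii), Speicher's equation $G(z)=(z-\eta(G(z)))^{-1}$ at $z=0$ becomes $\eta(W)W=I$ with $G=-iW$, so the set $\bS$ is exactly the set of candidate boundary values. I would show that $\bS$ is nonempty by (iii) and that $\operatorname{tr}$ has a unique minimizer $C$ on it, using convexity of $\bS$ in suitable coordinates. Next I would linearize Speicher's map at $-iC$. Its kernel is generally nontrivial; this is exactly the stability hypothesis that earlier work assumed away. I would therefore perform a Lyapunov--Schmidt reduction onto the kernel and show that the Jacobian of the bifurcation equations is positive definite, using first-order optimality of $C$ as trace minimizer. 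The implicit function theorem then gives a continuous solution branch $G(z)\to -iC$ as $z\to0$ non-tangentially. Uniqueness of the solution with positive imaginary part identifies this branch with $G_S$. Hence $f(0)=\frac1\pi\operatorname{tr}C$.

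For (ii)$\Rightarrow$(iii), the continuous boundary value $G(i0^+)=-iW$ has $W\succeq0$ solving $\eta(W)W=I$. This forces $W$ to be invertible, so $W\succ0$ and $\eta(W)=W^{-1}$. The bound $\operatorname{tr}C\ge \Capa(\eta)^{-1/(2n)}$ follows from AM--GM, $\operatorname{tr}C\ge n\det(C)^{1/n}$, combined with $\det\eta(C)=\det C^{-1}$ and the definition of capacity. The equality with $1/(\sqrt e\,\Delta(S))$ uses the paper's Fuglede--Kadison determinant formula.

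I expect the main obstacle to be positive definiteness of the reduced Jacobian in the Lyapunov--Schmidt step. The approach I would try first is to express the reduced Jacobian as the Hessian of the trace restricted to the tangent directions of $\bS$ at $C$, which is positive because $C$ is the minimizer.
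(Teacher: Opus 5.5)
\textbf{Overall.} Your analytic half follows the paper's route, and your guess for the reduced Jacobian is right: the paper's Jacobian $\frac12[\Tr(Y_jCY_k)+\Tr(Y_kCY_j)]$ is the Hessian at $\mathbf s=0$ of $\mathbf s\mapsto\Tr\bigl(C\exp(\sum_j s_jY_j)\bigr)$. The genuine gap is in (i)$\Rightarrow$(iii), at the upgrade from DS-scalability to symmetric DS-scalability.

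\textbf{The missing reflection step.} Knowing that $C_1\mathbin{\#}C_2$ minimizes $g(W)=\log\det\eta(W)-\log\det W$ gives nothing new. The minimizers of $g$ are exactly the solutions of $\eta(\eta(W)^{-1})=W^{-1}$, i.e.\ first entries of $2$-cycles of $F(X)=\eta(X)^{-1}$. But $C_1$ and $C_2$ are already such minimizers, and neither need be a fixed point.

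What you need is that $F$ maps the geodesic $\gamma$ from $C_1$ to $C_2$ onto itself reversed, $F(\gamma(t))=\gamma(1-t)$, so that $t=\frac12$ gives a fixed point. The paper proves this (Lemma~\ref{lem:geodesic}, Proposition~\ref{prop:geodesic_reflection}) after congruence to $C_1=I$, $C_2=P$. First, $\Tr[P^{(1-t)/2}\eta(P^t)P^{(1-t)/2}]$ is a nonnegative combination of exponentials in $t$, hence convex and at most $n$ on $[0,1]$. Second, capacity gives that matrix determinant at least $1$. AM--GM then forces $\eta(P^t)=P^{t-1}$.

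Your convexity idea can also be completed. The function $t\mapsto g(P^t)$ is convex and equals $\log\Capa(\eta)$ at both ends, hence is constant, so $h''\equiv0$ in Lemma~\ref{lemma_logdet_convexity} (with $B=P$). Equality in Kadison--Schwarz then puts $\log P$ in the multiplicative domain of the unital map $X\mapsto P^{1/2}\eta(X)P^{1/2}$, and $\eta(P)=I$ yields $\eta(P^s)=P^{s-1}$. Either way, this is the heart of the implication and cannot be left as ``a reflection argument should produce a fixed point.''

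\textbf{Other steps that need repair.}

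In (iii)$\Rightarrow$(i), Perron--Frobenius decompositions along invariant subspaces only see pairs with $\RR=\LL$, whereas splitting pairs may have $\RR\neq\LL$. For example, with $\sigma_x,\sigma_z$ the Pauli matrices, $\Phi(X)=\frac12(\sigma_xX\sigma_x+\sigma_zX\sigma_z)$ is doubly stochastic and irreducible, since $\sigma_x,\sigma_z$ have no common invariant subspace. Yet its pencil is LR-equivalent to $Ix_1+\sigma_x\sigma_z x_2$, which splits. The right tool is two-sided. For DS $\Phi$ and a splitting pair, $\Phi(P_{\RR})$ is supported in $\LL$, satisfies $\Phi(P_{\RR})\preceq I$, and has trace $\dim\LL$. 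Hence $\Phi(P_{\RR})=P_{\LL}$ and $B_i(\RR^\perp)\subseteq\LL^\perp$ (Lemma~\ref{lemma_splitting_DS}), and induction on $n$ finishes.

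For the lower bound on $f(0)$, the definition of capacity gives $\Capa(\eta)\le\det\eta(C)/\det C=\det(C)^{-2}$, which is the wrong direction. You need that $C$ attains the capacity (Lemma~\ref{lemma_exact_fixed_points_capacity}). Also, with the normalized trace, AM--GM reads $\tr C\ge\det(C)^{1/n}$.

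For the Jacobian, minimality of $C$ only makes the Hessian semidefinite; definiteness comes from $\Tr(CZ^2)>0$ for $Z\neq0$. Identifying the reduced Jacobian with that Hessian is the real work. The paper does it via $\LL(\tilde K_u)=-C^2$, which uses $\Tr(Y_jC)=0$, together with the multiplicative-domain identity $\Phi(Y_jY_k)=Y_jY_k$.

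In (ii)$\Rightarrow$(iii), you need the boundary value to be Hermitian, not merely accretive. This requires $W(\bar u)=W(u)^*$ (Lemma~\ref{lemma_W_symmetry}).

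Finally, uniqueness of the trace minimizer need not come from convexity of $\bS$, which you do not prove. Once every trace minimizer is shown to equal $W(0^+)$, uniqueness follows from HFS uniqueness.
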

\end{samepage}
Here $\Delta(S) = \exp\!\Big(\int \log|x|\,d\mu_S(x)\Big)$ 
is the Fuglede--Kadison determinant of $S$, and the 
identity $\Capa(\eta)^{-1/(2n)} = (\sqrt{e}\,\Delta(S))^{-1}$ 
follows from~\cite{mai_speicher2024}.

\medskip
\noindent\textbf{Corollary for unsplittable pencils.}


\begin{mainthm}[B]
If $A$ is unsplittable, then $S$ is non-singular and
$f(0) = \frac{1}{\pi}\,\tr(C)$, where $C$ is the
\emph{unique} positive definite solution of $\eta(C) = C^{-1}$.
\end{mainthm}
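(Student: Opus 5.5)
Theorem~B will follow from Theorem~A together with a uniqueness argument specific to the unsplittable case. An unsplittable pencil is trivially LR-semisimple: it is a direct sum with a single summand. So condition (i) of Theorem~A holds, and therefore $S$ is non-singular at $t=0$. Non-singularity at other real points is not part of Theorem~A as stated. If the intended meaning is non-singularity on the whole real axis, I would apply the same Lyapunov--Schmidt argument at a general point $t$. A cleaner alternative is to note that away from $0$ the Cauchy transform is analytic, by the standard regularity of operator-valued semicircles with full-rank covariance. For the plan I read the claim as non-singularity at $t=0$, which is what Theorem~A delivers, and I obtain $f(0)=\frac{1}{\pi}\tr(C)$ with $C$ the trace minimizer of $\bS$.

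The real content is that $\bS=\{W\succ 0:\eta(W)W=I\}$ is a single point when $A$ is unsplittable. The plan has four steps.

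\textbf{Step 1.} Use the fact, presumably established in the algebraic part of the paper, that unsplittability of $A$ is equivalent to indecomposability (irreducibility in the Gurvits sense) of the CP map $\eta$. Concretely, no nontrivial pair of subspaces $V,W$ with $\dim W\le \dim V$ satisfies $A_i V\subseteq W$ for all $i$.

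\textbf{Step 2.} Note that for self-adjoint $\eta$, every $W\in\bS$ makes the rescaled map $\eta_W(X)=W^{1/2}\eta(W^{1/2}XW^{1/2})W^{1/2}$ doubly stochastic. Thus each element of $\bS$ is a symmetric DS-scaling.

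\textbf{Step 3.} Invoke Gurvits' uniqueness theorem for indecomposable maps. For such maps, the DS-scaling pair $(C_1,C_2)$ with $C_2^{1/2}\eta(C_1^{1/2}\cdot C_1^{1/2})C_2^{1/2}$ doubly stochastic is unique up to the scalar ambiguity $(C_1,C_2)\mapsto(\lambda C_1,\lambda^{-1}C_2)$. Equivalently, the capacity functional $\log\det\eta(X)-\log\det X$ is strictly geodesically convex modulo scaling, so its minimizer ray is unique.

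\textbf{Step 4.} For $W\in\bS$ the pair is $(W,W)$. Two elements $W,W'$ would therefore give $W'=\lambda W$ and $W'=\lambda^{-1}W$, forcing $\lambda=1$. Hence $\bS=\{C\}$, the trace minimizer is this unique $C$, and $\eta(C)=C^{-1}$ is exactly the defining equation of $\bS$.

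The main obstacle is Step~3, specifically transferring Gurvits' uniqueness to our normalization. His uniqueness is stated for operator scaling (left and right scalings), and I must check that indecomposability as used there matches unsplittability of the pencil. I would prove uniqueness directly. Given two solutions $C,C'$, consider the geodesic $\gamma(s)=C\#_s C'$ in the manifold of positive definite matrices. The function $s\mapsto \log\det\eta(\gamma(s))-\log\det\gamma(s)$ is convex and is minimized at both endpoints, so it is constant on $[0,1]$. Equality in the geodesic convexity inequality (Ando's concavity of $\eta$ along geodesics combined with log-det) forces $C^{-1/2}C'C^{-1/2}$ to commute appropriately with the Kraus structure. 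Its spectral projections then produce invariant subspace pairs $(V,W)$ for the $A_i$. Unsplittability rules these out unless $C'=\lambda C$, and then $\lambda=1$ as in Step~4.
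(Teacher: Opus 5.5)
Your proposal is correct and follows the paper's route. Non-singularity comes from Theorem~A, since an unsplittable pencil is trivially LR-semisimple; as in the paper's own proof, this only yields non-singularity at $t=0$. Uniqueness comes from unsplittable $\Leftrightarrow$ indecomposable together with Gurvits' uniqueness of the capacity minimizer up to scaling, with the scalar then fixed by the symmetric normalization. That is exactly the content of Lemma~\ref{lem:indecomposable_sym_scalable}.

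One caveat concerns your aside about $t_0\neq 0$. The Cauchy transform is generally not analytic there: it has square-root branch points at spectral edges, e.g.\ at $\pm 2$ for the semicircle itself. Global non-singularity should instead come from the $L^2$ bound of Proposition~\ref{prop:L2_bound} together with algebraicity (Proposition~\ref{propo_algebraicity}), which give boundedness and continuous boundary values rather than analyticity.
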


 \medskip
 \noindent\textbf{Hierarchy.}
 For nonzero Hermitian pencils, the relevant properties 
 form a strict hierarchy:
 \[
 \underbrace{\text{unsplittable}}_{\text{Thm~B}} 
 \;\Longrightarrow\; 
 \underbrace{\text{LR-semisimple}}_{\text{Thm~A}} 
 \;\Longrightarrow\; 
 \underbrace{\text{full}}_{\text{\cite{msy2023}: no atom at $0$}}
 \]

Together, these results give a complete picture.  LR-semisimplicity 
is the precise algebraic condition governing non-singularity of the matrix semicircle at $t = 0$ and ensuring that its covariance map is scalable to the doubly-stochastic map. If the pencil is in addition unsplittable then the scaling is unique up to unitary transformations. If the pencil is not LR-semisimple but still full, then we lose the bounded density but the spectral distribution does not have an atom at $x = 0$ (see the cusp singularity in Example~\ref{exa:singular_density}). 

The formal versions of these theorems are proved as follows: 
Theorem~A as Theorem~\ref{theo_main_general} in 
\S~\ref{section_proof_Thm_A} and  
Theorem~B as Corollary~\ref{theo_main_unsplittable} in 
\S~\ref{section_corollaries}.


\medskip
\noindent\textbf{Strategy of proof.}
The proof consists of three components.

\bigskip
\textbf{Algebraic (\S3):} (i) $\Leftrightarrow$ (iii). \\
LR-semisimple $\Longleftrightarrow$ symmetrically DS-scalable.

\medskip\noindent
$\Rightarrow$:\quad The pencil--map dictionary (\S2.4) reduces
an LR-semisimple pencil to indecomposable blocks. Gurvits'
capacity theory gives DS-scalability of each block. Since
$\eta_A$ is self-adjoint, DS-scalability produces a 2-cycle
$(C, D)$ of the map $F(X) = \eta(X)^{-1}$. We show that $F$
acts as a geodesic reflection in the Riemannian manifold of
positive definite matrices, so that the geometric mean
$C \mathbin{\#} D$ is a fixed point of $F$. This yields
symmetric DS-scalability.

\medskip\noindent
$\Leftarrow$:\quad A splitting lemma for doubly stochastic maps
shows that every splitting pair automatically completes to a
full block-diagonal decomposition. The result then follows
by induction on the matrix size.

\bigskip
\textbf{Forward analytic (\S4):} (iii) $\Rightarrow $ (ii). \\
sym DS-scalable $\Longrightarrow$ non-singular.

\medskip\noindent
We study the HFS form of Speicher's equation,
$\eta(W)W + uW = I$, for the modified Cauchy transform
$W(u) = iG_S(iu)$ with $\Re\, W(u) \succ 0$ for
$\Re\, u > 0$. We apply the Lyapunov--Schmidt reduction at
a trace-minimizing solution of $\eta(W)W = I$. The key step
is establishing that the Jacobian of the reduced bifurcation
equations is positive definite, which follows from a
multiplicative domain identity. The implicit function theorem
then produces a real-analytic extension of $W(u)$ through
$u = 0$, yielding non-singularity and the exact density
formula $f(0) = \frac{1}{\pi}\,\mathrm{tr}(C)$.

\bigskip
\textbf{Converse analytic (\S5):} (ii) $\Rightarrow$  (iii). \\
non-singular $\Longrightarrow$ sym DS-scalable.

\medskip\noindent
A symmetry of Speicher's equation gives
$W(\bar{u}) = W(u)^*$, so $W$ is Hermitian on the real axis.
Non-singularity provides a boundary limit $W_0 = W(0^+)$,
which is therefore Hermitian, positive semidefinite (as a
limit of accretive matrices), and satisfies
$\eta(W_0)\,W_0 = I$. This forces $W_0 \succ 0$, giving
symmetric DS-scalability.

\medskip
\noindent\textbf{Organization.}\;
Section~\ref{sec:definitions} collects the definitions and 
structural results used throughout: matrix semicircles and 
their Cauchy transforms, the hierarchy of matrix pencil 
properties (unsplittable, LR-semisimple, full), completely 
positive maps and their scalings, and the pencil--map 
dictionary relating the two sides. 
Section~\ref{sec:algebraic} establishes the algebraic 
equivalence LR-semisimple $\Leftrightarrow$ symmetrically 
DS-scalable. Section~\ref{sec:analytic_engine} develops the 
Lyapunov--Schmidt analysis that gives the forward analytic 
implication sym DS-scalable $\Rightarrow$ non-singular. 
Section~\ref{sec:main} proves the main theorem by combining 
these with the converse analytic implication. 
Section~\ref{sec:discussion} discusses open questions, 
including the nature of singularities for non-LR-semisimple 
pencils, biased matrix semicircles, and the non-self-adjoint 
case.

\section{Definitions and structural results} 
\label{sec:definitions}


\subsection{Matrix semicircles and covariance maps} 

Let $(\mathcal{A}, \varphi)$ be a non-commutative probability 
space, where $\varphi$ is a faithful tracial state. Self-adjoint 
elements $s_1, \ldots, s_r \in \mathcal{A}$ are called 
\emph{free standard semicircular variables} if they are freely 
independent and each $s_i$ has the semicircular distribution 
$d\mu(x) = \frac{1}{2\pi}\sqrt{4 - x^2}\,dx$ on $[-2, 2]$.

\begin{defi}
A \emph{matrix semicircular variable} is an element 
$S \in M_n(\mathbb{C}) \otimes \mathcal{A}$ of the form
\begin{equation}
\label{equ_defi_semicircle}
S = \sum_{i=1}^{r} A_i \otimes s_i,
\end{equation}
where $A_1, \ldots, A_r \in M_n(\mathbb{C})$ are Hermitian 
matrices and $s_1, \ldots, s_r \in \mathcal{A}$ are free 
standard semicircular variables.
\end{defi}

We write 
$\mathbb{E} = \mathrm{id}_{M_n(\mathbb{C})} \otimes \varphi 
\colon M_n(\mathbb{C}) \otimes \mathcal{A} \to M_n(\mathbb{C})$ 
for the conditional expectation onto the matrix subalgebra.

\begin{defi}
The \emph{covariance map} of a matrix semicircular variable 
$S = \sum_{i=1}^r A_i \otimes s_i$ is the linear map 
$\eta_S \colon M_n(\mathbb{C}) \to M_n(\mathbb{C})$ defined by
\begin{equation}
\label{defi_covariance}
\eta_S(X) := \mathbb{E}[S X S] 
  = \sum_{i=1}^{r} A_i X A_i,
\end{equation}
where the second equality follows from 
$\varphi(s_i s_j) = \delta_{ij}$ (by freeness and the 
normalization of the semicircular law). Since each $A_i$ 
is Hermitian, $\eta_S$ is a completely positive map satisfying 
$\eta_S = \eta_S^*$ (self-adjoint with respect to the 
Hilbert--Schmidt inner product).
\end{defi}

The spectral distribution of $S$ is encoded by its Cauchy 
transform. For $b \in M_n(\mathbb{C})$ such that 
$b \otimes \mathrm{id}_{\mathcal{A}} - S$ is invertible, 
define
\begin{equation}
\label{defi_Cauchy}
G_S(b) = \mathbb{E}\big[
  (b \otimes \mathrm{id}_{\mathcal{A}} - S)^{-1}\big].
\end{equation}
In particular, for $z \in \mathbb{C}^+ := \{z : \Im z > 0\}$, 
the scalar evaluation $G_S(zI)$ is well-defined. The 
\emph{spectral density} of $S$ (when it exists) is recovered 
from the Stieltjes inversion formula:
\begin{equation}
\label{equ_Stieltjes}
f(x) = \frac{1}{\pi} 
  \lim_{\varepsilon \downarrow 0} 
  \Im\big[-\tr\, G_S\big((x + i\varepsilon)I\big)\big],
\end{equation}
where $\tr$ denotes the normalized trace on $M_n(\mathbb{C})$.

The Cauchy transform can be computed from the covariance map 
via the following result.

\begin{theo}[Speicher's equation]
\label{theo_Cauchy_matrix_semicircle}
Let $S$ be a matrix semicircular variable with covariance 
map~$\eta_S$. Then for all $b \in M_n(\mathbb{C})$ with 
$\Im b \succ 0$, the Cauchy transform $G = G_S(b)$ satisfies
\begin{equation}
\label{equ_Speicher}
b\, G(b) = I + \eta_S\!\big(G(b)\big)\, G(b).
\end{equation}
\end{theo}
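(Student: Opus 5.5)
The plan is to derive Speicher's equation from the operator-valued free cumulant machinery, using the fact that $S$ is an $M_n(\mathbb{C})$-valued semicircular element with covariance $\eta_S$. Equivalently, we can argue combinatorially via moments and then pass to the resolvent. The cleanest route runs through the following steps.

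\textbf{Step 1 (semicircularity over $M_n$).} We verify that $S=\sum A_i\otimes s_i$ is an operator-valued semicircular element over $B=M_n(\mathbb{C})$ with respect to $\mathbb{E}=\mathrm{id}\otimes\varphi$. Concretely, for $b_1,\dots,b_{k-1}\in B$ we expand
\[
\mathbb{E}[S b_1 S b_2\cdots b_{k-1}S]=\sum_{i_1,\dots,i_k} A_{i_1}b_1A_{i_2}\cdots b_{k-1}A_{i_k}\,\varphi(s_{i_1}\cdots s_{i_k}).
\]
By freeness and semicircularity, $\varphi(s_{i_1}\cdots s_{i_k})$ equals the number of non-crossing pair partitions $\pi$ of $\{1,\dots,k\}$ such that $i$ is constant on each block. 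Summing over indices for a fixed $\pi$ produces exactly the nested expression obtained by applying $\eta_S$ to the interior of each pair, e.g.\ $A_{i}b_1A_{i}\mapsto\eta_S(b_1)$. The $B$-valued moments are therefore the sum over $NC_2(k)$ of nested $\eta_S$-expressions, which is the defining property of a $B$-valued semicircular element with covariance $\eta_S$.

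\textbf{Step 2 (the equation as formal power series).} For $b$ invertible with $\|b^{-1}\|$ small we expand
\[
G(b)=\sum_{k\ge0}\mathbb{E}\big[b^{-1}(Sb^{-1})^k\big].
\]
We decompose each non-crossing pairing by the partner $j$ of the first element $1$. The block $\{1,j\}$ encloses an arbitrary pairing of $\{2,\dots,j-1\}$, which contributes $\eta_S$ applied to a copy of $G$. The remainder $\{j+1,\dots,k\}$ is an arbitrary pairing, contributing another copy of $G$. This gives $G=b^{-1}+b^{-1}\eta_S(G)G$, i.e.\ $bG=I+\eta_S(G)G$, valid for $\|b^{-1}\|<1/\|S\|$. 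The manipulation is legitimate because the series converge absolutely there; the geometric bound $\|\mathbb{E}[b^{-1}(Sb^{-1})^k]\|\le\|b^{-1}\|^{k+1}\|S\|^k$ suffices.

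\textbf{Step 3 (analytic continuation to $\Im b\succ0$).} When $\Im b\succ0$, the operator $b\otimes 1-S$ is invertible, since its imaginary part is $\Im b\otimes 1\succ0$ and $S$ is self-adjoint. Hence $G_S$ is holomorphic on the connected open set $H^+(M_n)=\{\Im b\succ0\}$. The map $b\mapsto bG(b)-I-\eta_S(G(b))G(b)$ is holomorphic there and vanishes on the open subset $\{b:\Im b\succ0,\ \|b^{-1}\|<1/\|S\|\}$. This subset is nonempty, since it contains $b=i\lambda I$ for large $\lambda$. By the identity theorem for holomorphic maps on connected domains in $\mathbb{C}^{n^2}$, the equation holds on all of $H^+(M_n)$.

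The main obstacle is the combinatorial bookkeeping in Steps 1–2: one must check carefully that the freeness-plus-semicircle moment formula yields exactly pairings with matched indices and no crossing contributions, so that the index sums collapse to nested $\eta_S$. Alternatively, this step can be replaced by citing the standard fact (Speicher; Nica–Shlyakhtenko–Speicher) that $\sum A_i\otimes s_i$ is $M_n$-valued semicircular with covariance $X\mapsto\sum A_iXA_i$, together with the operator-valued $R$-transform identity $G(b)=(b-\eta_S(G(b)))^{-1}$. Multiplying that identity by $b-\eta_S(G)$ recovers \eqref{equ_Speicher}.
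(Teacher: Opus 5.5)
Your argument is correct, and it takes a more self-contained route than the paper, which gives no proof and only cites Mingo--Speicher and Speicher. In those references the equation comes from the general operator-valued machinery. One first shows that $S$ is an $M_n(\mathbb{C})$-valued semicircular element: only the second $M_n$-valued cumulant survives, and it equals $\eta_S$. One then substitutes $R(b)=\eta_S(b)$ into the general relation $G(b)=(b-R(G(b)))^{-1}$, which is itself proved by a first-block decomposition in the operator-valued moment--cumulant formula.

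Your route builds that combinatorics in by hand for this special case. You use the scalar free Wick formula for $s_1,\dots,s_r$. You collapse the index sums over a non-crossing pairing into nested $\eta_S$'s, by removing an interval block $\{p,p+1\}$ and inducting. You then apply the first-return recursion $G_k=\sum_j b^{-1}\eta_S(G_{j-2})\,G_{k-j}$ and sum it as a Cauchy product of norm-convergent series. Your Step~1 claim that $S$ is $M_n$-valued semicircular is never actually used, since Step~2 needs only the moment formula. So you avoid operator-valued freeness and the $R$-transform entirely.

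You also make explicit the extension from a neighbourhood of infinity to all of $\{\Im b\succ 0\}$, which the paper delegates to the references. For the identity theorem it is worth saying why this domain is connected: it is convex. The norms and resolvents make sense because the $s_i$ generate a C$^*$-algebra, which the paper also tacitly assumes.

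In exchange, the cited route is more general. It covers any operator-valued semicircular element with completely positive covariance, not just those realized as $\sum A_i\otimes s_i$. It also connects directly to the HFS existence and uniqueness theory on the operator upper half-plane that the paper relies on later.
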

For a proof see Theorem 2 in Chapter 9 of \cite{mingo_speicher2017} or Sections~6.3--6.4 in~\cite{speicher2019}. This equation was derived by combining the operator-valued $R$-transform theory of \cite{voiculescu95} and the combinatorial cumulant machinery of \cite{speicher98}. A brief outline was given in \cite{fobs2006}. See also \cite{hfs2007} for existence and uniqueness of the solution.

\medskip

Following \cite{hfs2007}, it is convenient to reformulate 
equation~\eqref{equ_Speicher} on the right half-plane. 
Recall that a matrix $W \in M_n(\mathbb{C})$ is 
\emph{accretive} if $\Re W := (W + W^*)/2 \succeq 0$, and 
\emph{strictly accretive} if $\Re W \succ 0$. The 
substitution $u = -iz$ and $W(u) := iG_S(iu\cdot I)$ 
transforms Speicher's equation at $b = iu \cdot I$ into
\begin{equation}
\label{eq:HMS}
\eta_S\!\big(W(u)\big)\, W(u) + u\, W(u) = I.
\end{equation}
By Theorem~2.1 of \cite{hfs2007}, equation~\eqref{eq:HMS} 
has a unique strictly accretive solution $W(u)$ in the 
domain $\{\Re u > 0\}$. The density at $x = 0$ can be 
expressed in terms of the boundary behavior of $W$:
\begin{equation}
\label{equ_density_W}
f(0) = \frac{1}{\pi}\,\tr\!\big(\Re W(0^+)\big),
\end{equation}
provided the non-tangential limit $W(0^+) := 
\lim_{u \to 0,\, \Re u > 0} W(u)$ exists.


\begin{center}

\begin{tikzpicture}[>=Stealth, scale=1.2]

\def\vt{2.0}       
\def\alph{0.7}     
\def\xmax{5.0}     
\def\ymin{-0.8}
\def\ymax{5.5}

\fill[blue!4] (0,\ymin) rectangle (\xmax,\ymax);

\fill[blue!18, opacity=0.8]
  (0,\vt) -- (\xmax, {\vt+\alph*\xmax}) -- (\xmax, {\vt-\alph*\xmax}) -- cycle;

\draw[blue!70!black, thick]
  (0,\vt) -- (\xmax, {\vt+\alph*\xmax})
  node[pos=0.55, above, sloped, font=\small, text=blue!70!black]
    {$\operatorname{Im} u - t = \alpha\,\operatorname{Re} u$};
\draw[blue!70!black, thick]
  (0,\vt) -- (\xmax, {\vt-\alph*\xmax})
  node[pos=0.55, below, sloped, font=\small, text=blue!70!black]
    {$\operatorname{Im} u - t = -\alpha\,\operatorname{Re} u$};

\draw[->] (-0.6,0) -- (\xmax+0.4,0) node[below] {$\operatorname{Re} u$};
\draw[->] (0,\ymin-0.3) -- (0,\ymax+0.3) node[left] {$i\,\operatorname{Im} u$};

\node[above right, font=\small, text=gray] at (4.0,0.2)
  {$\mathbb{C}_+^{(R)}$};

\filldraw[red!70!black] (0,\vt) circle (2pt)
  node[left=4pt, font=\normalsize] {$it$};

\draw[red!70!black, thick, ->]
  ({0.9},\vt) arc[start angle=0, end angle={atan(\alph)}, radius=0.9];
\node[red!70!black, font=\small] at ({1.25}, {\vt+0.25})
  {$\arctan\alpha$};

\draw[densely dashed, thick, orange!80!black,
      decoration={markings, mark=at position 0.55 with {\arrow{>}},
                            mark=at position 0.85 with {\arrow{>}}},
      postaction={decorate}]
  plot[smooth, tension=0.7] coordinates
    {(4.0,{\vt+0.9}) (2.8,{\vt+0.55}) (1.5,{\vt+0.25}) (0.5,{\vt+0.05}) (0.0,\vt)};
\node[orange!80!black, font=\small] at (4.2, {\vt+1.15}) {$u\to it$};

\node[blue!60!black, font=\large] at (3.0,\vt) {$\Gamma_\alpha(it)$};

\draw[thick] (-0.08,\vt) -- (0.08,\vt);

\node[below left, font=\small] at (0,0) {$0$};

\end{tikzpicture}
\end{center}


\def\Hrp{\mathbb{C}_+^{(R)}}
\begin{defi}[Stolz cone in the right half-plane]
Let $\Hrp:=\{u\in\mathbb{C}:\Re u>0\}$. 
For \(t\in\mathbb{R}\) and \(\alpha>0\), the (non-tangential) Stolz cone with vertex at \(it\) and aperture \(\alpha\) is
\[
\Gamma_\alpha(it)
\;:=\;
\bigl\{\,u\in\Hrp:\; |\Im u - t| \le \alpha\,\Re u \,\bigr\}.
\]
We say \(f:\Hrp\to\mathbb{C}^{m\times m}\) has a \emph{nontangential limit} \(L\) at \(it\) if
\[
\lim_{\substack{u\to it\\ u\in\Gamma_\alpha(it)}} f(u)=L
\quad\text{for some (equivalently, for all) }\alpha>0.
\]
\end{defi}


\begin{defi}[Non-singularity]
\label{defi_nonsingularity}
Let $S\in M_n(\mathbb C)\otimes\mathcal A$ be a self-adjoint matrix semicircular variable 
with Cauchy transform $G_S(z)$ for $\Im z > 0$, and let $W(u) := iG_S(iu)$ 
for $\Re u > 0$.

We say $S$ is \emph{non-singular at $t_0 \in \mathbb{R}$}
if there exist $\delta > 0$ and $\alpha > 0$ such that for every $t$ with 
$|t - t_0| < \delta$:
\begin{enumerate}
\item[\textup{(i)}] $W$ is bounded on $\Gamma_\alpha(it) \cap \{|u - it| < \delta\}$, and
\item[\textup{(ii)}] the non-tangential limit 
$W(it + 0^+) := \displaystyle\lim_{\substack{u \to it \\ u \in \Gamma_\alpha(it)}} W(u)$ 
exists,
\end{enumerate}
and the boundary function $t \mapsto W(it + 0^+)$ is continuous on $(t_0 - \delta,\, t_0 + \delta)$.

We say $S$ is \emph{non-singular} if it is non-singular at every $t_0 \in \mathbb{R}$.
Otherwise, $S$ is \emph{singular}.
\end{defi}

\begin{remark}
Under the substitution $z = iu$, the spectral point $z = x_0$ on the real 
axis corresponds to $u = it_0$ with $t_0 = -x_0$. Non-singularity at 
$t_0 = 0$ therefore governs the behavior of the spectral density near 
$x_0 = 0$, which is the case relevant to the main results of this paper.
\end{remark}

\medskip
\begin{propo}[Algebraicity of the Cauchy transform]
\label{propo_algebraicity}
Let $S = \sum_{i=1}^r A_i \otimes s_i$ be a matrix semicircular 
variable of size~$n$ with covariance map~$\eta_S$. Then the 
scalar Cauchy transform 
$H(z) := \tr\, G_S(zI)$ is an algebraic function of~$z$. 

In particular, the spectral measure~$\mu_S$ decomposes as 
$\mu_S = f(x)\,dx + \sum_{j=1}^{N} m_j\,\delta_{x_j}$ with 
$N < \infty$, where the density~$f$ of the absolutely continuous 
part is real-analytic on 
$\mathbb{R} \setminus \{x_1, \ldots, x_N\}$ except at 
finitely many additional points where it has algebraic 
singularities. The atoms $x_j$ correspond to poles of~$H$ on 
the real axis.
\end{propo}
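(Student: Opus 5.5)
The plan is to eliminate the operator unknowns from Speicher's equation \eqref{equ_Speicher} at $b = zI$ and obtain a polynomial equation for $H(z)$. Write the equation as the system of $n^2$ polynomial equations
\[
z\,G - I - \eta_S(G)\,G = 0
\]
in the $n^2+1$ unknowns $(z, G_{jk})$. Each entry is a polynomial of degree at most $2$ in the $G_{jk}$ with coefficients linear in $z$. Adjoin the extra unknown $h$ and the equation $h - \tr G = 0$. This defines an affine variety $V \subset \mathbb{C}^{1+n^2+1}$.

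The first step is to show that, over a generic $z$, the fibre of $V$ is finite. I would use the Jacobian of the map $G \mapsto zG - \eta_S(G)G$ at large $|z|$. There $G = z^{-1}I + O(z^{-2})$, and the derivative is $zX - \eta_S(X)G - \eta_S(G)X = zX + O(1)$, which is invertible. By the holomorphic implicit function theorem, the actual Cauchy transform $z \mapsto G_S(zI)$ is therefore a local branch of $V$ near $z = \infty$. On that branch the projection $V \to \mathbb{C}_z$ is étale. Consequently the irreducible component $V_0$ of $V$ containing this branch has dimension $1$ and dominates the $z$-line.

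The second step is elimination. Project $V_0$ to the $(z,h)$-plane. The image is an irreducible curve, and its closure is the zero set of some irreducible polynomial $P(z,h) \not\equiv 0$. This polynomial is not of the form $P(z)$ alone, since the projection to $z$ is dominant. Then $P(z, H(z)) = 0$ on a neighbourhood of $\infty$, and by the identity theorem on all of $\mathbb{C}^+$, since $H$ is analytic there. So $H$ is algebraic. If one prefers to avoid component arguments, the same conclusion follows from a Gröbner/resultant elimination of the $G_{jk}$ from the ideal generated by the $n^2+1$ equations. The only thing to check is that the elimination ideal in $\mathbb{C}[z,h]$ is nonzero, and this follows from finiteness of the generic fibre.

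For the consequences, use the Stieltjes-inversion structure. The function $H$ is the Cauchy transform of the probability measure $\mu_S$, and it satisfies $P(z,H)=0$ with $P$ of some degree $d$ in $h$. An algebraic function has only finitely many branch points and poles. Write $P = a_d(z)h^d + \dots$. The exceptional set consists of the zeros of $a_d$ and of the discriminant $\operatorname{disc}_h P$, and it is finite.

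Near a real $x$ outside this set, every branch is analytic. The boundary values $H(x+i0)$ therefore continue analytically, so $f = -\frac1\pi \Im H(x+i0)$ is real-analytic there. At the finitely many exceptional points, Puiseux expansions give algebraic-type behaviour. This means $H \sim c(z-x_j)^{p/q}$, so the singularities are algebraic. A point mass $m_j$ at $x_j$ forces $H(x_j+i\varepsilon) \sim m_j/(i\varepsilon)$, which is a genuine pole $p/q=-1$ in the Puiseux sense; conversely a Cauchy transform of a probability measure can blow up at most like $1/\Im z$. Hence the atoms are exactly the real poles of $H$, and they are finite in number.

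Finally, the singular continuous part vanishes. Indeed $-\Im H(x+i\varepsilon)$ converges locally uniformly away from the finite exceptional set, so $\mu_S$ restricted to the complement of that set is absolutely continuous. The only remaining mass is on finitely many points, and those give atoms.

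I expect the main obstacle to be the first step: making rigorous that the relevant component is one-dimensional, i.e. that the polynomial system does not have positive-dimensional fibres over the branch carrying the Cauchy transform. The large-$z$ Jacobian argument handles this cleanly, so I would lead with it.
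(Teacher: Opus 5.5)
Your proposal is correct and follows the same basic route as the paper: eliminate the entries of $G$ from the quadratic system $zG = I + \eta_S(G)G$ to get a polynomial relation, then read off the structure of $\mu_S$ from the resulting algebraic function. The difference is that you justify the elimination step, and the paper does not.

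The paper invokes ``the elimination theorem'' to produce a nontrivial relation $Q_{pq}(z,g_{pq})=0$ for each entry. This is not automatic, because the solution variety can have positive-dimensional fibres. Take $n=2$ and $A_1=I$, so $\eta_S=\mathrm{id}$. Then every $G=g_+P+g_-(I-P)$ solves the system, where $P$ is an arbitrary idempotent and $g_\pm$ are the roots of $g^2-zg+1=0$. Hence $g_{12}$ is unconstrained, and the elimination ideal in $\mathbb{C}[z,g_{12}]$ is zero.

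Your large-$|z|$ implicit-function argument places the branch $z\mapsto G_S(zI)$ on smooth points of a unique one-dimensional component $V_0$ that dominates the $z$-line. That is exactly what makes the elimination legitimate. Eliminating straight to $h=\tr G$ on $V_0$ also removes the need for the ``algebraic functions form a field'' step.

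One caveat concerns your parenthetical alternative, Gr\"obner elimination from the full ideal. Your justification, ``by finiteness of the generic fibre,'' is not valid as stated: in the example above the generic fibre of $V$ itself is infinite. Only the fibre of $V_0$ is finite. So you should eliminate from the prime ideal of $V_0$ (or a suitable saturation), not from the ideal generated by the $n^2+1$ equations.

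Your treatment of the consequences agrees with the paper's and is more careful. You obtain the finite exceptional set from $a_d$ and the discriminant of the irreducible $P$. You select a single analytic branch near regular real points by connectedness. You also observe that no singular continuous part can live on a finite set. Identifying atoms with leading Puiseux exponent $-1$ is at the same level of precision as the paper's ``poles of $H$''.
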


\begin{proof}
At $b = zI$, Speicher's equation~\eqref{equ_Speicher} reads
\[
z\,G(z) = I + \eta_S\!\big(G(z)\big)\,G(z).
\]
Writing $G(z) = (g_{pq})_{p,q=1}^n$ and expanding $\eta_S$ in 
coordinates, this becomes a system of~$n^2$ polynomial equations 
of degree~$2$ in the~$n^2$ unknowns~$g_{pq}$, with coefficients 
polynomial in~$z$. By the elimination theorem for polynomial 
ideals, each entry~$g_{pq}$ satisfies a nontrivial polynomial 
relation $Q_{pq}(z, g_{pq}) = 0$ and is therefore algebraic 
over~$\mathbb{C}(z)$. Since algebraic functions form a field, 
$H(z) = \tr\,G(z) = \frac{1}{n}\sum_p g_{pp}(z)$ is algebraic. 

As an algebraic function, $H$ has at most finitely many 
singularities on~$\mathbb{R}$, each of which is either a pole 
or a branch point. Poles of~$H$ on the real axis correspond to 
atoms of~$\mu_S$ (with mass given by the residue), and there 
are at most finitely many. Away from these poles and branch 
points, the Stieltjes inversion 
formula~\eqref{equ_Stieltjes} gives a real-analytic density.
\end{proof}

\begin{remark}
\label{rem_algebraicity_context}
If the pencil $A = \sum A_i x_i$ is full (equivalently, 
$\eta_A$ is rank non-decreasing; see 
Theorem~\ref{theo_pencil_map_dictionary}(i)), then $\mu_S$ 
has no atom at zero; more precisely, the mass of the atom at 
zero equals $(n - \mathrm{rank}(A))/n$, where 
$\mathrm{rank}(A)$ is the noncommutative inner rank 
\citep{hoffmann_mai_speicher2024}. The algebraic framework 
connecting inner rank computations to operator-valued free 
probability via the free skew field is developed 
in~\citet{msy2023}. In the full case, the 
spectral measure is therefore purely absolutely continuous, 
$\mu_S = f(x)\,dx$, with~$f$ real-analytic away from finitely 
many algebraic singularities.

Under the stronger assumption that~$\eta_S$ is strictly 
positive (Def.~\ref{defi_indecomposable}(3)), 
\citet{aek2020} prove that the only singularities of~$f$ are 
square-root edges and cubic-root cusps. For polynomials in 
freely independent semicircular variables --- which reduce to 
matrix semicircles via the linearization trick --- algebraicity 
of the Cauchy transform was established by 
\citet{shlyakhtenko_skoufranis2015} using $L^2$-invariants 
and formal power series in noncommuting variables, and extended 
to arbitrary algebraic distributions by \citet{anderson2014} (unpublished).

The main results of this paper address a different question: 
for which matrix semicircles is the density real-analytic in a 
neighborhood of~$0$, when does it satisfy $f(0) > 0$, and what 
controls its value quantitatively?
\end{remark}
The reason we focus on $x = 0$ is the following localization 
result. It relies on an $L^2$ bound on the Cauchy transform 
due to Alt, Erd\H{o}s, and Kr\"uger.

\begin{proposition}[{$L^2$ bound on the Cauchy transform; 
\cite[Prop.~2.1]{aek2020}}]
\label{prop:L2_bound}
Let $S$ be a matrix semicircle with Cauchy transform 
$G_S(z)$ for $z \in \bC^+$. Then
\[
\|G_S(z)\|_2 \;\leq\; \frac{2}{|z|}
\qquad \text{for all } z \in \bC^+,
\]
where $\|X\|_2 := \bigl(\tr(X^* X)\bigr)^{1/2} 
= \frac{1}{\sqrt{n}}\|X\|_F$ is the normalized 
Hilbert--Schmidt norm.
\end{proposition}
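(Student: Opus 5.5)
The plan is to get the bound directly from Speicher's equation (Theorem~\ref{theo_Cauchy_matrix_semicircle}) at $b = zI$, with the operator-valued resolvent $R := (z\otimes 1_{\mathcal A} - S)^{-1}$ used as an auxiliary object. First, $G = \mathbb{E}[R]$ and $S$ commutes with $R$, so $\mathbb{E}[SR] = \mathbb{E}[RS]$. By the semicircular (Schwinger--Dyson) structure underlying Speicher's equation, $\mathbb{E}[SR] = \eta_S(G)G$ and $\mathbb{E}[RS] = G\,\eta_S(G)$. Hence $\eta_S(G)$ commutes with $G$, and
\[
zG = I + \eta_S(G)\,G = I + G\,\eta_S(G), \qquad G = \bigl(z - \eta_S(G)\bigr)^{-1}.
\]
Second, the proof will also use the imaginary-part identity. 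Since $\Im z > 0$, we have $\Im G \preceq 0$, and a short computation gives
\[
-\Im G = G\bigl(\Im z + \eta_S(-\Im G)\bigr)G^* ,
\]
so that, using positivity of $\eta_S$,
\[
\Im z\,\|G\|_2^2 \le \tr(-\Im G) - \tr\bigl(G\,\eta_S(-\Im G)\,G^*\bigr) \le \tr(-\Im G).
\]

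The core step is to combine these into a quadratic inequality for $y := \|G\|_2$ whose constants depend only on $|z|$. I would take the normalized trace inner product of $zG = I + \eta_S(G)G$ with $G$, and separately with the identity. Using that $\eta_S$ is self-adjoint for the Hilbert--Schmidt inner product, the inner products $\langle G, \eta_S(G)G\rangle$ and $\tr(\eta_S(G)G)$ can be rewritten as $\tr\bigl(\eta_S(G)\,GG^*\bigr)$-type quantities. The aim is to control their modulus by $|z|\,y^2$ minus a nonnegative term. Equivalently, I want to show that the ``self-energy'' part $\eta_S(G)G = zG - I$ cannot cancel more than half of $zG$ in $\|\cdot\|_2$, i.e.\ $\|zG - I\|_2 \le \|zG\|_2 - \tfrac12|z|\,y + 1$. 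From an inequality of the form $|z|^2 y^2 \le 2|z|\,y$ one then concludes $y \le 2/|z|$.

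The step I expect to be the main obstacle is precisely this control of the cross term, because the sign of $\tr\bigl(G^*\eta_S(G)G\bigr)$ is not a priori fixed. The naive operator bound $\|R\|_{L^2} \le \sup_x |z-x|^{-1}$ only gives $1/\Im z$, which blows up near the real axis. My first attempt will be to write $G = \mathbb{E}[R]$ via the spectral measure of $S$ and split the spectrum into $\{|x| \le |z|/2\}$, where $|z-x|^{-1} \le 2/|z|$ trivially, and $\{|x| > |z|/2\}$. On the second region I would use $zR = 1 + SR$ together with the commutation $\mathbb{E}[SR] = \mathbb{E}[RS]$ to show that the contribution there also has $L^2$ norm at most $2/|z|$ after taking the conditional expectation. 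If this splitting fails, I would fall back on the argument of \cite[Prop.~2.1]{aek2020}, which exploits the specific quadratic structure of the Dyson equation in a similar way.
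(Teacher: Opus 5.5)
The paper does not prove this proposition; it quotes it from \cite[Prop.~2.1]{aek2020}. Your proposal leaves the one nontrivial step open and falls back on that same citation, so as a standalone argument it has a genuine gap. The concrete routes you sketch for that step would not work:

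\textbf{Your reformulated target is too weak.} From $\|zG-I\|_2\le\|zG\|_2-\tfrac12|z|y+1$ and $|z|y\le\|zG-I\|_2+1$ you only get $y\le 4/|z|$, not $|z|^2y^2\le 2|z|y$. What would give the latter is $\|zG-I\|_2=\|\eta_S(G)G\|_2\le 1$, because $\|zG-I\|_2^2=|z|^2y^2-2\Re(z\tr G)+1$ and $|\tr G|\le y$. Nothing in your outline controls this cross term $\eta_S(G)G$.

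\textbf{The spectral splitting fails twice.} First, the triangle inequality already costs a factor $2$. Second, the piece over $\{|x|>|z|/2\}$ has no a priori bound: $\|R\,1_{\{|S|>|z|/2\}}\|_{L^2}^2=\int_{|x|>|z|/2}|z-x|^{-2}\,d\mu_S(x)$ blows up as $\Im z\downarrow 0$ inside the bulk. Moreover, $\mathbb{E}[R\,1_{\{|S|>|z|/2\}}]$ satisfies no closed equation, so $zR=1+SR$ produces no cancellation there.

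\textbf{The traced imaginary-part identity is not enough.} Used only after taking traces, it gives just $\|G\|_2\le 1/\Im z$.

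\textbf{The missing ingredient.} Use the imaginary-part identity untraced, as a Perron--Frobenius bound for a symmetrized copy of $\eta_S$. Put $M=-G$, so that $\Im M\succ 0$ and $-M^{-1}=z+\eta_S(M)$. Write $M=Q^*UQ$ with $U$ unitary and $Q$ invertible; for example take $T=(\Im M)^{-1/2}(\Re M)(\Im M)^{-1/2}$, $U=(T+i)(T^2+1)^{-1/2}$ and $Q=(T^2+1)^{1/4}(\Im M)^{1/2}$. Multiplying the equation by $Q$ on the left and by $Q^*$ on the right gives
\[
zQQ^*=-U^*-\mathcal F(U),\qquad \mathcal F(X):=Q\,\eta_S(Q^*XQ)\,Q^*.
\]
Taking imaginary parts gives $\Im U=\Im z\,QQ^*+\mathcal F(\Im U)$, with $\Im U\succ 0$. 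The map $\mathcal F$ has Hermitian Kraus operators $QA_iQ^*$, so it is self-adjoint for the Hilbert--Schmidt inner product. Positivity together with $\mathcal F(\Im U)\preceq \Im U$ keeps every power $\mathcal F^k$ bounded on the order interval of $\Im U$. Hence its spectral radius, which equals $\|\mathcal F\|_{2\to 2}$, is at most $1$. Therefore $|z|\,\|QQ^*\|_2\le \|U^*\|_2+\|\mathcal F(U)\|_2\le 2$. By Cauchy--Schwarz, $\|G\|_2^2=\tr\bigl(U^*(QQ^*)U(QQ^*)\bigr)\le\|QQ^*\|_2^2$, which yields $\|G\|_2\le 2/|z|$. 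This is exactly the control of the cross term that your outline identifies as the obstacle but does not supply.
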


\begin{proposition}[Singularity localization]
\label{prop:singularity_localization}
Let $S$ be a matrix semicircle with spectral density $f$. Then
\[
f(x)\le \frac{2}{\pi|x|}
\qquad\text{for all }x\in \mathbb R\setminus(F\cup\{0\}),
\]
where $F$ is the finite exceptional set from Proposition~\ref{propo_algebraicity}.
In particular, $f$ can have an unbounded singularity only at $x=0$.
\end{proposition}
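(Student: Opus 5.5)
The plan is to combine the Stieltjes inversion formula~\eqref{equ_Stieltjes} with the $L^2$ bound of Proposition~\ref{prop:L2_bound}. Fix $x \in \mathbb{R}\setminus(F\cup\{0\})$. By Proposition~\ref{propo_algebraicity}, $x$ is neither an atom nor a branch point of $H(z)=\tr\,G_S(zI)$. Hence $H$ extends continuously (indeed analytically) from $\mathbb{C}^+$ to a neighborhood of $x$. The limit in~\eqref{equ_Stieltjes} therefore exists, and
\[
f(x) = \frac{1}{\pi}\lim_{\varepsilon\downarrow 0}\Im\big[-\tr\,G_S((x+i\varepsilon)I)\big].
\]
It then suffices to bound $|\tr\,G_S(z)|$ uniformly for $z=x+i\varepsilon$ with $\varepsilon$ small.

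The key step is the elementary estimate $|\tr X| \le \|X\|_2$ for the normalized trace. This is Cauchy--Schwarz in the normalized Hilbert--Schmidt inner product: $|\tr(I^*X)| \le \|I\|_2\|X\|_2$, and $\|I\|_2=1$. Combining this with Proposition~\ref{prop:L2_bound} gives
\[
\big|\Im[-\tr\,G_S(z)]\big| \le |\tr\,G_S(z)| \le \|G_S(z)\|_2 \le \frac{2}{|z|} = \frac{2}{\sqrt{x^2+\varepsilon^2}} \le \frac{2}{|x|}.
\]
Letting $\varepsilon\downarrow 0$ yields $f(x)\le \frac{2}{\pi|x|}$.

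For the final assertion, suppose $f$ had an unbounded singularity at some $x_0\neq 0$. The bound above shows $f$ is bounded by $\frac{4}{\pi|x_0|}$ on a punctured neighborhood of $x_0$, since $F$ is finite and the bound holds off $F$. So $f$ cannot blow up at $x_0$. Atoms at $x_0$ are excluded in the same way: the atom mass equals $\lim_{\varepsilon\downarrow 0}\varepsilon\,\Im[-H(x_0+i\varepsilon)]$, and this is at most $\lim_{\varepsilon\downarrow 0}2\varepsilon/|x_0| = 0$.

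The only delicate point is the existence of the boundary limit in~\eqref{equ_Stieltjes} and its identification with the density. Away from $F$ this is supplied by algebraicity. Alternatively, one can avoid it by using the weak-$*$ convergence of $\frac{1}{\pi}\Im[-H(\cdot+i\varepsilon)]\,dx$ to $\mu_S$: the uniform bound $\frac{2}{\pi|x|}$ on compacts away from $0$ passes to the limit measure, bounding its density almost everywhere. I expect no real obstacle beyond this.
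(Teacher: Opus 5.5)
Your proposal is correct and follows the paper's proof essentially step for step. The ingredients are the same: the Cauchy--Schwarz bound $|\tr X|\le \|I\|_2\|X\|_2$ combined with Proposition~\ref{prop:L2_bound}, Stieltjes inversion at points off $F$ where the boundary limit exists, and boundedness of $f$ on a punctured neighborhood of each $x_0\in F\setminus\{0\}$. Your extra remarks are also valid and slightly strengthen the conclusion: ruling out atoms at $x_0\neq 0$ via $\varepsilon\,|H(x_0+i\varepsilon)|\le 2\varepsilon/|x_0|\to 0$, and the weak-$*$ argument, which gives the a.e.\ bound without appealing to algebraicity.
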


\begin{proof}
Let $g(z):=\tr G_S(z)$. For $z=x+i\varepsilon$ with $x\neq 0$ and $\varepsilon>0$,
\[
|g(z)|
\le \|I\|_2\,\|G_S(z)\|_2
= \|G_S(z)\|_2
\le \frac{2}{|z|}.
\]
If $x\notin F$, then $f$ is real-analytic near $x$, hence
\[
f(x)=\frac1\pi\lim_{\varepsilon\downarrow0}\Im[-g(x+i\varepsilon)].
\]
Therefore
\[
f(x)\le \frac1\pi\limsup_{\varepsilon\downarrow0}|g(x+i\varepsilon)|
\le \frac{2}{\pi|x|}.
\]

Now let $x_0\in F\setminus\{0\}$. Since $F$ is finite, there exists a neighborhood
$U$ of $x_0$ such that $U\cap F=\{x_0\}$ and $0\notin U$. For every
$x\in U\setminus\{x_0\}$ we have already proved
\[
f(x)\le \frac{2}{\pi|x|},
\]
and the right-hand side is bounded on $U$. Hence $f$ is bounded on
$U\setminus\{x_0\}$, so $x_0$ is not an unbounded singularity.
Thus an unbounded singularity can occur only at $0$.
\end{proof}


\subsection{Matrix pencils: Splittability and LR-semisimplicity} 

%
A \emph{(linear) matrix pencil} of size $n$ in $r$ variables is 
a formal expression
\[
  A \;=\; \sum_{i=1}^{r} A_i\, x_i, 
  \qquad A_i \in M_n(\mathbb{C}),
\]
where $x_1, \ldots, x_r$ are formal indeterminates.  
A pencil is \emph{nonzero} if at least one $A_i \neq 0$.
We call $A$ 
\emph{Hermitian} if every coefficient matrix $A_i$ is Hermitian. 
Two pencils $A$ and $B$ of the same size are 
\emph{left--right (LR) equivalent}, written $A \sim_{LR} B$, 
if there exist invertible matrices $P, Q \in GL_n(\mathbb{C})$ 
such that $B_i = P A_i Q$ for all $i$.

The matrix pencil $A$ determines a matrix semicircular variable 
$S = \sum_{i=1}^r A_i \otimes s_i$ and a completely positive 
map $\eta_A(X) = \sum_{i=1}^r A_i X A_i^*$. 
When $A$ is Hermitian, $\eta_A$ is self-adjoint 
($\eta_A^* = \eta_A$), a property that will be essential 
in the main results.
Both $S$ and $\eta_A$ depend only 
on the coefficient matrices $A_1, \ldots, A_r$; in particular, 
commutativity or non-commutativity of the formal variables 
$x_i$ plays no role, and we will not distinguish between the 
two.

\begin{defi}\label{defi:full}
A matrix pencil $A = \sum_{i = 1}^r A_i x_i$ of size $n$ is \emph{full} 
if there are no subspaces $\RR, \LL \subset \mathbb{C}^n$ with 
$A_i(\RR) \subseteq \LL$ for all $i$ and $\dim \RR > \dim \LL$.
\end{defi}

\begin{remark}
This coincides with the notion of full inner rank 
(equal to $n$) for the matrix $A$ viewed as an element 
of $M_n(\mathbb{C}\langle x_1, \ldots, x_r\rangle)$ in 
the sense of \cite{cohn85, cohn95}. For linear pencils, the 
inner rank over the free algebra equals the inner rank 
over the commutative polynomial ring, so the distinction 
is immaterial.
\end{remark}

\begin{defi}\label{defi:splittable}
A matrix pencil $A = \sum_{i = 1}^r A_i x_i$ of size $n$ is 
\emph{splittable} if there exist subspaces $\RR, \LL \subset 
\mathbb{C}^n$ with $\dim \RR = \dim \LL = s$, $1 \leq s \leq 
n-1$, such that $A_i(\RR) \subseteq \LL$ for all $i$. 
A pencil is \emph{unsplittable} if it is nonzero and not splittable.
\end{defi}

Note that a $1 \times 1$ nonzero pencil is automatically 
unsplittable, since there are no intermediate dimensions 
$1 \leq s \leq 0$.

\begin{defi}\label{defi:LR-semisimple}
An $n \times n$ matrix pencil $A = \sum_{i=1}^r A_i x_i$ is 
\emph{LR-semisimple} if there exist invertible 
$L, R \in GL_n(\mathbb{C})$ such that
\[
L A_i R \;=\;
\begin{bmatrix}
B^{(1)}_i & & \\
& \ddots & \\
& & B^{(k)}_i
\end{bmatrix}
\qquad \text{for all } i = 1, \ldots, r,
\]
where $B^{(j)}_i \in M_{n_j}(\mathbb{C})$, 
$\,n_1 + \cdots + n_k = n$, and each sub-pencil 
$B^{(j)} = \sum_{i=1}^r B^{(j)}_i x_i$ is unsplittable.
If this can be achieved with $R = L^\ast$, we say $A$ is 
\emph{C-semisimple}.
\end{defi}

Note that for Hermitian pencils, congruence equivalence 
($X \sim_C Y \iff Y = C X C^\ast$ for $C \in GL_n(\C)$) 
preserves Hermiticity while general LR-equivalence does not.

\begin{remark}\label{rem:hierarchy}
These notions form a strict hierarchy: for nonzero pencils,
\[
 \text{unsplittable} \Longrightarrow  \text{C-semisimple} 
  \Longrightarrow \text{LR-semisimple} 
  \Longrightarrow  \text{full}.
\]
(C-semisimplicity does not appear in the main results but is included for completeness of the hierarchy.)

The first and second implications are immediate from the definitions. 
For the third, we first observe that every nonzero 
unsplittable pencil is full. Indeed, suppose 
$A_i(\RR) \subseteq \LL$ for all $i$ with 
$r = \dim \RR > \ell = \dim \LL$. If $\ell \geq 1$, choose 
any $\ell$-dimensional subspace $\RR' \subseteq \RR$; 
then $A_i(\RR') \subseteq \LL$ with 
$\dim \RR' = \dim \LL = \ell$, and $1 \leq \ell \leq n - 1$ 
(since $\ell < r \leq n$), so $A$ is splittable. 
If $\ell = 0$, then all $A_i$ vanish on $\RR$; 
in particular $n \geq 2$ (for $n = 1$ this would force 
all $A_i = 0$), so choosing any $1$-dimensional 
$\RR' \subseteq \RR$ and any $1$-dimensional 
$\LL' \subseteq \mathbb{C}^n$ gives a splitting pair. 
Since fullness is invariant under LR-equivalence and 
inner rank is additive for block-diagonal pencils 
\cite{cohn85}, LR-semisimplicity implies fullness. 
Both implications are strict, even for $2 \times 2$ 
Hermitian pencils; see 
Examples~\ref{exa:LR_not_C} and~\ref{exa:full_not_LR} 
below.
The pencil hierarchy will later be shown to correspond 
to a parallel hierarchy of properties of the covariance 
map $\eta_A$; see Section \ref{sec:pencil_map_equivalences}.
\end{remark}

\begin{exa}\label{exa:LR_not_C}
The Hermitian pencil
\begin{align*}
A &=\; \begin{pmatrix} 0 & x_2 + (1+i)\,x_1 \\ 
  x_2 + (1-i)\,x_1 & 0 \end{pmatrix},
  \\
&A_1 = \begin{pmatrix} 0 & 1+i \\ 1-i & 0 \end{pmatrix},\quad
A_2 = \begin{pmatrix} 0 & 1 \\ 1 & 0 \end{pmatrix},
\end{align*}
is LR-semisimple but not C-semisimple.

\medskip\noindent
\textbf{LR-semisimple.}\; 
Take $L = I$ and $R = \bigl(\begin{smallmatrix} 0 & 1 \\ 
1 & 0 \end{smallmatrix}\bigr)$. Then
\[
L A_1 R = \begin{pmatrix} 1+i & 0 \\ 0 & 1-i \end{pmatrix},
\qquad
L A_2 R = \begin{pmatrix} 1 & 0 \\ 0 & 1 \end{pmatrix},
\]
so $A$ is LR-equivalent to a direct sum of two 
$1 \times 1$ (unsplittable) pencils.

\medskip\noindent
\textbf{Not C-semisimple.}\;
Suppose $U = \bigl(\begin{smallmatrix} a & b \\ 
c & d \end{smallmatrix}\bigr) \in GL_2(\mathbb{C})$ 
simultaneously diagonalizes $UA_iU^*$ for $i = 1, 2$. 
The off-diagonal entries are:
\begin{align*}
(UA_2 U^*)_{12} &= b\bar{c} + a\bar{d}, \\
(UA_1 U^*)_{12} &= b(1{-}i)\bar{c} + a(1{+}i)\bar{d}.
\end{align*}
Setting both to zero and substituting 
$a\bar{d} = -b\bar{c}$ from the first equation into 
the second:
\[
b(1{-}i)\bar{c} - (1{+}i)\,b\bar{c} 
  = -2i\,b\bar{c} = 0.
\]
Hence $b\bar{c} = 0$, which forces $a\bar{d} = 0$. 
But then $\det U = ad - bc = 0$, 
contradicting the invertibility of $U$.
\end{exa}


\begin{exa}\label{exa:full_not_LR}
The Hermitian pencil
\[
A = \begin{pmatrix} 0 & x_1 \\ x_1 & x_2 \end{pmatrix},
\qquad
A_1 = \begin{pmatrix} 0 & 1 \\ 1 & 0 \end{pmatrix},\quad
A_2 = \begin{pmatrix} 0 & 0 \\ 0 & 1 \end{pmatrix},
\]
is full but not LR-semisimple.

\medskip\noindent
\textbf{Full.}\;
If $A_i(\RR) \subseteq \LL$ for all $i$, then in 
particular $A_1(\RR) \subseteq \LL$, so 
$\dim\LL \geq \dim A_1(\RR) = \dim\RR$ since $A_1$ 
is invertible. Hence $A$ is full.

\medskip\noindent
\textbf{Splittable.}\;
$A_i(\operatorname{span}(e_1)) \subseteq 
\operatorname{span}(e_2)$ for both $i$.

\medskip\noindent
\textbf{Not LR-semisimple.}\;
Since $A$ is $2 \times 2$ and splittable, 
LR-semisimplicity would mean that $A$ is LR-equivalent 
to a diagonal pencil: there exist invertible 
$L, R \in GL_2(\mathbb{C})$ such that $LA_iR$ is 
diagonal for every $i$.

Since $\operatorname{rank}(A_2) = 1$, the diagonal 
matrix $LA_2R$ also has rank $1$, so exactly one 
diagonal entry is nonzero. Swapping columns of $R$ 
and rows of $L$ if necessary, we may assume 
$LA_2R = \operatorname{diag}(0, \lambda)$ with 
$\lambda \neq 0$.

Write $R = (R_1 \mid R_2)$ in columns. The first 
column of $LA_2R$ is $LA_2R_1 = 0$, so 
$A_2 R_1 = 0$, i.e., 
$R_1 \in \ker A_2 = \operatorname{span}(e_1)$. 
Hence $R_1 = a\,e_1$ for some $a \neq 0$, and we 
can write 
$R = \bigl(\begin{smallmatrix} a & b \\ 0 & d 
\end{smallmatrix}\bigr)$ with $a, d \neq 0$.

Now $LA_1R$ must also be diagonal. In particular, its 
$(2,1)$-entry vanishes:
\[
(LA_1R)_{21} = (L A_1 R_1)_2 = 0.
\]
But $A_1 R_1 = a\,A_1 e_1 = a\,e_2$, so $L$ must 
send $e_2$ to a vector with zero second component. 
On the other hand, the $(1,2)$-entry of $LA_2R$ 
vanishes, giving $(LA_2 R_2)_1 = 0$. Since 
$A_2 R_2 = d\,e_2$, this says $L$ sends $e_2$ to a 
vector with zero first component. Together, 
$Le_2 = 0$, contradicting the invertibility of $L$.
\end{exa}


\subsection{Completely positive maps: indecomposability, capacity, DS-scalability} 

Recall that a matrix $B \in M_n(\bC)$ is \emph{positive semidefinite}, $B \succeq 0$,  if $(x, B x) \geq 0$ for all $x \in \bC^n$, and \emph{positive definite}, $B \succ 0$,  if $(x, B x) > 0$ for all non-zero $x \in \bC^n$. 

\begin{defi}
\label{defi_CP}
A linear map $\eta: M_n(\bC) \to M_n(\bC)$ is said to be
\begin{enumerate}
\item \emph{positive} if $\eta(B) \succeq 0$ for all $B \succeq 0$;
\item \emph{completely positive (CP)} if all amplifications $\eta^{(k)}: M_k(\bC) \otimes M_n(\bC) \to M_k(\bC) \otimes M_n(\bC)$, defined by $\eta^{(k)} := \id_{M_k(\bC)} \otimes \eta$ for $k \in \N$, are positive. 
\end{enumerate}
\end{defi}

By the Choi--Kraus theorem (see, e.g., \cite[Theorem~2.22]{watrous2018}), for every CP map $\eta$ there exist matrices $A_1, \ldots, A_r \in M_n(\bC)$ such that 
\[
\eta(X) = \sum_{i=1}^r A_i X A_i^\ast.
\]
The matrices $A_1, \ldots, A_r$ are called \emph{Kraus operators} for $\eta$. The representation is not unique.

\begin{defi}[Adjoint]
\label{defi_adjoint}
The \emph{adjoint} of a linear map $\eta: M_n(\bC) \to M_n(\bC)$ with respect to the inner product $\langle X, Y\rangle = \Tr(X Y^\ast)$ is the unique linear map $\eta^\ast: M_n(\bC) \to M_n(\bC)$ satisfying
\[
\langle \eta^\ast(X), Y \rangle = \langle X, \eta(Y) \rangle \quad \text{for all } X, Y \in M_n(\bC).
\]
A linear map $\eta$ is called \emph{self-adjoint} (or \emph{Hermitian}) if $\eta^\ast = \eta$.
\end{defi}

If $\eta$ is a CP map with Kraus representation $\eta(X) = \sum_{i=1}^r A_i X A_i^\ast$, then $\eta^\ast(X) = \sum_{i=1}^r A_i^\ast X A_i$. Indeed, for all $X, Y$,
\begin{align*}
\langle X, \eta(Y) \rangle &= \sum_{i=1}^r \Tr(X (A_i Y A_i^\ast)^\ast) = \sum_{i=1}^r \Tr(X A_i Y^\ast A_i^\ast) = \sum_{i=1}^r \Tr(A_i^\ast X A_i \cdot Y^\ast) 
\\
&= \langle \textstyle\sum_i A_i^\ast X A_i,\, Y\rangle.
\end{align*}
In particular, $\eta^\ast$ is again completely positive.

\begin{lemma}
\label{lemma_hermitian_kraus}
A CP map $\eta$ is self-adjoint if and only if it admits a Kraus representation with Hermitian operators: $\eta(X) = \sum_{j} H_j X H_j$ with $H_j^\ast = H_j$ for every $j$.
\end{lemma}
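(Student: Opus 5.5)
The "if" direction is a direct computation. If $\eta(X)=\sum_j H_j X H_j$ with each $H_j^\ast=H_j$, then the adjoint formula derived just above the lemma gives $\eta^\ast(X)=\sum_j H_j^\ast X H_j=\sum_j H_j X H_j=\eta(X)$.

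For the "only if" direction I would work with the Choi matrix rather than with a fixed Kraus family, since Kraus representations are not unique. Fix a Kraus representation $\eta(X)=\sum_{i=1}^r A_i X A_i^\ast$. Since $\eta=\eta^\ast$ and $\eta^\ast(X)=\sum_i A_i^\ast X A_i$, we get
\[
\eta(X)=\tfrac12\sum_{i=1}^r \bigl(A_i X A_i^\ast + A_i^\ast X A_i\bigr).
\]
For each $i$ write $A_i=P_i+\sqrt{-1}\,Q_i$, where $P_i=(A_i+A_i^\ast)/2$ and $Q_i=(A_i-A_i^\ast)/(2\sqrt{-1})$ are Hermitian. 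Expand the two terms:
\begin{align*}
A_iXA_i^\ast &= P_iXP_i+Q_iXQ_i+\sqrt{-1}\,(Q_iXP_i-P_iXQ_i),\\
A_i^\ast XA_i &= P_iXP_i+Q_iXQ_i-\sqrt{-1}\,(Q_iXP_i-P_iXQ_i).
\end{align*}
The cross terms cancel in the average, so $\eta(X)=\sum_i (P_iXP_i+Q_iXQ_i)$. This is a Kraus representation with the $2r$ Hermitian operators $P_1,Q_1,\dots,P_r,Q_r$, which proves the claim.

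There is essentially no obstacle. The only care needed is to expand the cross terms correctly so that they genuinely cancel between the two terms; they do, because the coefficients $\pm\sqrt{-1}$ carry opposite signs. Complete positivity of the resulting representation is automatic from its Kraus form, so nothing more needs to be checked.
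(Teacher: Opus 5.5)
Your proof is correct and follows the paper's argument: the paper splits $A_k = H_k + iS_k$ and uses $\eta(X) - \eta^\ast(X) = 2i\sum_k (S_k X H_k - H_k X S_k) = 0$ to remove the cross terms, while you average $\eta$ with $\eta^\ast$, which is the same computation. Your opening remark about working with the Choi matrix is never used and can be deleted.
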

\begin{proof}
If $\eta(X) = \sum_j H_j X H_j$ with $H_j^\ast = H_j$, then $\eta^\ast(X) = \sum_j H_j^\ast X H_j = \sum_j H_j X H_j = \eta(X)$.

Conversely, suppose $\eta^\ast = \eta$ and write $\eta(X) = \sum_k A_k X A_k^\ast$. Decompose each Kraus operator as $A_k = H_k + i S_k$ where $H_k = \frac{1}{2}(A_k + A_k^\ast)$ and $S_k = \frac{1}{2i}(A_k - A_k^\ast)$ are Hermitian. Then
\[
A_k X A_k^\ast = H_k X H_k + S_k X S_k + i(S_k X H_k - H_k X S_k).
\]
Summing over $k$ and using the Kraus formula for the adjoint,
\[
\eta(X) - \eta^\ast(X) = 2i \sum_k (S_k X H_k - H_k X S_k) = 0,
\]
so $\eta(X) = \sum_k (H_k X H_k + S_k X S_k)$, which is a Hermitian Kraus representation.
\end{proof}

In particular, every CP map arising from a Hermitian matrix pencil $A = \sum A_i x_i$ with $A_i^\ast = A_i$ is self-adjoint.

\begin{defi} 
\label{defi_indecomposable}
A CP linear map $\eta:  M_n(\bC) \to M_n(\bC)$ is said to be 
\begin{enumerate}
\item \emph{rank non-decreasing (RND)} if $\rank(\eta(B)) \geq \rank(B)$ for all $B \succeq 0$;
\item \emph{indecomposable} if $\rank(\eta(B)) > \rank(B)$ for all $B \succeq 0$ with $1 \leq \rank(B) < n$;
\item \emph{strictly positive} if there exists $\alpha > 0$ such that $\eta(B) \succeq \alpha \Tr(B) I_n$ for all $B \succeq 0$. 
\end{enumerate}
\end{defi}

We have the strict inclusions:
\[
\text{strictly positive} \;\subsetneq\; \text{indecomposable} \;\subsetneq\; \text{rank non-decreasing}.  
\]

\begin{exa}[RND $\nRightarrow$ indecomposable]
\label{exa_RND_not_indecomp}
For $n = 2$, the identity map $\eta(B) = B$ is rank non-decreasing but not indecomposable, since $\rank(\eta(B)) = \rank(B)$ for every rank-$1$ positive semidefinite matrix $B$.
\end{exa}

\begin{exa}[Indecomposable $\nRightarrow$ strictly positive]
\label{exa_indecomposable_not_SP}
Let $n = 3$ and let $E_{ij}$ denote the matrix units. Define
\[
\Phi(B) = \sum_{(i, j) \ne (3,3)} E_{ij} B E_{ji} =  \sum_{i, j = 1}^3 E_{ij} B E_{ji} - E_{33} B E_{33}.
\]
This is completely positive, and can also be written as
\[
\Phi(B) = \Tr(B) I_3 - B_{33} E_{33} 
= \bmatr{ \Tr(B) & 0 & 0 \\ 0 & \Tr(B) & 0 \\ 0 & 0 & \Tr(B) - B_{33}}.
\]
We show that $\Phi$ is indecomposable. If $\rank(B) = 1$, write $B = v v^\ast$ with $\|v\| = 1$. Then 
\[
\Phi(v v^\ast) = \diag(1, 1, 1 - |v_3|^2), 
\]
so $\rank(\Phi(B)) \geq 2 > \rank(B)$. 

If $\rank(B) = 2$, then $B_{11} + B_{22} > 0$ (otherwise $B \succeq 0$ would force the first two rows and columns to vanish, giving $\rank(B) \leq 1$). Thus 
\[
\Phi(B) = \diag(\Tr(B), \Tr(B), B_{11} + B_{22}) > 0,
\]
so $\rank \Phi(B) = 3 > \rank(B)$. 
Therefore $\Phi$ is indecomposable. However, for $B = e_3 e_3^\ast$,
\[
\Phi(B) = \diag(1, 1, 0) \nsucceq \alpha I_3 \quad \text{for any } \alpha > 0,
\]
so $\Phi$ is not strictly positive.
\end{exa}


\begin{defi}
\label{defi_DS}
A CP linear map $\eta:  M_n(\bC) \to M_n(\bC)$ is said to be  \emph{doubly stochastic (DS)} if $\eta(I_n) = I_n$ and $\eta^\ast(I_n) = I_n$. The first condition means $\eta$ is \emph{unital}; the second means $\eta$ is \emph{trace-preserving}.
\end{defi}

\begin{defi}[Operator scaling]
\label{defi_operator_scaling}
Let $\eta$ be a CP map on $M_n(\bC)$ and let $c_1, c_2 \in M_n(\bC)$. The \emph{operator scaling} $S_{c_1, c_2}(\eta)$ is defined by
\[
S_{c_1, c_2}(\eta)(X) := c_1 \eta(c_2^\ast X c_2) c_1^\ast.
\]
We sometimes write $\eta_{c_1, c_2}$ for $S_{c_1, c_2}(\eta)$.
\end{defi}

\begin{defi}[Operator Sinkhorn Iteration (OSI)]
\label{defi_OSI}
Define the \emph{row normalization} and \emph{column normalization} of a CP map $\eta$ (assuming the relevant inverses exist) by
\begin{align*}
R(\eta) &:= S_{\eta(I)^{-1/2},\, I}(\eta), \\
C(\eta) &:= S_{I,\, \eta^\ast(I)^{-1/2}}(\eta).
\end{align*}
The \emph{Operator Sinkhorn Iteration} (OSI) is the iterative process $\ldots C R C R(\eta)$. 
\end{defi}

The distance of a CP map from the doubly stochastic class can be measured by
\begin{equation}
\label{equ_DS_distance}
\DS(\eta) := \Tr \bigl[(\eta(I) - I)^2\bigr] + \Tr \bigl[(\eta^\ast(I) - I)^2\bigr].
\end{equation}

\medskip
An important scaling invariant of CP maps is the capacity. 

 
\begin{defi}[Capacity]
\label{defi_capacity}
For a CP linear map $\eta: M_n(\bC) \to M_n(\bC)$, its \emph{capacity} is
\begin{equation}
\label{equ_defi_capacity}
\Capa(\eta) := \inf \{\det(\eta(B)) \mid B \succ 0, \; \det(B) = 1\}.
\end{equation}
\end{defi}


\begin{propo}[Scaling formula for capacity; Prop.~2.7 in \cite{ggow2020}]
\label{propo_scalings_capacity}
Let $\eta$ be a completely positive map and let $c_1$, $c_2$ be invertible matrices. Then 
\[
\Capa(\eta_{c_1,c_2}) = \det(c_1)^2 \det(c_2)^2 \Capa(\eta).
\]
\end{propo}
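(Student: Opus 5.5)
The plan is to compute the capacity of the scaled map directly from Definition~\ref{defi_capacity} by a change of variables in the infimum. By definition,
\[
\Capa(\eta_{c_1,c_2}) = \inf\{\det(c_1\,\eta(c_2^\ast B c_2)\,c_1^\ast) : B \succ 0,\ \det B = 1\}.
\]
First, multiplicativity of the determinant pulls out the left factor:
\[
\det(c_1\,\eta(c_2^\ast B c_2)\,c_1^\ast) = |\det c_1|^2 \det\eta(c_2^\ast B c_2).
\]
Second, since $c_2$ is invertible, the map $B \mapsto B' := c_2^\ast B c_2$ is a bijection of the positive definite cone onto itself. Under this map $\det B' = |\det c_2|^2 \det B$, so the constraint $\det B = 1$ becomes $\det B' = |\det c_2|^2$.

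Third, I will use the homogeneity of the objective to renormalize the constraint. For $\lambda > 0$, linearity of $\eta$ gives $\det\eta(\lambda B') = \lambda^n \det\eta(B')$. Write $B' = \lambda B''$ with $\det B'' = 1$. Then $\lambda^n = |\det c_2|^2$, so
\[
\det\eta(B') = |\det c_2|^2 \det\eta(B'').
\]
As $B$ ranges over $\{B \succ 0 : \det B = 1\}$, the matrix $B''$ ranges over the same set bijectively. Taking the infimum therefore gives
\[
\Capa(\eta_{c_1,c_2}) = |\det c_1|^2\, |\det c_2|^2\, \Capa(\eta).
\]
This is the formula in the statement, read with $\det(c)^2$ interpreted as $|\det c|^2$.

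The only point needing care is the squared determinant. For complex $c_i$, the quantity that actually appears is $\det(c_1)\overline{\det(c_1)} = |\det c_1|^2$. For real matrices, or for the positive definite scalings used later, this agrees with $\det(c_1)^2$, which is how the formula is to be read.

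Beyond that there is no real obstacle; the step requiring attention is checking that the substitution and rescaling give a genuine bijection of the constraint sets. This holds because $c_2$ is invertible and $\det(\lambda B'') = \lambda^n \det B''$ lets us fix $\lambda$ uniquely. The identity also holds in degenerate cases: if $\Capa(\eta) = 0$, both sides are $0$, since the infimum is scaled by a positive constant.
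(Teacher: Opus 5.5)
Your proof is correct. The map $B \mapsto |\det c_2|^{-2/n}\, c_2^\ast B c_2$ is a bijection of $\{B \succ 0 : \det B = 1\}$ onto itself, and along it the objective is multiplied by the fixed positive constant $|\det c_1|^2\,|\det c_2|^2$. The infimum therefore scales by that constant, including when $\Capa(\eta)=0$.

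The paper gives no proof of its own here; it only cites Garg--Gurvits--Oliveira--Wigderson, and this change of variables is the standard argument.

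Your remark about the constant is also right. As literally written, the statement fails for complex scalings: for $c_1 = e^{i\theta} I_n$ and $c_2 = I_n$ the map $\eta$ is unchanged, yet $\det(c_1)^2 = e^{2in\theta}$. The correct factor is $|\det c_1|^2 |\det c_2|^2$. This does not affect the paper, because its only use is in Lemma~\ref{lemma_exact_fixed_points_capacity}. There the scalings $c_1 = \eta(C)^{-1/2}$ and $c_2 = C^{1/2}$ are positive definite, so $\det(c_i)^2 = |\det c_i|^2$.
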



\begin{lemma}
\label{lemma_DS_unit_capacity}
If $\eta: M_n(\bC) \to M_n(\bC)$ is a doubly stochastic CP map, then $\Capa(\eta) = 1$.
\end{lemma}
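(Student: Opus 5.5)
We have to show two inequalities: $\Capa(\eta) \le 1$ and $\Capa(\eta) \ge 1$, where $\eta$ is doubly stochastic.

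\emph{Upper bound.} Test the infimum in Definition~\ref{defi_capacity} at $B = I_n$. This is admissible, since $\det I_n = 1$, and unitality gives $\eta(I_n) = I_n$. Hence $\Capa(\eta) \le \det(I_n) = 1$.

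\emph{Lower bound.} Fix $B \succ 0$ with $\det B = 1$; we must show $\det \eta(B) \ge 1$. Diagonalize $B = \sum_{j=1}^n \lambda_j v_j v_j^\ast$, with $\{v_j\}$ an orthonormal basis, $\lambda_j > 0$ and $\prod_j \lambda_j = 1$. Put $P_j := \eta(v_j v_j^\ast) \succeq 0$. Then $\eta(B) = \sum_j \lambda_j P_j$, and the two doubly stochastic conditions give
\[
\sum_j P_j = \eta(I_n) = I_n, \qquad \Tr P_j = \langle \eta(v_jv_j^\ast), I_n\rangle = \langle v_jv_j^\ast, \eta^\ast(I_n)\rangle = 1 .
\]
Now let $\{w_k\}$ be an orthonormal eigenbasis of $\eta(B)$. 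Since $\eta(B)$ is diagonal in this basis,
\[
\det \eta(B) = \prod_k \langle w_k, \eta(B) w_k\rangle = \prod_k \sum_j \lambda_j p_{kj}, \qquad p_{kj} := \langle w_k, P_j w_k\rangle \ge 0 .
\]
The matrix $(p_{kj})$ is doubly stochastic: $\sum_j p_{kj} = \langle w_k, I_n w_k\rangle = 1$ and $\sum_k p_{kj} = \Tr P_j = 1$. For each $k$, the weighted AM--GM inequality with weights $p_{kj}$ gives $\sum_j \lambda_j p_{kj} \ge \prod_j \lambda_j^{p_{kj}}$. Taking the product over $k$ yields
\[
\det \eta(B) \ge \prod_j \lambda_j^{\sum_k p_{kj}} = \prod_j \lambda_j = 1 .
\]
Hence $\Capa(\eta) \ge 1$, and combined with the upper bound, $\Capa(\eta) = 1$.

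The only step requiring care is this lower bound. The key choice is to evaluate the determinant in the eigenbasis of $\eta(B)$, since that is what produces a classical doubly stochastic matrix to which AM--GM applies. An alternative is Gurvits' original argument, but the direct computation above is self-contained.
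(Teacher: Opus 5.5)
Your proof is correct, and for the lower bound it takes a different route from the paper's; the upper bound, testing at $B = I_n$, is the same.

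The paper argues by convexity along a path. For fixed $B \succ 0$ with $\det B = 1$ it sets $h(t) = \log\det\eta(B^t)$ and $L = \log B$. It shows $h''(t) = \Tr[\Psi_t(L^2)] - \Tr[\Psi_t(L)^2] \ge 0$ by applying Kadison--Schwarz to the unital CP map $\Psi_t(X) = \eta(B^t)^{-1/2}\eta(B^{t/2}XB^{t/2})\eta(B^t)^{-1/2}$. Unitality then gives $h(0)=0$ and trace preservation gives $h'(0) = \Tr(\eta(L)) = \Tr(L) = 0$, so $h(1) \ge 0$. There the two DS conditions enter as zeroth- and first-order information at $I$. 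In your argument they become the row and column sums of the classical doubly stochastic matrix $(p_{kj})$, and concavity of $\log$ (weighted AM--GM) replaces the operator inequality supplied by Kadison--Schwarz.

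Your version is more elementary and self-contained. It needs no second-derivative computation and no check that $\eta(B^t) \succ 0$ along the path. It uses only positivity of $\eta$, with no appeal to Kadison--Schwarz. It also yields the standalone inequality $\det\bigl(\sum_j \lambda_j P_j\bigr) \ge \prod_j \lambda_j^{\Tr P_j}$ for positive semidefinite $P_j$ with $\sum_j P_j = I$ and $\lambda_j > 0$.

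The paper's version instead shows that $\log\det\eta(\cdot)$ is convex along the geodesic $t \mapsto B^t$ from $I$ to $B$ in $\mathcal{P}_n$. That fits the Riemannian picture of positive definite matrices used later for the geodesic reflection, and the Kempf--Ness viewpoint of Remark~\ref{rem:GIT}.
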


\begin{proof}
The upper bound $\Capa(\eta) \leq \det(\eta(I)) = 1$ is immediate from the definition.

For the lower bound, let $B \succ 0$ with $\det(B) = 1$ and set
$h(t) = \log\det(\eta(B^t))$. Write $L = \log B$, so that
$\frac{d}{dt}\eta(B^t) = \eta(B^t L)$ and
$\frac{d^2}{dt^2}\eta(B^t) = \eta(B^t L^2)$.
Define the unital CP map
\[
\Psi_t(X) = \eta(B^t)^{-1/2}\,\eta(B^{t/2}\, X\, B^{t/2})\,\eta(B^t)^{-1/2}.
\]
By Lemma~\ref{lemma_logdet_convexity} 
\[
h''(t) = \Tr[\Psi_t(L^2)] - \Tr[\Psi_t(L)^2] \geq 0.
\]

At $t = 0$: unitality gives $h(0) = \log\det(\eta(I)) = 0$,
and trace-preservation gives
\[
h'(0) = \Tr\bigl[\eta(I)^{-1}\,\eta(L)\bigr] = \Tr(\eta(L)) = \Tr(L) = \log\det(B) = 0.
\]
A convex function with $h(0) = 0$ and $h'(0) = 0$ satisfies
$h(t) \geq 0$ for all $t \geq 0$. In particular,
$\log\det(\eta(B)) = h(1) \geq 0$, i.e., $\det(\eta(B)) \geq 1$.
\end{proof}


\begin{lemma}[Critical points are capacity minimizers]
\label{lemma_exact_fixed_points_capacity}
Let $\eta: M_n(\bC) \to M_n(\bC)$ be a completely positive map and let $C \succ 0$ satisfy
\begin{equation}
\label{equ_capacity_FOC}
\eta^\ast\bigl(\eta(C)^{-1}\bigr) = C^{-1}.
\end{equation}
Then $\Capa(\eta) = \det(\eta(C))/\det(C)$.
\end{lemma}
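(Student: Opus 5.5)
The plan is to reduce to the doubly stochastic case by scaling, and then apply Lemma~\ref{lemma_DS_unit_capacity} together with the scaling formula, Proposition~\ref{propo_scalings_capacity}.

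Set $D := \eta(C) \succ 0$. Note that $D$ is invertible: condition~\eqref{equ_capacity_FOC} presupposes $\eta(C)^{-1}$ exists. Consider the scaled map
\[
\tilde\eta := \eta_{c_1,c_2}, \qquad c_1 := D^{-1/2},\quad c_2 := C^{1/2},
\]
so that $\tilde\eta(X) = D^{-1/2}\,\eta(C^{1/2} X C^{1/2})\,D^{-1/2}$. This map is CP, and we check that it is doubly stochastic.

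\textbf{Unital.} We have $\tilde\eta(I) = D^{-1/2}\eta(C)D^{-1/2} = I$.

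\textbf{Trace-preserving.} The adjoint is $\tilde\eta^\ast(Y) = C^{1/2}\,\eta^\ast(D^{-1/2} Y D^{-1/2})\,C^{1/2}$. This follows from the Kraus formula for adjoints, or directly from the definition via cyclicity of the trace. Hence
\[
\tilde\eta^\ast(I) = C^{1/2}\eta^\ast(D^{-1})C^{1/2} = C^{1/2}C^{-1}C^{1/2} = I,
\]
where the middle equality uses exactly the hypothesis~\eqref{equ_capacity_FOC}.

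Since $\tilde\eta$ is doubly stochastic, Lemma~\ref{lemma_DS_unit_capacity} gives $\Capa(\tilde\eta) = 1$. Proposition~\ref{propo_scalings_capacity} then gives
\[
1 = \Capa(\tilde\eta) = \det(D^{-1/2})^2\det(C^{1/2})^2\,\Capa(\eta) = \frac{\det C}{\det \eta(C)}\,\Capa(\eta),
\]
and therefore $\Capa(\eta) = \det(\eta(C))/\det(C)$.

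The only step needing care is the convention in Definition~\ref{defi_operator_scaling}: the scaling is $c_1\eta(c_2^\ast X c_2)c_1^\ast$. We must check that the chosen $c_1, c_2$ are Hermitian, so that $c_2^\ast X c_2 = C^{1/2}XC^{1/2}$. We must also check that the determinant factors in Proposition~\ref{propo_scalings_capacity} are read as $|\det|^2$, which here equal $\det(D)^{-1}$ and $\det(C)$; both are positive, so no sign issues arise.

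As a sanity check independent of the scaling proposition, one can argue directly:
\[
\det\eta(B) = \det D \cdot \det \tilde\eta(C^{-1/2}BC^{-1/2}) \ge \det D \cdot \det(C^{-1/2}BC^{-1/2}) = \frac{\det D}{\det C}
\]
for $\det B = 1$. Here the inequality is Lemma~\ref{lemma_DS_unit_capacity}'s lower bound applied after normalizing the determinant, with equality at $B = C/\det(C)^{1/n}$. I expect no real obstacle: the main content is already in Lemma~\ref{lemma_DS_unit_capacity}.
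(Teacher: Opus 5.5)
Your proposal is correct and matches the paper's proof essentially verbatim. Both form the scaling $S_{\eta(C)^{-1/2},\,C^{1/2}}(\eta)$, check unitality directly and trace preservation via the hypothesis~\eqref{equ_capacity_FOC}, then combine Lemma~\ref{lemma_DS_unit_capacity} with the scaling formula of Proposition~\ref{propo_scalings_capacity}. Your direct ``sanity check'' inequality is also valid and avoids the scaling proposition, but it is not needed.
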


\begin{proof}
The scaling $Z := S_{\eta(C)^{-1/2},\, C^{1/2}}(\eta)$ satisfies 
\[
Z(I) = \eta(C)^{-1/2}\,\eta(C)\,\eta(C)^{-1/2} = I.
\]
 For the adjoint, $Z^\ast = S_{C^{1/2},\, \eta(C)^{-1/2}}(\eta^\ast)$, so
\[
Z^\ast(I) = C^{1/2}\,\eta^\ast\bigl(\eta(C)^{-1}\bigr)\,C^{1/2} = C^{1/2}\,C^{-1}\,C^{1/2} = I,
\]
where we used~\eqref{equ_capacity_FOC}. Hence $Z$ is doubly stochastic, so by Lemma \ref{lemma_DS_unit_capacity} $\Capa(Z) = 1$. By the scaling formula (Proposition~\ref{propo_scalings_capacity}),
\[
\Capa(\eta) = \frac{\Capa(Z)}{\det(\eta(C)^{-1}) \det(C)} = \frac{\det(\eta(C))}{\det(C)}.
\]
\end{proof}

\begin{remark}
A quantitative generalization to $\varepsilon$-approximate fixed points appears as Lemma~3.9 in~\cite{ggow2020}.
\end{remark}


\begin{defi}[DS-scalability]
\label{defi_DS_scalable}
A CP map $\eta:M_n(\mathbb{C})\to M_n(\mathbb{C})$ is
\emph{DS-scalable} if there exist invertible matrices $c_1, c_2\in GL_n(\mathbb{C})$
such that the scaled map $S_{c_1,\,c_2}(\eta)$ is doubly stochastic.
\end{defi}

\begin{defi}[Symmetric DS-scalability]
\label{defi_symmetric_DS}
A Hermitian CP map $\eta:M_n(\mathbb C)\to M_n(\mathbb C)$ is
\emph{symmetrically DS-scalable} if there exists a positive definite matrix
$C \succ 0$ such that the symmetrically scaled map $S_{C^{1/2},\, C^{1/2}}(\eta)$
is doubly stochastic.
\end{defi}

Symmetric DS-scalability is DS-scalability with the additional constraint
$c_1 = c_2 = C^{1/2} \succ 0$. Since $\eta$ is Hermitian, the scaled map
$\tilde\eta := S_{C^{1/2}, C^{1/2}}(\eta)$ is also Hermitian, so the two DS
conditions $\tilde\eta(I) = I$ and $\tilde\eta^\ast(I) = I$ reduce to a single
equation. Explicitly, $\tilde\eta(I) = C^{1/2}\eta(C)C^{1/2}$, so
$\tilde\eta(I) = I$ is equivalent to
\begin{equation}
\label{equ_symmetric_scalability}
\eta(C) = C^{-1}.
\end{equation}

We will also need the following result, which shows that 
unit-eigenvalue eigenstates of a DS map lie in its 
multiplicative domain. It will play a key role in the 
Lyapunov--Schmidt analysis of~\S\ref{sec:analytic_engine}.

\begin{lemma}
\label{lemma_unit_eigenstates}
Let $\Phi: M_n(\bC) \to M_n(\bC)$ be doubly stochastic CP map. Assume that $A$ is Hermitian and $\Phi(A) = \lambda A$ with $|\lambda| = 1$. Then the Kadison--Schwarz inequality is satisfied with equality,  i.e. $\Phi(A^2) =  \Phi(A)^2$ and $\Phi$ restricts to a 
$\ast$-homomorphism on the $C^\ast$-algebra $C^\ast(A)$ generated by $A$ 
and $I$.
\end{lemma}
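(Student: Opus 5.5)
The plan is to combine the Kadison--Schwarz inequality for unital CP maps with trace preservation, which forces equality. Then Choi's multiplicative domain theorem gives the $\ast$-homomorphism property.

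\emph{Step 1: pin down $\lambda$ and the eigen-relation for $A^2$.} Since $\Phi$ is CP, it is Hermiticity-preserving, so $\Phi(A)$ is Hermitian. Hence $\lambda A = (\lambda A)^*= \bar\lambda A$. If $A \neq 0$ (the case $A=0$ is trivial), this forces $\lambda \in \mathbb{R}$, i.e.\ $\lambda = \pm 1$. In either case $\Phi(A)^2 = \lambda^2 A^2 = A^2$.

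\emph{Step 2: Kadison--Schwarz plus trace.} A unital CP map is in particular unital $2$-positive, so the Kadison--Schwarz inequality $\Phi(X^*X) \succeq \Phi(X)^*\Phi(X)$ holds for all $X$. Applied to $X = A$ it gives
\[
D := \Phi(A^2) - \Phi(A)^2 = \Phi(A^2) - A^2 \succeq 0 .
\]
Because $\Phi$ is trace-preserving ($\Phi^*(I) = I$), we have $\Tr \Phi(A^2) = \Tr(A^2)$, so $\Tr D = 0$. A positive semidefinite matrix with zero trace vanishes. Therefore $\Phi(A^2) = \Phi(A)^2$, which is the asserted equality case.

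\emph{Step 3: multiplicative domain.} By Choi's theorem on the multiplicative domain of a unital CP (indeed unital $2$-positive) map, the set
\[
\mathcal{M}_\Phi=\{X : \Phi(X^*X)=\Phi(X)^*\Phi(X),\ \Phi(XX^*)=\Phi(X)\Phi(X)^*\}
\]
is a $C^*$-subalgebra. Moreover, $\Phi(XY)=\Phi(X)\Phi(Y)$ and $\Phi(YX)=\Phi(Y)\Phi(X)$ for all $X\in\mathcal{M}_\Phi$ and all $Y$. Since $A$ is Hermitian, both defining equalities reduce to the one proved in Step 2, so $A \in \mathcal{M}_\Phi$. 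The identity $I$ lies in $\mathcal{M}_\Phi$ by unitality. Hence $C^*(A,I)\subseteq \mathcal{M}_\Phi$, and $\Phi$ restricted to $C^*(A,I)$ is multiplicative and $\ast$-preserving, i.e.\ a $\ast$-homomorphism. Concretely, this also gives $\Phi(p(A)) = p(\lambda A)$ for polynomials $p$.

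The only real subtlety is Step 1: the eigenvalue must be real so that $\Phi(A)^2 = A^2$ and the trace comparison closes. This follows from Hermiticity preservation of CP maps. Everything else is the standard Kadison--Schwarz/Choi machinery. If one prefers to avoid citing Choi's theorem, multiplicativity on $C^*(A,I)$ can instead be proved by a Stinespring argument. Write $\Phi(X)=V^*\pi(X)V$ with $V^*V=I$. Equality in Step 2 means $(I-VV^*)\pi(A)V=0$, so $\pi(A)$ leaves $\range V$ invariant, and then so does every polynomial in $A$.
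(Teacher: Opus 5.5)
Your proposal is correct and follows the paper's proof essentially step for step. Both arguments get $\lambda=\pm1$ from adjoint preservation, then use Kadison--Schwarz together with trace preservation to force $\Phi(A^2)=A^2=\Phi(A)^2$, and finish with Choi's multiplicative domain theorem. Your Stinespring alternative, via $(I-VV^*)\pi(A)V=0$, is the same argument the paper gives in its appendix proof of Choi's theorem.
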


\begin{proof}
Since $\Phi$ preserves adjoints, $\Phi(A) = \lambda A$ with $A = A^*$ 
forces $\lambda \in \R$; combined with $|\lambda| = 1$ this gives 
$\lambda = \pm 1$. The Kadison--Schwarz inequality 
(Lemma~\ref{lem:kadison_schwarz}) yields
\[
\Phi(A^2) \geq \Phi(A)^2 = \lambda^2 A^2 = A^2.
\]
Since $\Phi$ is doubly stochastic, it is trace-preserving, so 
$\tr(\Phi(A^2) - A^2) = 0$. A positive semidefinite matrix with 
trace zero vanishes, hence $\Phi(A^2) = A^2 = \Phi(A)^2$. 
By Theorem~\ref{theo_choi}, $\Phi$ restricts to a 
$*$-homomorphism on $C^*(A)$.
\end{proof}


\subsection{Equivalences between pencil structure and map properties} 
\label{sec:pencil_map_equivalences}

The properties of a matrix pencil $A = \sum_{i=1}^d A_i x_i$ 
are faithfully reflected in the properties of the associated CP 
map $\eta_A(B) = \sum_{i=1}^d A_i B A_i^*$.  
The following three equivalences form the ``pencil--map dictionary'' 
that underlies the rest of the paper.

\begin{theo}[Pencil--Map Dictionary]
\label{theo_pencil_map_dictionary}
Let $A = \sum_{i=1}^d A_i x_i$ be an $n \times n$ matrix pencil and 
$\eta_A(B) = \sum_{i=1}^d A_i B A_i^*$ the associated CP map.  Then:
\begin{enumerate}
\item[\textup{(i)}] $A$ is full $\;\Longleftrightarrow\;$ 
      $\eta_A$ is rank non-decreasing $\;\Longleftrightarrow\;$ $\Capa(\eta_A)>0$.
\item[\textup{(ii)}] $A$ is unsplittable $\;\Longleftrightarrow\;$ 
      $\eta_A$ is indecomposable.
\item[\textup{(iii)}] $A$ is LR-semisimple $\;\Longleftrightarrow\;$ 
      there exist $c_1,c_2\in GL_n(\C)$ and a diagonal subalgebra $\DD$ 
      such that $S_{c_1,c_2}(\eta_A)$ is $\DD$-CI.  
\end{enumerate}
\end{theo}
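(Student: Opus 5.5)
Throughout, I read ``$\DD$-CI'' as follows: $\DD=\bigoplus_j M_{n_j}(\C)$ is a block-diagonal subalgebra, and the scaled map sends each block $P_j M_n(\C) P_j$ into itself, with indecomposable restriction to that block. If the paper's definition differs in detail, part (iii) will need the corresponding adjustment.

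The whole plan rests on one elementary observation. For $B\succeq 0$,
\[
\range \eta_A(B)=\sum_i A_i(\range B),
\]
because $\sum_i A_iBA_i^*\succeq 0$ and its kernel consists of the vectors $v$ with $B^{1/2}A_i^*v=0$ for all $i$. Consequently $\rank\eta_A(B)$ depends only on the subspace $\RR=\range B$, and we may replace $B$ by the orthogonal projection $P_\RR$. Writing $\LL(\RR):=\sum_i A_i\RR$, every rank condition on $\eta_A$ becomes a dimension condition on the pair $(\RR,\LL(\RR))$.

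\textbf{Part (i).} $\eta_A$ fails to be rank non-decreasing iff some $\RR$ has $\dim\LL(\RR)<\dim\RR$. Any $\LL$ with $A_i\RR\subseteq\LL$ for all $i$ contains $\LL(\RR)$. So a violating pair $(\RR,\LL)$ exists iff $\RR$ itself witnesses the failure, which is exactly non-fullness. The equivalence of RND with $\Capa(\eta_A)>0$ is Gurvits' theorem; I will cite it, for instance via \cite{ggow2020}, rather than reprove it.

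\textbf{Part (ii).} $\eta_A$ is decomposable iff some $\RR$ with $1\le\dim\RR\le n-1$ has $\dim\LL(\RR)\le\dim\RR$. If such an $\RR$ exists, enlarge $\LL(\RR)$ to a subspace $\LL$ with $\dim\LL=\dim\RR$; then $(\RR,\LL)$ is a splitting pair. Conversely, a splitting pair $(\RR,\LL)$ gives $\dim\LL(\RR)\le\dim\LL=\dim\RR$, so $P_\RR$ violates indecomposability. The nonzero requirement in ``unsplittable'' matches the degenerate case: for $n\ge2$, the zero map violates indecomposability automatically, and for $n=1$ the convention that the map is nonzero has to be imposed on both sides.

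\textbf{Part (iii).} I will first record how LR-equivalence acts on the map. If $B_i=LA_iR$, then
\[
\eta_B(X)=L\Bigl(\sum_i A_i(RXR^*)A_i^*\Bigr)L^*=S_{L,R^*}(\eta_A)(X),
\]
and conversely $S_{c_1,c_2}(\eta_A)$ is the map of the pencil $c_1A_ic_2^*$.

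\emph{($\Rightarrow$)} If each $B_i$ is block diagonal, then $B_iXB_i^*$ sends the $(j,k)$ block of $X$ to the $(j,k)$ block. Hence $\eta_B$ preserves $\DD$, and its restriction to block $j$ is $\eta_{B^{(j)}}$, which is indecomposable by (ii).

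\emph{($\Leftarrow$)} Suppose $S_{c_1,c_2}(\eta_A)$, with Kraus operators $K_i=c_1A_ic_2^*$, maps $P_jM_nP_j$ into itself. Then $\range\bigl(\sum_i K_iP_jK_i^*\bigr)\subseteq\range P_j$, so $K_i(\range P_j)\subseteq\range P_j$ for every $i$ and $j$. Since the ranges of the $P_j$ give a direct-sum decomposition of $\C^n$, each $K_i$ is block diagonal. The restriction to block $j$ is the map of the sub-pencil $K^{(j)}$, which is indecomposable, so $K^{(j)}$ is unsplittable by (ii). Taking $L=c_1$ and $R=c_2^*$ gives LR-semisimplicity.

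\textbf{Main obstacle.} The only genuinely deep input is Gurvits' capacity theorem in (i), and I will cite it. The step needing most care is (iii)($\Leftarrow$): one must check that invariance of the blocks under $\eta$ really forces every Kraus operator, not merely their sum, to be block diagonal. The range identity $\range\eta_A(B)=\sum_i A_i(\range B)$ handles this, because it is a statement about each $K_i$ individually.
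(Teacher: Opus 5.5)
Your proposal is correct and follows essentially the paper's route: the range identity $\range\eta_A(B)=\sum_i A_i(\range B)$ drives (i) and (ii) as in the paper's appendix and its proof of part (ii). You enlarge $\LL$ where the paper shrinks $\RR$, which absorbs the paper's separate $t=0$ case. Gurvits is cited for the capacity equivalence, and in (iii) positivity forces every Kraus operator of the scaled map to be block diagonal, which is exactly the content of the paper's Kraus characterization of $\DD$-CI maps.

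You read $\DD$-CI as block invariance, whereas the paper defines it as bimodularity over $\DD=\operatorname{span}\{P_j\}$ plus indecomposable corners. The difference is harmless: bimodularity gives block invariance, and block-diagonal Kraus operators give bimodularity back. Your tracking of $c_2^*$ is more careful than the paper's, and so is your nonzero convention on $1\times 1$ blocks. That convention is needed not only in (ii) but also in (iii)($\Leftarrow$), e.g.\ for $A=\operatorname{diag}(x_1,0)$, since indecomposability is vacuous on $M_1(\C)$.
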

\begin{remark}
If $c_1 = c_2$, $\DD$-CI-scalability characterizes C-semisimplicity.
\end{remark}

\noindent
On the pencil side, the hierarchy is
\[
  \text{unsplittable}  \Longrightarrow \text{C-semisimple}  \Longrightarrow \text{LR-semisimple} 
  \Longrightarrow \text{full},
\]
and correspondingly, on the map side:
\begin{align*}
\text{indecomposable} &
 \Longrightarrow \text{$\DD$-CI--symmetrically scalable}
  \Longrightarrow \text{$\DD$-CI-- scalable} 
  \\
 & \Longrightarrow \text{rank non-decreasing}
  \Longleftrightarrow  \Capa > 0.
\end{align*}
All implications above are strict.

\medskip
Before proving the theorem, we introduce the $\DD$-CI property 
that appears in part~(iii).

\begin{defi}[$\mathcal{D}$-bimodular map]
\label{defi:D-bimodular}
Let $P_1,\dotsc,P_r$ be pairwise orthogonal projections in $M_n(\C)$ 
with $\sum_{j=1}^r P_j = I$, and set 
$\DD := \operatorname{span}\{P_1,\dotsc,P_r\}$.  
A CP map $\eta\colon M_n(\mathbb{C})\to M_n(\mathbb{C})$ is
\emph{$\mathcal{D}$-bimodular} if
\[
  \eta(D_1 X D_2) = D_1\,\eta(X)\,D_2
  \quad\text{for all } D_1,D_2\in\mathcal{D},\ X\in M_n(\mathbb{C}).
\]
\end{defi}

\begin{defi}[$\DD$-CI map]
\label{defi:DCI}
Let $\mathcal{D} = \operatorname{span}\{P_1,\dotsc,P_r\}$ as above.
A completely positive map $\eta\colon M_n(\C)\to M_n(\C)$ is 
\emph{$\DD$-corner-indecomposable} (abbrev.\ \emph{$\DD$-CI}) if:
\begin{enumerate}
\item[\textup{(a)}] \textup{(\textbf{$\DD$-bimodule})}\;
$\eta$ is $\DD$-bimodular, 
\item[\textup{(b)}] \textup{(\textbf{Corner-indecomposable})}\;
  For each~$j$, the CP map 
  $\eta_j := P_j\,\eta(\,\cdot\,)\,P_j$ on $P_j M_n(\C) P_j$ 
  is indecomposable.
\end{enumerate}
\end{defi}

\begin{remark}
The identity map is $\DD$-bimodular for every diagonal 
subalgebra~$\DD$, but it is $\DD$-CI only when each $P_j$ has rank 
one: the corner map $\eta_j(X) = P_j X P_j$ on 
$P_j M_n(\C) P_j \cong M_{n_j}(\C)$ is the identity on~$M_{n_j}(\C)$, 
which is indecomposable if and only if $n_j = 1$.
\end{remark}

We also record a Kraus characterization of $\DD$-CI maps that 
will be used in the proof of part~(iii).

\begin{lemma}[Kraus characterization of $\DD$-CI maps]
\label{lem:kraus_DCI}
A completely positive map $\eta\colon M_n(\C)\to M_n(\C)$ is $\DD$-CI
if and only if it admits a Kraus representation
$\eta(X) = \sum_{i=1}^m A_i X A_i^*$ with each 
$A_i\in\DD' = \bigoplus_{j=1}^r P_j M_n(\C) P_j$, and for each~$j$ 
the corner map
\[
\eta_j(X) = \sum_{i=1}^m (P_j A_i P_j)\,X\,(P_j A_i P_j)^*
\quad\text{on } P_j M_n(\C) P_j
\]
is indecomposable.
\end{lemma}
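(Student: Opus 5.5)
The plan is to prove the two directions separately. In both, the key tool is the correspondence between $\DD$-bimodularity of $\eta$ and membership of the Kraus operators in the commutant $\DD' = \bigoplus_j P_j M_n(\C) P_j$. Throughout, $\DD = \operatorname{span}\{P_1,\dots,P_r\}$ with the $P_j$ pairwise orthogonal projections summing to $I$.

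\emph{($\Leftarrow$)} Suppose $\eta(X) = \sum_i A_i X A_i^*$ with each $A_i \in \DD'$.
\begin{enumerate}
\item For $D_1, D_2 \in \DD$ we have $A_i D_1 = D_1 A_i$ and $D_2 A_i^* = A_i^* D_2$, since $\DD'$ is a $*$-algebra commuting with $\DD$.
\item Hence $\eta(D_1 X D_2) = \sum_i D_1 A_i X A_i^* D_2 = D_1 \eta(X) D_2$, which is condition (a).
\item Because $P_j A_i = A_i P_j$, the corner map satisfies $P_j \eta(X) P_j = \sum_i (P_j A_i P_j) X (P_j A_i P_j)^*$ for $X \in P_j M_n(\C) P_j$. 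The assumed indecomposability of this map is exactly condition (b).
\end{enumerate}

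\emph{($\Rightarrow$)} Suppose $\eta$ is $\DD$-bimodular, and take any Kraus representation, for instance a minimal one obtained from the Choi matrix with linearly independent $A_i$. The aim is to show every $A_i$ commutes with every $P_j$. I will use two routes.
\begin{enumerate}
\item \textbf{Choi-matrix route.} Bimodularity with $D_1 = P_k$ and $D_2 = I$ gives $\eta(P_k X) = P_k \eta(X)$. In terms of the Choi matrix $J = \sum_i \operatorname{vec}(A_i)\operatorname{vec}(A_i)^*$, this identity translates into $(P_k \otimes I)J = (I \otimes P_k^{T})J$ under the appropriate vectorization convention.
\item Taking adjoints, $J$ commutes with $Q_k := P_k \otimes I - I \otimes \overline{P_k}$, or more precisely $\operatorname{Range} J$ lies in the kernel of $Q_k$. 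Since every $\operatorname{vec}(A_i)$ lies in $\operatorname{Range} J$, each $\operatorname{vec}(A_i)$ is annihilated by $Q_k$, that is, $P_k A_i = A_i P_k$.
\item \textbf{Direct route (backup).} Setting $X = P_l Y P_m$, bimodularity gives $P_k \eta(P_l Y P_m) P_{k'} = 0$ unless $k = l$ and $k' = m$. Taking $m = l$ and $Y = vv^*$ gives $\sum_i \|P_k A_i P_l v\|^2 = \operatorname{Tr} P_k \eta(P_l vv^* P_l) P_k = 0$ for $k \neq l$. Therefore $P_k A_i P_l = 0$, so every $A_i$ is block-diagonal, i.e.\ $A_i \in \DD'$.
\end{enumerate}
The direct route is the more elementary one, works for any Kraus representation, and avoids vectorization conventions. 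The corner-map formula then follows exactly as in step 3 of the $(\Leftarrow)$ direction, and condition (b) turns into the stated indecomposability.

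I expect the main obstacle to be keeping the positivity argument clean, namely checking that the diagonal-block trace computation really forces $P_k A_i P_l = 0$ for each individual $i$. It does, because the sum consists of nonnegative terms, and this requires only half of bimodularity, applied with $D_1 = P_k$ and $D_2 = P_k$.
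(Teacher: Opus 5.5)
Your proposal is correct and essentially matches the paper's proof. The $(\Leftarrow)$ direction and the observation that condition (b) is just the corner-map formula are the same as in the paper. Your ``direct route'' for $(\Rightarrow)$ is exactly the paper's argument: bimodularity kills the off-diagonal corners $P_k\eta(P_lYP_l)P_k$ for $k\neq l$, so $P_kA_iP_l=0$ for every Kraus operator. You also spell out the positivity step that the paper compresses into ``which forces.'' The Choi-matrix route is a valid alternative that uses only the one-sided identity $\eta(P_kX)=P_k\eta(X)$, but it is not needed.
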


\begin{proof}
The corner-indecomposability condition is a direct restatement of 
part~(b) of Definition~\ref{defi:DCI}, so it suffices to show that 
$\eta$ is $\DD$-bimodular if and only if it admits Kraus operators 
in~$\DD'$.

$(\Leftarrow)$\; If each $A_i$ commutes with every element of~$\DD$, then 
$\eta(D_1 X D_2) = \sum_i A_i D_1 X D_2 A_i^* = D_1\,\eta(X)\,D_2$.

$(\Rightarrow)$\; Start from an arbitrary Kraus representation 
$\eta(X) = \sum_i B_i X B_i^*$.  $\DD$-bimodularity gives, for every 
$\ell,j,k,m$,
\begin{equation}
\label{eq:bimod_blocks}
P_\ell\,\eta(P_j X P_k)\,P_m 
  = \delta_{\ell j}\,\delta_{mk}\;\eta(P_j X P_k).
\end{equation}
The left-hand side equals 
$\sum_i (P_\ell B_i P_j)\,X\,(P_k B_i^* P_m)$.  Taking $\ell\ne j$:  
for all~$X$ and all $k,m$ we have 
$\sum_i (P_\ell B_i P_j) X (P_k B_i^* P_m) = 0$, which forces 
$P_\ell B_i P_j = 0$ for every~$i$.  Therefore 
$B_i = \sum_j P_j B_i P_j$ for each~$i$, i.e., each $B_i$ already 
lies in~$\DD'$.

Summing~\eqref{eq:bimod_blocks} over $j,k$ with $\ell = j$, $m = k$:
\[
\eta(X) 
  = \sum_{j,k} \eta(P_j X P_k) 
  = \sum_{j,k}\sum_i (P_j B_i P_j)\,X\,(P_k B_i^* P_k) 
  = \sum_i B_i\,X\,B_i^*.
  \qedhere
\]
\end{proof}

In other words, $\eta$ is $\DD$-CI precisely when its Kraus operators 
respect the block structure imposed by~$\DD$, and each diagonal block 
acts indecomposably.

\medskip
We now prove the three parts of Theorem~\ref{theo_pencil_map_dictionary}.

\begin{proof}[Proof of part \textup{(i)}]
The equivalence full $\Leftrightarrow$ rank non-decreasing is 
Proposition~2.10 in~\cite{msy2023}, which refers to 
Theorem~1.4 of~\cite{ggow2020} (equivalence (5)~$\Leftrightarrow$~(7), 
stated there without proof).  We provide the elementary argument in 
Appendix~\ref{proof_full_eq_RND}; here we sketch the key ideas.

Fullness means no subspace pair $(\RR,\LL)$ has positive excess 
$\dim\RR - \dim\LL > 0$ subject to $A_i(\RR)\subseteq\LL$, 
while rank non-decreasing means 
$\dim\im(\eta_A(B)) \ge \dim\im(B)$ for all $B\succeq 0$.

For $(\Leftarrow)$: if $A$ is not full, there exists a pair 
$(\RR,\LL)$ with $A_i(\RR)\subseteq\LL$ and $\dim\RR > \dim\LL$.  
Taking $B = P_\RR$ gives 
$\rank(\eta_A(B)) \le \dim\LL < \dim\RR = \rank(B)$.

For $(\Rightarrow)$: if $\eta_A$ is not rank non-decreasing, 
some $B\succeq 0$ witnesses $\rank(\eta_A(B)) < \rank(B)$.  Setting 
$\RR = \im(B)$ and $\LL = \im(\eta_A(B))$, the same argument as 
in Step~1 of the proof of part~(ii) below shows 
$A_i(\RR)\subseteq\LL$ with $\dim\LL < \dim\RR$, contradicting 
fullness.

The equivalence rank non-decreasing $\Leftrightarrow$ $\Capa(\eta_A) > 0$ 
is due to Gurvits:

\begin{propo}[Lemma 4.5 in \cite{gurvits2004}] 
\label{propo_RN_map_capacity}
A completely positive map $\eta$ is rank non-decreasing if and only 
if $\Capa(\eta) > 0$. 
\end{propo}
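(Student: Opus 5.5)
We prove the two directions of Proposition~\ref{propo_RN_map_capacity} separately. The direction $\Capa(\eta)>0 \Rightarrow$ rank non-decreasing is the elementary one, and we take it first.

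\textbf{Positive capacity implies rank non-decreasing.} Suppose $\eta$ is not rank non-decreasing. Then there is $B\succeq 0$ with $\rank\eta(B)<\rank B=k$. We may take $B=P$ to be the orthogonal projection onto $\im B$, because $\im\eta(B)$ depends only on $\im B$. Set $\ell=\rank\eta(P)<k$ and let $Q$ be the projection onto $\im\eta(P)$. For $\varepsilon>0$ consider
\[
B_\varepsilon=\varepsilon^{-(n-k)/k}P+\varepsilon(I-P),
\]
which satisfies $\det B_\varepsilon=1$. Expanding,
\[
\eta(B_\varepsilon)=\varepsilon^{-(n-k)/k}\eta(P)+\varepsilon\,\eta(I-P).
\]
In a basis adapted to $Q$, the $Q$-block of $\eta(B_\varepsilon)$ has size $O(\varepsilon^{-(n-k)/k})$, and the complementary block is $O(\varepsilon)$. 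The off-diagonal blocks come only from the term $\varepsilon\,\eta(I-P)$, so they are $O(\varepsilon)$. By Fischer's inequality,
\[
\det\eta(B_\varepsilon)\le \det(\text{$Q$-block})\cdot\det(\text{complement block})=O\bigl(\varepsilon^{-\ell(n-k)/k}\cdot\varepsilon^{n-\ell}\bigr).
\]
The exponent of $\varepsilon$ is
\[
n-\ell-\frac{\ell(n-k)}{k}=\frac{n(k-\ell)}{k}>0,
\]
so $\det\eta(B_\varepsilon)\to 0$ as $\varepsilon\to 0$. Hence $\Capa(\eta)=0$.

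\textbf{Rank non-decreasing implies positive capacity.} This is the main obstacle; we follow Gurvits' operator Sinkhorn argument.

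First we normalize. Rank non-decreasing gives $\eta(I)\succ 0$ and $\eta^*(I)\succ 0$; for the latter, if $\eta^*(I)v=0$ then $\eta(vv^*)=0$, which violates the rank condition. So the OSI of Definition~\ref{defi_OSI} is well defined. Every scaling with invertible matrices preserves the RND property, and by Proposition~\ref{propo_scalings_capacity} it multiplies capacity by a nonzero factor. It therefore suffices to show that some iterate $\eta_k$ has positive capacity.

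Next we track a potential. After a row normalization, $\eta_k(I)=I$, so AM--GM gives
\[
\det\bigl(\eta_k^*(I)\bigr)\le\Bigl(\tfrac{\Tr\eta_k^*(I)}{n}\Bigr)^n=\Bigl(\tfrac{\Tr\eta_k(I)}{n}\Bigr)^n=1.
\]
The column normalization then multiplies the relevant determinant by $\det\eta_k^*(I)^{-1}\ge 1$. Moreover, if $\DS(\eta_k)\ge\delta$, this factor is at least $1+c(\delta)$ for some $c(\delta)>0$. The same holds symmetrically for row normalizations.

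We also need an upper bound on this potential. We would use the statement that for an RND map, $\det\eta(B)$ is bounded below by a positive constant times $\det B$ on the relevant compact family. Gurvits proves this for RND maps by reducing, via a generic positive matrix, to a mixed-discriminant bound that is a polynomial non-vanishing statement, at the cost of a rational-coefficient assumption. We expect this lower bound to be the genuinely hard step. The plan is to quote Gurvits' Lemma~4.5 for it, or else to argue that the infimum defining capacity is attained in the RND case, by compactness after quotienting out scaling, using rank non-decreasing to rule out escape to the boundary.

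Given this, OSI cannot keep increasing the potential without bound. Hence $\DS(\eta_k)\to 0$. Once $\DS(\eta_k)$ is small, a quantitative version of Lemma~\ref{lemma_DS_unit_capacity} for approximately doubly stochastic maps (cf.\ \cite[Lemma~3.9]{ggow2020}) gives $\Capa(\eta_k)>0$. We then transfer back to $\eta$ via the scaling formula of Proposition~\ref{propo_scalings_capacity}.
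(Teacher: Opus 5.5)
Your proof that $\Capa(\eta)>0$ forces rank non-decreasingness is correct. Indeed $\det B_\varepsilon=1$, and Fischer's inequality on the $Q$/$(I-Q)$ blocks gives $\det\eta(B_\varepsilon)=O(\varepsilon^{n(k-\ell)/k})$, including the edge cases $k=n$ and $\ell=0$. The paper proves neither direction; it cites Gurvits and remarks only that his argument runs through mixed discriminants.

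The converse has a genuine gap, at exactly the step you flag as hard. The quantity that a normalization step multiplies by $\det\eta_k^*(I)^{-1}\ge 1$ is the capacity itself: $\Capa(\eta_{k+1})=\Capa(\eta_k)/\det\eta_k^*(I)$ by Proposition~\ref{propo_scalings_capacity}. Its upper bound after normalization is trivial, $\Capa(\eta_k)\le 1$. What a ``monotone bounded potential'' argument actually needs is a positive \emph{starting} value. If $\Capa(\eta_0)=0$, every iterate has capacity $0$, and nothing forces $\DS(\eta_k)\to 0$.

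The bound you intend to import, $\det\eta(B)\ge c\det B$, does not help. On a compact set of positive definite $B$ it is trivial and says nothing about $\Capa$. On all $B\succ 0$ it is, by homogeneity, exactly $\Capa(\eta)\ge c>0$, so quoting Gurvits' Lemma~4.5 for it quotes the statement under proof.

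Your fallback (attainment by compactness) is false: rank non-decreasingness does not prevent escape to the boundary. Take the pencil of Example~\ref{exa:full_not_LR} and $X=\bigl(\begin{smallmatrix}a&w\\ \bar w&c\end{smallmatrix}\bigr)$. Then $\eta(X)=\bigl(\begin{smallmatrix}c&\bar w\\ w&a+c\end{smallmatrix}\bigr)$, so $\det\eta(X)=\det X+c^2$. Hence $\Capa(\eta)=1$, but the infimum is approached only as $c\to 0$ and is never attained. It cannot be attained: an interior minimizer would satisfy the first-order condition of Lemma~\ref{lemma_critical_global_minimizers}, making $\eta$ DS-scalable and the pencil LR-semisimple by Proposition~\ref{propo_DS-scalable_LR-semisimple}.

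The missing idea is Gurvits' mixed-discriminant bound, which gives the result in a few lines with no Sinkhorn iteration.
\begin{itemize}
\item For $X\succ 0$, write $X=\sum_j\lambda_j u_ju_j^*$ with $U=[u_1,\dots,u_n]$ unitary, and set $A_j:=\eta(u_ju_j^*)\succeq 0$.
\item Then $\det\eta(X)=\det\bigl(\sum_j\lambda_jA_j\bigr)$ is a homogeneous polynomial in $\lambda$. Its coefficients are multiples of mixed discriminants of positive semidefinite matrices, hence nonnegative, and the coefficient of $\lambda_1\cdots\lambda_n$ is $D(A_1,\dots,A_n)$. So $\det\eta(X)\ge D(A_1,\dots,A_n)\det X$.
\item Expanding $A_j=\sum_k v_{jk}v_{jk}^*$ and using multilinearity gives $D(A_1,\dots,A_n)=\sum_{k_1,\dots,k_n}\bigl|\det[v_{1k_1},\dots,v_{nk_n}]\bigr|^2$. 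By Rado's theorem this is positive iff $\rank\sum_{j\in S}A_j\ge |S|$ for every $S\subseteq\{1,\dots,n\}$.
\item Since $\sum_{j\in S}A_j=\eta(P_S)$ with $P_S=\sum_{j\in S}u_ju_j^*$, that condition is precisely rank non-decreasingness.
\item The map $U\mapsto D\bigl(\eta(u_1u_1^*),\dots,\eta(u_nu_n^*)\bigr)$ is continuous and positive on the compact group $\mathrm{U}(n)$. Hence $\Capa(\eta)\ge\min_U D>0$.
\end{itemize}
Compactness is used on $\mathrm{U}(n)$ rather than on $\{\det X=1\}$, which is what avoids the non-attainment above. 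No rationality assumption enters; that matters only for Gurvits' quantitative complexity bounds.
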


The proof is based on the study of mixed discriminants and their 
relation to rank non-decreasing maps and capacity.

\end{proof}


\begin{proof}[Proof of part \textup{(ii)}]

\textit{(Splittable $\Rightarrow$ decomposable).}\;
By splittability, there exist subspaces $\RR,\LL\subset \C^n$ with 
$1\le \dim\RR = \dim\LL = s \le n-1$ and $A_i(\RR)\subseteq\LL$ 
for all $i$.  Choose $B = P_\RR$ (the orthogonal projection onto~$\RR$), 
so $\rank(B) = s$.  For each~$i$, 
$\im(A_i B A_i^*) \subseteq A_i(\RR) \subseteq \LL$, hence
\[
\rank\bigl(\eta_A(B)\bigr) \le \dim\LL = s = \rank(B).
\]
Since $1\le s\le n-1$, this shows $\eta_A$ is not indecomposable.

\medskip
\textit{(Decomposable $\Rightarrow$ splittable).}\;
Since $\eta_A$ is not indecomposable, there exists $B\succeq 0$ with 
$\rank(B) = s$, $1\le s\le n-1$, and $\rank(\eta_A(B))\le s$.  Set
\[
\RR := \im(B), \qquad \LL := \im(\eta_A(B)), \qquad t := \dim\LL \le s.
\]

\textit{Step~1: $A_i(\RR)\subseteq\LL$ for all~$i$.}\;
Since $\eta_A(B)\ge 0$, we have $\LL^\perp = \ker(\eta_A(B))$.  
For any $\xi\in\LL^\perp$,
\[
0 = \langle\xi,\,\eta_A(B)\,\xi\rangle 
  = \sum_i \langle A_i^*\xi,\; B\,A_i^*\xi\rangle.
\]
Each summand is nonnegative, so $\langle A_i^*\xi,\, B A_i^*\xi\rangle = 0$ 
for all~$i$.  Since $B\succeq 0$, this gives $A_i^*\xi\in\ker(B) = \RR^\perp$.  
Hence $A_i^*(\LL^\perp)\subseteq\RR^\perp$, equivalently 
$A_i(\RR)\subseteq\LL$.

\medskip
\textit{Step~2: Constructing the splitting pair.}\;
If $t\ge 1$, choose any subspace $\RR'\subseteq\RR$ with 
$\dim\RR' = t$ and set $\LL' = \LL$.  Then 
$A_i(\RR')\subseteq A_i(\RR)\subseteq\LL'$ for all~$i$, and 
$\dim\RR' = \dim\LL' = t$ with $1\le t\le n-1$.

If $t = 0$, then $A_i(\RR) = \{0\}$ for all~$i$ (every vector 
in~$\RR$ lies in $\ker A_i$).  Since $s\ge 1$, pick any 
one-dimensional $\RR'\subset\RR$ and any one-dimensional 
$\LL'\subset\C^n$; then $A_i(\RR') = \{0\}\subseteq\LL'$ for all~$i$.

In both cases $(\RR',\LL')$ is a nontrivial splitting pair.
\end{proof}

Part~(ii) provides a convenient computational criterion for 
unsplittability: it suffices to check that 
$\rank(\eta_A(B)) > \rank(B)$ for every $B\succeq 0$ of intermediate rank.

\begin{exa}
\label{exa_antisymmetric_2}
Consider the $3\times 3$ pencil with $d = 3$ generators
\[
A = \frac{1}{\sqrt{2}} \bmatr{ 0 & x_1 & x_2 \\ 
  -x_1 & 0 & x_3 \\ -x_2 & -x_3 & 0}.
\]
A direct computation gives
\[
\eta_A(B) = \tfrac{1}{2}\bigl((\Tr B)\, I_3 - B\bigr).
\]
Diagonalizing $B = \diag(\lambda_1,\lambda_2,\lambda_3)$ with 
$\lambda_1\ge\lambda_2\ge\lambda_3\ge 0$:

If $\rank(B) = 1$, then $B = \diag(\lambda_1,0,0)$ with $\lambda_1 > 0$, and
\[
\eta_A(B) = \tfrac{\lambda_1}{2}\,\diag(0,1,1),
\qquad \rank(\eta_A(B)) = 2 > 1 = \rank(B).
\]
If $\rank(B) = 2$, then $B = \diag(\lambda_1,\lambda_2,0)$ with 
$\lambda_1\ge\lambda_2 > 0$, and
\[
\eta_A(B) = \tfrac{1}{2}\,\diag(\lambda_2,\,\lambda_1,\,\lambda_1+\lambda_2),
\qquad \rank(\eta_A(B)) = 3 > 2 = \rank(B).
\]
Hence $\eta_A$ is indecomposable and therefore $A$ is unsplittable.
\end{exa}


\begin{proof}[Proof of part \textup{(iii)}]

$(\Rightarrow)$\;
If $A$ is LR-semisimple, there exist $L,R\in GL_n(\C)$ such that 
$L A_i R = \bigoplus_{j=1}^k B_i^{(j)}$ for all~$i$, with each 
sub-pencil $B^{(j)}$ unsplittable.  Set $c_1 = L$, $c_2 = R$; 
the scaled map $S_{c_1,c_2}(\eta_A)(X) = \sum_i (L A_i R)\,X\,(L A_i R)^*$ 
has Kraus operators $L A_i R \in \bigoplus_j M_{n_j}(\C) = \DD'$, 
where $\DD = \operatorname{span}\{P_1,\dotsc,P_k\}$ and $P_j$ 
projects onto the $j$-th block.  The $j$-th corner map is 
$\eta_{B^{(j)}}$, which is indecomposable by part~(ii) since 
$B^{(j)}$ is unsplittable.  By Lemma~\ref{lem:kraus_DCI}, 
$S_{c_1,c_2}(\eta_A)$ is $\DD$-CI.

\smallskip
$(\Leftarrow)$\;
If $S_{c_1,c_2}(\eta_A)$ is $\DD$-CI, 
Lemma~\ref{lem:kraus_DCI} gives Kraus operators 
$c_1 A_i c_2 \in \DD' = \bigoplus_j P_j M_n(\C) P_j$.  Hence 
$c_1 A_i c_2 = \bigoplus_j B_i^{(j)}$ for all~$i$, and the $j$-th 
corner map $\eta_{B^{(j)}}$ is indecomposable.  By part~(ii), 
each $B^{(j)}$ is unsplittable, so $A$ is LR-semisimple 
(with $L = c_1$, $R = c_2$).

The C-semisimple case is identical with $c_1 = c_2$.
\end{proof}

\section{Algebraic equivalences} 
\label{sec:algebraic} 

This section establishes that a Hermitian pencil is 
LR-semisimple if and only if its covariance map is 
symmetrically DS-scalable.

\subsection{Capacity theory: Indecomposable $\Rightarrow$ DS-scalable} 

\begin{lemma}[Critical points are global minimizers]
\label{lemma_critical_global_minimizers}
Let $C \succ 0$, $\det C = 1$, be a critical point of the capacity problem 
$\min\{\det(\eta(X)) : X \succ 0,\, \det X = 1\}$. Then it satisfies FOC conditions  
\begin{equation}
\label{capacity_FOC}
\eta^\ast(\eta(C)^{-1}) = C^{-1},
\end{equation}
 and 
 $\Capa(\eta) = \det(\eta(C))$, that is, $C$ is the (global) minimizer of the capacity problem.
\end{lemma}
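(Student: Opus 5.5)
The plan is to compute the first-order conditions by Lagrange multipliers on the open manifold $\{X \succ 0 : \det X = 1\}$, and then invoke Lemma~\ref{lemma_exact_fixed_points_capacity} to conclude global minimality. Since $C$ is a critical point, $\eta(C)$ must be invertible (implicitly, $\det\eta(C)>0$ so the objective is smooth near $C$). It is convenient to work with $\phi(X) = \log\det\eta(X)$ subject to $\psi(X) = \log\det X = 0$. Criticality of $\det\eta$ and of $\log\det\eta$ coincide because $\log$ is monotone and smooth on $(0,\infty)$.

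For a Hermitian direction $H$, compute
\[
D\phi(C)[H] = \Tr\bigl(\eta(C)^{-1}\eta(H)\bigr) = \Tr\bigl(\eta^\ast(\eta(C)^{-1})H\bigr),
\qquad
D\psi(C)[H] = \Tr(C^{-1}H).
\]
Both gradients, $\eta^\ast(\eta(C)^{-1})$ and $C^{-1}$, are Hermitian; here we use that $\eta^\ast$ is positive and hence preserves Hermiticity. The tangent space of the constraint set at $C$ is $\{H=H^\ast: \Tr(C^{-1}H)=0\}$, so criticality gives a real multiplier $\lambda$ with $\eta^\ast(\eta(C)^{-1}) = \lambda C^{-1}$.

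To identify $\lambda$, pair this identity with $C$:
\[
\lambda n = \Tr\bigl(\eta^\ast(\eta(C)^{-1})C\bigr) = \Tr\bigl(\eta(C)^{-1}\eta(C)\bigr) = n,
\]
so $\lambda = 1$. (Alternatively, use homogeneity: $\det\eta(tX) = t^n\det\eta(X)$.) This yields the FOC \eqref{capacity_FOC}.

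For the second claim, Lemma~\ref{lemma_exact_fixed_points_capacity} applies directly and gives $\Capa(\eta) = \det\eta(C)/\det C = \det\eta(C)$, since $\det C = 1$. Because $C$ is feasible and attains the infimum, it is a global minimizer.

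The only delicate point is justifying the multiplier step. The constraint set is a smooth real submanifold of the real vector space of Hermitian matrices, of codimension one because $D\psi(C)\neq 0$. The objective is smooth near $C$ as long as $\eta(C)\succ 0$, which I take as part of the meaning of critical point; it also holds automatically when $\Capa(\eta)>0$. The substantive content, global optimality, is already carried by the earlier lemma, which ultimately rests on the geodesic convexity argument of Lemma~\ref{lemma_DS_unit_capacity}.
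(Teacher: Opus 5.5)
Your proposal is correct and follows the paper's proof essentially step for step: Lagrange multipliers for $\log\det\eta(X)$ against $\log\det X$, the gradient $\eta^\ast(\eta(C)^{-1})$, pairing with $C$ to force $\lambda = 1$, and then Lemma~\ref{lemma_exact_fixed_points_capacity} for global minimality. Your remarks on the constraint set being a smooth codimension-one submanifold and on the need for $\eta(C)\succ 0$ simply make explicit what the paper leaves implicit.
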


\begin{proof}
By the method of Lagrange multipliers, at $X = C$,
\[
\nabla_X \log\det(\eta(X)) = \lambda\, \nabla_X \log\det(X).
\]
We have $\nabla_X \log\det(X) = X^{-1}$, and by 
Lemma~\ref{lemma_gradient_logdet}, 
$\nabla_X \log\det(\eta(X)) = \eta^\ast(\eta(X)^{-1})$.
Hence $\eta^\ast(\eta(C)^{-1}) = \lambda\, C^{-1}$.
Multiplying both sides by $C$ and taking traces gives
\[
\Tr\bigl(C\,\eta^\ast(\eta(C)^{-1})\bigr) = \lambda\, n.
\]
The left side equals 
$\langle \eta(C)^{-1},\, \eta(C) \rangle = \Tr(I) = n$, 
so $\lambda = 1$. The final claim follows from Lemma~\ref{lemma_exact_fixed_points_capacity} 
and $\det C = 1$.
\end{proof}

\begin{theo}[Theorem 4.7 in \cite{gurvits2004}]
\label{theo_Gurvits_4.7}
Let $\eta: M_n(\bC) \to M_n(\bC)$ be a completely positive map. 
If $\eta$ is indecomposable, then the capacity $\Capa(\eta)$ is 
strictly positive, the infimum in~\eqref{equ_defi_capacity} is 
attained, and the minimizer is unique.
\end{theo}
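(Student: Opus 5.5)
The plan has three parts: positivity of the capacity, existence of a minimizer by a compactness argument, and uniqueness by geodesic convexity.

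\textbf{Positivity.} An indecomposable map is in particular rank non-decreasing. Proposition~\ref{propo_RN_map_capacity} then gives $\Capa(\eta)>0$ directly.

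\textbf{Attainment.} We work with $f(X)=\log\det\eta(X)$ on the slice $\{X\succ 0,\ \det X=1\}$. By the scaling formula (Proposition~\ref{propo_scalings_capacity}) we may replace $\eta$ by a normalized scaling with $\eta(I)=I$; indecomposability and the question of attainment are unchanged under this. The core claim is that $f$ tends to $+\infty$ along every sequence $X_k$ that leaves compact subsets of the slice. After rescaling $X_k/\|X_k\|\to B\succeq 0$, leaving compacts means $1\le\rank B<n$. Indecomposability then gives $\rank\eta(B)>\rank B$.

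Quantitatively, I would order the eigenvalues $\lambda_1\ge\dots\ge\lambda_n$ of $X_k$, with product $1$. For each $s$ I would bound $\det\eta(X_k)$ from below by using the $s$ largest eigen-directions. The strict rank increase forces $\eta$ applied to the top-$s$ spectral part to have rank at least $s+1$, with a uniform lower bound on its smallest nonzero eigenvalues, by compactness of the Grassmannian and of the set of rank-$s$ projections. This yields a Hall-type (mixed discriminant) lower bound of the form $\det\eta(X)\ge c\,\lambda_1\cdots\lambda_{s}\cdot\lambda_s^{\,\epsilon}$ (schematically), which diverges when the eigenvalue ratios degenerate. The estimate is cleanest written via Gurvits' mixed-discriminant expansion $\det\eta(X)=\sum$ over products of eigenvalues with nonnegative coefficients. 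Indecomposability guarantees nonzero coefficients on monomials that are "more balanced" than the degenerate direction, and these monomials blow up. Once coercivity holds, a minimizing sequence stays in a compact set and the infimum is attained.

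\textbf{Uniqueness.} Here I would use geodesic convexity. Along $X(t)=C^{1/2}(C^{-1/2}DC^{-1/2})^{t}C^{1/2}$ the determinant stays equal to $1$. Lemma~\ref{lemma_logdet_convexity}, applied as in Lemma~\ref{lemma_DS_unit_capacity} after conjugating to base point $C$, gives $h''(t)=\Tr\Psi_t(L^2)-\Tr\Psi_t(L)^2\ge 0$. Suppose $C\ne D$ are both minimizers; then $h$ is constant on $[0,1]$, so $h''\equiv0$. Equality in Kadison--Schwarz (Theorem~\ref{theo_choi}) puts the traceless Hermitian $L\neq 0$ in the multiplicative domain of $\Psi_0$. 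Here $\Psi_0$ is a unital scaling of $\eta$ at the DS point given by Lemma~\ref{lemma_exact_fixed_points_capacity}, and it is still indecomposable.

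For a DS map, $\Psi(L^2)=\Psi(L)^2$ with $\Tr$ preserved implies that each spectral projection $P$ of $L$ is mapped to a projection $\Psi(P)$ of equal trace, hence equal rank. Since $L$ is nonzero and traceless, some such $P$ has intermediate rank. This contradicts indecomposability, so the minimizer is unique.

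\textbf{Main obstacle.} I expect the coercivity step to be the hard part, namely turning the qualitative rank increase into a quantitative blow-up of $\det\eta(X_k)$. I would first try the mixed-discriminant/Hall-condition route of Gurvits rather than a direct eigenvalue estimate.
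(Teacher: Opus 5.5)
Your positivity step is the paper's, and your attainment step is the same Gurvits condition-number argument that the paper sketches. However, your displayed bound $c\,\lambda_1\cdots\lambda_s\lambda_s^{\epsilon}$ is only schematic and should be replaced by one specific monomial.

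Write $X=\sum_i\lambda_iu_iu_i^*$ with $\lambda_1\ge\cdots\ge\lambda_n$ and $\prod_i\lambda_i=1$, and set $A_i=\eta(u_iu_i^*)$. All coefficients of $\det(\sum_i\lambda_iA_i)$ are nonnegative mixed discriminants.
\begin{itemize}
\item The balanced monomial $\lambda_1\cdots\lambda_n$ is identically $1$ on the slice. It only reproduces $\Capa(\eta)>0$.
\item The monomial $\lambda_1^2\lambda_2\cdots\lambda_{n-1}=\lambda_1/\lambda_n$ does the work. Its coefficient is a positive multiple of $D(A_1,A_1,A_2,\ldots,A_{n-1})$.
\item By the Hall-type (Panov) criterion, this mixed discriminant is positive iff every sub-collection of $k$ terms sums to a matrix of rank at least $k$.
\item A sub-collection whose distinct indices form $T\subseteq\{1,\ldots,n-1\}$ has at most $|T|+1$ terms. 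Indecomposability gives $\rank\eta(\sum_{j\in T}u_ju_j^*)\ge|T|+1$, which is exactly enough.
\end{itemize}
Continuity in the frame and compactness of the unitary group then give $\det\eta(X)\ge c\,\lambda_1/\lambda_n$ uniformly. This is precisely the paper's condition-number bound that confines a minimizing sequence to a compact set.

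Your uniqueness argument is correct and takes a genuinely different route from the paper. The paper reduces to the doubly stochastic case $C=I$. It then gets strict convexity of $\xi\mapsto\log\det(\sum_ie^{\xi_i}A_i)$ on $\{\sum_i\xi_i=0\}$ from the mixed-discriminant expansion, with indecomposability ensuring that the support of that polynomial is rich enough. You use convexity along the same curve: in the eigenbasis of $C^{-1/2}DC^{-1/2}$, your geodesic is the line $t\xi$. But you treat the equality case operator-theoretically. From $h\equiv\log\Capa(\eta)$ on $[0,1]$ you get $h''(0)=0$. Lemma~\ref{lemma_logdet_convexity} and Theorem~\ref{theo_choi} then make $\Psi_0$ a $*$-homomorphism on $C^*(L)$. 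An intermediate-rank spectral projection of the nonzero traceless $L$ is therefore sent to a projection of the same rank, contradicting indecomposability.

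The details hold. We have $\det X(t)=1$ and $\Tr L=\log\det D-\log\det C=0$. The map $\Psi_0=S_{\eta(C)^{-1/2},\,C^{1/2}}(\eta)$ inherits indecomposability from $\eta$. It is doubly stochastic because a minimizer satisfies the first-order condition of Lemma~\ref{lemma_critical_global_minimizers}. That lemma is the reference you need; Lemma~\ref{lemma_exact_fixed_points_capacity} only supplies the scaling.

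Trace preservation is in fact unnecessary. The images of the spectral projections of $L$ are orthogonal projections summing to $I$, so a strict rank increase on each would give total rank exceeding $n$.

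Your route is self-contained given the appendix and mirrors the multiplicative-domain argument of Lemma~\ref{lemma_unit_eigenstates}. It also identifies the obstruction to uniqueness as an invariant block structure. The paper's route instead keeps attainment and uniqueness within a single mixed-discriminant framework.
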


\begin{proof}[Proof sketch]
Positivity of capacity follows from 
Proposition~\ref{propo_RN_map_capacity}, since indecomposable 
maps are rank non-decreasing. Attainment follows from a 
compactness argument: indecomposability provides a uniform bound 
on the condition number $\gamma_1/\gamma_n$ of the eigenvalues 
of any competitor $X \succ 0$ with $\det(\eta(X)) \leq \det(\eta(I))$, 
which confines the minimizing sequence to a compact subset of 
$\{X \succ 0 : \det X = 1\}$. For uniqueness, 
Lemma~\ref{lemma_critical_global_minimizers} reduces to showing 
that the doubly stochastic case ($C = I$) has no second minimizer; 
this is established by a strict convexity argument for the function 
$(\xi_1, \ldots, \xi_n) \mapsto \log\det\bigl(\sum_i e^{\xi_i} A_i\bigr)$ 
on the hyperplane $\sum \xi_i = 0$, where $A_i = \eta(u_i u_i^*)$ 
are the images of rank-one projections in the eigenbasis of 
the putative second minimizer. See~\cite{gurvits2004} for details.
\end{proof}

\begin{coro}
\label{coro_indecomposable_DS_scalable}
If $\eta: M_n(\mathbb{C}) \to M_n(\mathbb{C})$ is indecomposable, 
then $\eta$ is DS-scalable.
\end{coro}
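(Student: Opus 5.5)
By Theorem~\ref{theo_Gurvits_4.7}, an indecomposable $\eta$ has $\Capa(\eta)>0$, and the infimum in~\eqref{equ_defi_capacity} is attained at some $C\succ 0$. We may normalize $\det C = 1$, since the objective $\det\eta(X)$ and the constraint scale compatibly. The plan is to show that this minimizer is a critical point of the constrained problem, read off the first-order conditions, and then write down an explicit doubly stochastic scaling built from $C$.

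First, note that $\eta(C)\succ 0$. Since $\Capa(\eta)>0$, we have $\det\eta(C)>0$, so $\log\det\eta(X)$ is smooth near $C$ on the open cone of positive definite matrices. The constraint set $\{X\succ 0:\det X=1\}$ is a smooth manifold there, and $C$ is an interior minimizer on it. Hence $C$ is a critical point, and Lemma~\ref{lemma_critical_global_minimizers} gives
\[
\eta^\ast\bigl(\eta(C)^{-1}\bigr)=C^{-1}.
\]

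Next, take $c_1=\eta(C)^{-1/2}$ and $c_2=C^{1/2}$. Both are invertible, since $C\succ 0$ and $\eta(C)\succ 0$. Exactly as in the proof of Lemma~\ref{lemma_exact_fixed_points_capacity}, the map $Z=S_{c_1,c_2}(\eta)$ satisfies
\[
Z(I)=\eta(C)^{-1/2}\,\eta(C)\,\eta(C)^{-1/2}=I,
\qquad
Z^\ast(I)=C^{1/2}\,\eta^\ast\bigl(\eta(C)^{-1}\bigr)\,C^{1/2}=I.
\]
So $Z$ is doubly stochastic, and $\eta$ is DS-scalable in the sense of Definition~\ref{defi_DS_scalable}.

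The only step needing care is the justification that the minimizer is a genuine critical point: it must lie in the open set where $\eta(C)$ is invertible, so that the Lagrange multiplier argument applies. This is guaranteed by attainment together with positivity of capacity, both supplied by Theorem~\ref{theo_Gurvits_4.7}. All the real difficulty, namely compactness of the minimizing sequence, is therefore delegated to Gurvits' theorem.
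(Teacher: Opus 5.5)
Your proof is correct and follows the paper's argument essentially step for step: Gurvits' Theorem~\ref{theo_Gurvits_4.7} gives a minimizer, Lemma~\ref{lemma_critical_global_minimizers} gives the first-order condition $\eta^\ast(\eta(C)^{-1}) = C^{-1}$, and the explicit scaling $c_1 = \eta(C)^{-1/2}$, $c_2 = C^{1/2}$ is doubly stochastic. Your added check that $\eta(C) \succ 0$ (from $\det\eta(C) = \Capa(\eta) > 0$), which is what makes the Lagrange-multiplier step legitimate, is a useful detail the paper leaves implicit.
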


\begin{proof}
By Theorem~\ref{theo_Gurvits_4.7}, the capacity infimum is attained 
at some $C \succ 0$. By Lemma~\ref{lemma_critical_global_minimizers}, 
$C$ is a critical point of $X \mapsto \log\det(\eta(X)) - \log\det(X)$ 
on $\{X \succ 0 : \det X = 1\}$, so the first-order conditions give
\[
\eta^*\!\left(\eta(C)^{-1}\right) = C^{-1}.
\]
Setting $c_1 := \eta(C)^{-1/2}$ and $c_2 := C^{1/2}$, the scaled map
$\tilde\eta := S_{c_1,\, c_2}(\eta)$ satisfies
\begin{align*}
\tilde\eta(I) &= \eta(C)^{-1/2}\,\eta(C)\,\eta(C)^{-1/2} = I,
\\
\tilde\eta^*(I) &= C^{1/2}\,\eta^*\!\left(\eta(C)^{-1}\right) C^{1/2} 
= C^{1/2} C^{-1} C^{1/2} = I.
\end{align*}
Hence $\tilde\eta$ is doubly stochastic, so $\eta$ is DS-scalable 
by Definition~\ref{defi_DS_scalable}.
\end{proof}

\subsection{Uniqueness trick: self-adjoint indecomposable $\Rightarrow$ Sym DS-scalable} 


\begin{lemma}
\label{lem:indecomposable_sym_scalable}
If a completely positive map $\eta$ is self-adjoint and indecomposable then there exists a unique $C \succ 0$ with $\eta(C) = C^{-1}$.  
\end{lemma}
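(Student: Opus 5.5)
The plan is to use Gurvits' uniqueness result (Theorem~\ref{theo_Gurvits_4.7}) together with the self-adjointness $\eta^\ast=\eta$. The section title's ``uniqueness trick'' suggests the key idea: the capacity minimizer is unique, and an involution of the capacity problem built from $\eta=\eta^\ast$ must therefore fix it.

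\textbf{Existence.} By Theorem~\ref{theo_Gurvits_4.7}, the infimum defining $\Capa(\eta)$ is attained at a unique $C_0\succ 0$ with $\det C_0=1$. By Lemma~\ref{lemma_critical_global_minimizers}, $C_0$ satisfies $\eta(\eta(C_0)^{-1})=C_0^{-1}$, using $\eta^\ast=\eta$. Set $D_0:=\eta(C_0)^{-1}/\det(\eta(C_0)^{-1})^{1/n}$, so that $\det D_0=1$.

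We check that $D_0$ is also a minimizer. The function $X\mapsto\det\eta(X)$ is homogeneous of degree $n$. Hence
\[
\det\eta(D_0)=\det\bigl(\eta(\eta(C_0)^{-1})\bigr)\,\det\eta(C_0)=\det(C_0^{-1})\det\eta(C_0)=\det\eta(C_0)=\Capa(\eta).
\]
By uniqueness, $D_0=C_0$. Therefore $\eta(C_0)^{-1}=\kappa C_0$ with $\kappa=\det(\eta(C_0))^{-1/n}>0$, that is, $\eta(C_0)=\kappa^{-1}C_0^{-1}$. Setting $C:=\kappa^{1/2}C_0$ gives $\eta(C)=\kappa^{1/2}\eta(C_0)=\kappa^{-1/2}C_0^{-1}=C^{-1}$. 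This proves existence.

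\textbf{Uniqueness.} Let $C'\succ 0$ satisfy $\eta(C')=C'^{-1}$. Then $\eta(\eta(C')^{-1})=\eta(C')=C'^{-1}$, so $C'$ satisfies the first-order condition~\eqref{capacity_FOC}. By Lemma~\ref{lemma_exact_fixed_points_capacity} (which requires no normalization), the normalized matrix $C'/\det(C')^{1/n}$ is then a minimizer of the capacity problem. By Gurvits' uniqueness it equals $C_0$, so $C'=\lambda C_0$ for some $\lambda>0$. Plugging into $\eta(C')=C'^{-1}$ gives $\lambda\,\kappa^{-1}C_0^{-1}=\lambda^{-1}C_0^{-1}$, hence $\lambda^2=\kappa$. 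Since $\lambda>0$, this forces $\lambda=\kappa^{1/2}$ and $C'=C$.

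\textbf{Main obstacle.} The only delicate point is that the uniqueness in Theorem~\ref{theo_Gurvits_4.7} is uniqueness on the slice $\det X=1$, i.e.\ uniqueness up to positive scaling. I would be careful to normalize determinants at each step, as done above. The rest is bookkeeping.
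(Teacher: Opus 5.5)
Your proof is correct and is essentially the paper's argument: Gurvits' uniqueness of the determinant-one capacity minimizer forces the normalized $\eta(C_0)^{-1}$ to equal $C_0$, and a scalar rescaling then gives $\eta(C)=C^{-1}$. You check that the normalized $\eta(C_0)^{-1}$ is a minimizer by a direct determinant computation, where the paper uses the fixed-point form of the first-order condition; your uniqueness paragraph also spells out a step the paper only asserts, namely that any solution of $\eta(X)=X^{-1}$ satisfies the first-order condition and is therefore a positive multiple of $C_0$ with the multiple fixed by $\lambda^2=\kappa$.
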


\begin{proof}

By Theorem \ref{theo_Gurvits_4.7}, the solution of the capacity problem is attained at some matrix $C \succ 0$ with $\det C = 1$. The FOC for this matrix is given by \eqref{capacity_FOC}. 

Since $\eta^\ast = \eta$, the FOC becomes
  \begin{equation}
 \label{selfadjoint_capacity_FOC}
 \Phi(C) := \eta\big(\eta(C)^{-1}\big)^{-1} = C.
 \end{equation}
 
 Conversely, any $C \succ 0$ with $\det C = 1$ 
satisfying~\eqref{selfadjoint_capacity_FOC} also satisfies 
the FOC~\eqref{capacity_FOC} (since $\eta^\ast = \eta$), 
and is therefore a critical point of the capacity problem. 
By Lemma~\ref{lemma_critical_global_minimizers}, it is a global 
minimizer. Since $\eta$ is indecomposable, the minimizer is unique 
by Theorem~\ref{theo_Gurvits_4.7}, so~\eqref{selfadjoint_capacity_FOC} 
has a unique solution with $\det C = 1$.

 Let $C$ be the unique solution of  \eqref{selfadjoint_capacity_FOC} that satisfies $\det(C) = 1$. Define $\tilde C := \eta(C)^{-1}$ and $\alpha := \det (\tilde C)$. Then by \eqref{selfadjoint_capacity_FOC}, $\eta\big(\eta(\tilde C)^{-1})\big)^{-1} = \tilde C$, so both $\tilde C$ and (by homogeneity of $\Phi$) $\alpha^{-1/n} \tilde C$ are solutions of \eqref{selfadjoint_capacity_FOC} and $\det(\alpha^{-1/n} \tilde C) = 1$. Set  $\beta = \alpha^{-1/n}$. By the uniqueness of the solution of \eqref{selfadjoint_capacity_FOC} we have $ C = \beta \tilde C$ which we can write as
 \[
\beta^{-1/2} C = \beta^{1/2} \eta(C)^{-1} = \eta(\beta^{-1/2} C)^{-1}. 
 \]
Hence $\beta^{-1/2} C$ is a solution of equation 
 \[
 \eta(X) X = I, 
 \]
 and this solution is unique in the cone of positive definite matrices. 

\end{proof}


\begin{coro}
\label{coro:indecomposable_sym_scalable}
If a completely positive map $\eta$ is self-adjoint and indecomposable then it is symmetrically DS-scalable.
\end{coro}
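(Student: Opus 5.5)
The plan is to derive this corollary directly from Lemma~\ref{lem:indecomposable_sym_scalable} together with the reformulation~\eqref{equ_symmetric_scalability} of symmetric DS-scalability. The proof proceeds in three steps.

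\emph{Step 1.} Since $\eta$ is self-adjoint and indecomposable, Lemma~\ref{lem:indecomposable_sym_scalable} provides a (unique) $C \succ 0$ with $\eta(C) = C^{-1}$.

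\emph{Step 2.} Define $\tilde\eta := S_{C^{1/2},C^{1/2}}(\eta)$, so that $\tilde\eta(X) = C^{1/2}\eta(C^{1/2}XC^{1/2})C^{1/2}$. Unitality is then immediate:
\[
\tilde\eta(I) = C^{1/2}\eta(C)C^{1/2} = C^{1/2}C^{-1}C^{1/2} = I.
\]

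\emph{Step 3.} For trace preservation, compute the adjoint from the Kraus form. By Lemma~\ref{lemma_hermitian_kraus} we may write $\eta(X) = \sum_j H_j X H_j$ with each $H_j$ Hermitian. Then
\[
\tilde\eta(X) = \sum_j (C^{1/2}H_jC^{1/2})\, X\, (C^{1/2}H_jC^{1/2}),
\]
and the Kraus operators $C^{1/2}H_jC^{1/2}$ are again Hermitian. Hence $\tilde\eta^\ast = \tilde\eta$, which gives $\tilde\eta^\ast(I) = \tilde\eta(I) = I$. So $\tilde\eta$ is doubly stochastic in the sense of Definition~\ref{defi_DS}, and $\eta$ is symmetrically DS-scalable by Definition~\ref{defi_symmetric_DS}.

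There is no real obstacle once Lemma~\ref{lem:indecomposable_sym_scalable} is available. The only point needing care is Step 3: self-adjointness must survive the symmetric scaling, and it does because we scale on both sides by the same Hermitian matrix $C^{1/2}$.
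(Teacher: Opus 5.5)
Your proposal is correct and follows the paper's route: take the $C \succ 0$ with $\eta(C) = C^{-1}$ supplied by Lemma~\ref{lem:indecomposable_sym_scalable}, and observe that $S_{C^{1/2},C^{1/2}}(\eta)$ is doubly stochastic. Your Steps 2--3 just spell out the unitality and self-adjointness check that the paper records once, right after Definition~\ref{defi_symmetric_DS}, when it reduces symmetric DS-scalability to \eqref{equ_symmetric_scalability}.
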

\begin{proof}
Let $C \succ 0$ be the solution of $\eta(C) = C^{-1}$. Then $S_{C^{1/2}, C^{1/2}}(\eta)$ is doubly stochastic. 
\end{proof}

\textbf{Remark.} The existence of a symmetric DS-scaling will also be established later by a different method (Proposition \ref{propo_LR_C}). What is important in Lemma \ref{lem:indecomposable_sym_scalable} is that for indecomposable maps, the scaling matrix $C$ is \emph{unique} -- a stronger conclusion than the main theorem provides, where only the uniqueness of the trace minimizer within $\mathbb{S}$ is obtained.

\subsection{LR-semisimple $\Longleftrightarrow$ DS-scalable} 

%

\begin{lemma}[Blockwise DS $\Rightarrow$ global DS for $\mathcal{D}$-bimodular maps]
\label{lemma_BDS_GDS}
Let $\eta\colon M_n(\mathbb{C})\to M_n(\mathbb{C})$ be CP and $\mathcal{D}$-bimodular,
with corner maps $\eta_j := P_j\,\eta(\,\cdot\,)\,P_j$ on $P_j M_n(\mathbb{C}) P_j$.
Suppose that for each $j$ there exist invertible
$a_j, b_j \in P_j M_n(\mathbb{C}) P_j$ such that $S_{a_j,b_j}(\eta_j)$
is doubly stochastic on $P_j M_n(\mathbb{C}) P_j$.
Setting $d_1 := \bigoplus_j a_j$ and $d_2 := \bigoplus_j b_j \in \mathcal{D}'$,
the map $\eta' := S_{d_1,d_2}(\eta)$ is doubly stochastic on $M_n(\mathbb{C})$.
\end{lemma}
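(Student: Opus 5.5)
The plan is a direct computation of $\eta'(I)$ and $(\eta')^\ast(I)$ using the block structure. By Lemma~\ref{lem:kraus_DCI}, or rather by its $(\Rightarrow)$ half, which uses only $\DD$-bimodularity, I first write $\eta(X) = \sum_i A_i X A_i^\ast$ with every $A_i \in \DD' = \bigoplus_j P_j M_n(\C) P_j$. Writing $A_i^{(j)} := P_j A_i P_j$, the corner map is
\[
\eta_j(Y) = \sum_i A_i^{(j)}\, Y\, A_i^{(j)\ast}, \qquad Y \in P_j M_n(\C) P_j .
\]

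Next I compute the scaled map. By Definition~\ref{defi_operator_scaling},
\[
\eta'(X) = d_1\,\eta(d_2^\ast X d_2)\,d_1^\ast = \sum_i (d_1 A_i d_2^\ast)\, X\, (d_1 A_i d_2^\ast)^\ast .
\]
Since $d_1$, $d_2$ and $A_i$ are all block diagonal with respect to $P_1,\dots,P_r$, each Kraus operator $d_1 A_i d_2^\ast$ equals $\bigoplus_j a_j A_i^{(j)} b_j^\ast$. So $\eta'$ is again $\DD$-bimodular, and its $j$-th corner map is exactly $S_{a_j,b_j}(\eta_j)$.

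Then I evaluate at the identity, using $I = \sum_j P_j$ and bimodularity:
\[
\eta'(I) = \sum_j \eta'(P_j) = \sum_j P_j\,\eta'(P_j)\,P_j = \sum_j S_{a_j,b_j}(\eta_j)(P_j) = \sum_j P_j = I .
\]
Here $P_j$ is the identity of the corner algebra $P_j M_n(\C) P_j$, and the last step uses that each scaled corner map is unital. For the adjoint, $(\eta')^\ast$ has Kraus operators $(d_1 A_i d_2^\ast)^\ast$, which are also block diagonal. Its corners are $S_{a_j,b_j}(\eta_j)^\ast$, taken with respect to the Hilbert--Schmidt inner product on the corner algebra. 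The same calculation, now using that each scaled corner map is trace-preserving, gives $(\eta')^\ast(I) = I$. Hence $\eta'$ is doubly stochastic.

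There is no serious obstacle. The only care needed is bookkeeping: the corner adjoint must be the restriction of the global adjoint, which holds because everything is block diagonal and $\Tr$ restricted to $P_j M_n(\C) P_j$ is the corner trace. Also, $d_1$ and $d_2$ are invertible because each $a_j$, $b_j$ is invertible in its corner, although invertibility is not needed for the DS identities themselves.
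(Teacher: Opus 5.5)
Your proof is correct and follows the paper's argument. Both show that $\eta'$ is $\mathcal{D}$-bimodular with corners $S_{a_j,b_j}(\eta_j)$, and then sum the corner identities $\eta'(P_j)=P_j$ and $(\eta')^\ast(P_j)=P_j$ over $j$. The only difference is bookkeeping: you get bimodularity of $\eta'$ and $(\eta')^\ast$, and the identification of the corner adjoints, from block-diagonal Kraus operators via Lemma~\ref{lem:kraus_DCI}, while the paper derives them directly from $d_1,d_2\in\mathcal{D}'$ and the fact that $\mathcal{D}$ is a $*$-algebra.
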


\begin{proof}
Since $d_1,d_2\in\mathcal{D}'$ and $\eta$ is $\mathcal{D}$-bimodular, so is $\eta'$.
As $d_1$ (resp.\ $d_2$) restricts to $a_j$ (resp.\ $b_j$) on the $j$-th block,
the corners of $\eta'$ decouple:
\[
  (\eta')_j = S_{a_j,b_j}\!\left(\eta_j\right), \qquad j=1,\ldots,r.
\]
By assumption, each $(\eta')_j$ is doubly stochastic on $P_j M_n(\mathbb{C}) P_j$, so
\[
  (\eta')_j(P_j)=P_j
  \qquad\text{and}\qquad
  \bigl((\eta')_j\bigr)^{\!*}(P_j)=P_j.
\]
Since $P_j\in P_j M_n(\mathbb{C}) P_j$, the first identity gives $\eta'(P_j)=P_j$,
and by linearity,
\[
  \eta'(I)=\sum_{j=1}^r \eta'(P_j)=\sum_{j=1}^r P_j=I.
\]
For the adjoint: since $\mathcal{D}$ is a $*$-algebra, $\eta'^*$ is also $\mathcal{D}$-bimodular,
so $(\eta'^*)_j=\bigl((\eta')_j\bigr)^{\!*}$.
The second identity above therefore gives $\eta'^*(P_j)=P_j$, and the
same linearity argument yields $\eta'^*(I)=I$.
Hence $\eta'$ is doubly stochastic.
\end{proof}


\begin{propo}[LR-semisimple $\Rightarrow$ DS-scalable]
\label{propo_LR-semisimple_DS-scalable}
If a matrix pencil $A$ is LR-semisimple, then the corresponding CP map $\eta_A$ is DS-scalable.
\end{propo}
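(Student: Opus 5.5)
The plan is to chain together the results already established: the pencil--map dictionary (Theorem~\ref{theo_pencil_map_dictionary}(iii)), Gurvits' theory for indecomposable maps (Corollary~\ref{coro_indecomposable_DS_scalable}), the blockwise-to-global Lemma~\ref{lemma_BDS_GDS}, and finally the observation that a composition of scalings is again a scaling.

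\textbf{Step 1: pass to a $\DD$-CI map.} Suppose $A$ is LR-semisimple. By Theorem~\ref{theo_pencil_map_dictionary}(iii), direction $(\Rightarrow)$, there exist $L,R\in GL_n(\C)$ and a diagonal subalgebra $\DD=\operatorname{span}\{P_1,\dots,P_k\}$ such that $\eta' := S_{L,R^*}(\eta_A)$ is $\DD$-CI. I take $c_2=R^*$ so that the Kraus operators of $\eta'$ are exactly $LA_iR$, matching the convention $S_{c_1,c_2}(\eta)(X)=c_1\eta(c_2^*Xc_2)c_1^*$. In particular $\eta'$ is $\DD$-bimodular. Moreover, each corner map $\eta'_j = P_j\eta'(\cdot)P_j$ on $P_jM_n(\C)P_j\cong M_{n_j}(\C)$ is indecomposable; it is the CP map $\eta_{B^{(j)}}$ of the unsplittable block $B^{(j)}$.

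\textbf{Step 2: scale each block.} Apply Corollary~\ref{coro_indecomposable_DS_scalable} to each $\eta'_j$, regarded as a CP map on $M_{n_j}(\C)$. This gives invertible $a_j,b_j\in P_jM_n(\C)P_j$ with $S_{a_j,b_j}(\eta'_j)$ doubly stochastic. The degenerate case $n_j=1$ needs a separate check, since indecomposability is then vacuous. There the block is a nonzero $1\times 1$ pencil, so $\eta'_j(x)=\lambda x$ with $\lambda>0$, and scaling by $a_j=\lambda^{-1/2}$, $b_j=1$ works. Lemma~\ref{lemma_BDS_GDS} then shows that $S_{d_1,d_2}(\eta')$ is doubly stochastic, where $d_1=\bigoplus_j a_j$ and $d_2=\bigoplus_j b_j$.

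\textbf{Step 3: compose the scalings.} A direct check shows
\[
S_{d_1,d_2}\bigl(S_{c_1,c_2}(\eta)\bigr)(X) = d_1c_1\,\eta\bigl(c_2^*d_2^*Xd_2c_2\bigr)\,c_1^*d_1^* = S_{d_1c_1,\,d_2c_2}(\eta)(X).
\]
Hence $S_{d_1L,\,d_2R^*}(\eta_A)$ is doubly stochastic, and the matrices $d_1L$ and $d_2R^*$ are invertible. So $\eta_A$ is DS-scalable.

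\textbf{Main obstacle.} The only delicate points are bookkeeping: matching the adjoint convention in the scaling $S_{c_1,c_2}$ with the LR-equivalence $LA_iR$, and confirming that Corollary~\ref{coro_indecomposable_DS_scalable} genuinely applies to each corner map. The latter requires that an indecomposable map on $M_{n_j}$ with $n_j\ge 2$ has positive capacity attained, which is Theorem~\ref{theo_Gurvits_4.7}, together with the trivial $1\times1$ case handled in Step~2. No analytic difficulty arises beyond what these cited results already provide.
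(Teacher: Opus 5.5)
Your proof is correct and follows the paper's route: the dictionary (Theorem~\ref{theo_pencil_map_dictionary}(iii)) to reach a $\DD$-CI scaling, Corollary~\ref{coro_indecomposable_DS_scalable} on each indecomposable corner, Lemma~\ref{lemma_BDS_GDS} to go from blockwise to global, and then composition of scalings. Your extra bookkeeping covers two points the paper's three-line proof glosses over: taking $c_2=R^*$ to match the convention $S_{c_1,c_2}(\eta)(X)=c_1\eta(c_2^*Xc_2)c_1^*$, and treating $n_j=1$ blocks separately, since indecomposability is vacuous there and nonzeroness must come from unsplittability.
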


\begin{proof}
 By Theorem~\ref{theo_pencil_map_dictionary}, part \textup{(iii)}, $\eta_A$ is scalable to a $\DD$-CI map $\widetilde \eta_A$, with indecomposable corner blocks. These blocks are DS-scalable by Corollary \ref{coro_indecomposable_DS_scalable}. Finally, by  blockwise Lemma \ref{lemma_BDS_GDS}, we obtain that  $\widetilde \eta_A$ and therefore $\eta_A$ are DS-scalable. 
\end{proof}


 \begin{lemma}[Splitting lemma for DS maps]
\label{lemma_splitting_DS}
Let\/ $\Phi(X) = \sum_{i=1}^r B_i X B_i^*$ be a doubly stochastic 
CP map on $M_n(\bC)$, and suppose $(\RR, \LL)$ is a splitting pair 
for the pencil $\sum B_i x_i$, i.e., 
$B_i(\RR) \subseteq \LL$ for all~$i$, with 
$\dim \RR = \dim \LL = k$ and $1 \leq k \leq n{-}1$. 
Then $B_i(\RR^\perp) \subseteq \LL^\perp$ for all~$i$.
\end{lemma}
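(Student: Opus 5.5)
Let $P = P_\RR$ and $Q = P_\LL$ denote the orthogonal projections, both of rank $k$. The plan is a trace-counting argument that uses unitality and trace preservation of $\Phi$ separately.

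First I compute $\Phi(P)$. Since $B_i(\RR)\subseteq\LL$, we have $B_i P = Q B_i P$. Hence
\[
\Phi(P) = \sum_i B_i P B_i^* = Q\,\Phi(P)\,Q .
\]
So $\Phi(P)$ is supported in $\LL$. Unitality gives $\Phi(P) \preceq \Phi(I) = I$, and therefore $\Phi(P)\preceq Q$. Trace preservation gives $\Tr\Phi(P) = \Tr P = k = \Tr Q$. Now $Q - \Phi(P)\succeq 0$ has zero trace, so it vanishes:
\[
\Phi(P) = Q .
\]

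Next I use unitality once more. It gives $\Phi(I-P) = I - Q = P_{\LL^\perp}$. For $\xi \in \LL$ we get
\[
0 = \langle \xi, \Phi(I-P)\xi\rangle = \sum_i \bigl\|(I-P) B_i^*\xi\bigr\|^2 .
\]
This is the same device as Step 1 in the proof of Theorem~\ref{theo_pencil_map_dictionary}(ii). It yields $B_i^*(\LL)\subseteq \RR$ for all $i$.

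Taking orthogonal complements, $B_i^*(\LL)\subseteq\RR$ is equivalent to $B_i(\RR^\perp)\subseteq\LL^\perp$. Indeed, for $x\in\RR^\perp$ and $\xi\in\LL$ we have $\langle B_i x,\xi\rangle = \langle x, B_i^*\xi\rangle = 0$. This is exactly the claim.

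The only delicate step is the equality $\Phi(P)=Q$. It needs both halves of double stochasticity, and the hypothesis $\dim\RR=\dim\LL$ is essential there: it is what makes the traces match. The rest is routine.
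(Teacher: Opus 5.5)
Your proof is correct and is essentially the paper's argument in map-level language: the paper splits $B_i$ into blocks $C_i = QB_iP$ and $E_i = QB_i(I-P)$, and compares $\sum_i\|C_i\|_F^2 = k$ (from $\sum_i B_i^*B_i = I$) with $\sum_i\|C_i\|_F^2 + \sum_i\|E_i\|_F^2 = k$ (from $\sum_i B_iB_i^* = I$). The positive semidefinite matrix $Q - \Phi(P)$ that you show has zero trace is exactly $\sum_i E_iE_i^*$, so your intermediate identity $\Phi(P_\RR) = P_\LL$ is the same trace count packaged more conceptually.
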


\begin{proof}
Let $P$ and $Q$ denote the orthogonal projections onto $\RR$ 
and $\LL$. The splitting condition reads $(I - Q)\,B_i\,P = 0$; 
taking adjoints gives the \emph{dual splitting}
\begin{equation}
\label{equ_dual_splitting}
P\,B_i^*\,(I - Q) = 0 \qquad \text{for all } i.
\end{equation}

Define the block operators
\begin{align*}
C_i &:= Q\,B_i\,P \colon \RR \to \LL, \qquad 
E_i := Q\,B_i\,(I{-}P) \colon \RR^\perp \to \LL, \\
D_i &:= (I{-}Q)\,B_i\,(I{-}P) \colon \RR^\perp \to \LL^\perp.
\end{align*}

By the splitting and dual splitting conditions, $B_i$ decomposes as
\begin{align}
\label{equ_Bi_decomposition}
B_i &\;=\; C_i \;+\; E_i \;+\; D_i  
\\
&\;=\; Q B_i P \;+\; Q B_i(I{-}P) \;+\; (I{-}Q) B_i(I{-}P), \notag
\end{align}
(the fourth block $(I{-}Q)B_iP$ vanishes by the splitting condition).

\medskip
\noindent\textbf{Identity 1.}\; 
Project $\sum_i B_i^* B_i = I$ by $P$ on both sides. 
Using~\eqref{equ_dual_splitting}:
\[
P\Big(\sum_i B_i^* B_i\Big)P 
= \sum_i P B_i^*\underbrace{(Q + (I{-}Q))}_I B_i P 
= \sum_i C_i^* C_i + 0 
= I_\RR,
\]
since $P B_i^*(I{-}Q) = 0$. Hence
\begin{equation}
\label{equ_identity1}
\sum_{i=1}^r C_i^* C_i = I_\RR.
\end{equation}

\medskip
\noindent\textbf{Identity 2.}\;
Project $\sum_i B_i B_i^* = I$ by $Q$ on both sides:
\[
Q\Big(\sum_i B_i B_i^*\Big)Q 
= \sum_i Q B_i\underbrace{(P + (I{-}P))}_I B_i^* Q 
= \sum_i C_i C_i^* + \sum_i E_i E_i^*
= I_\LL,
\]
using $P B_i^* Q = C_i^*$ and $(I{-}P)B_i^*Q = E_i^*$. Hence
\begin{equation}
\label{equ_identity2}
\sum_{i=1}^r C_i C_i^* + \sum_{i=1}^r E_i E_i^* = I_\LL.
\end{equation}

\medskip
\noindent\textbf{Trace argument.}\; 
Taking traces of \eqref{equ_identity1} and~\eqref{equ_identity2}, 
and using $\Tr(C_i^* C_i) = \Tr(C_i C_i^*) = \|C_i\|_F^2$:
\[
\sum_i \|C_i\|_F^2 = k, \qquad 
\sum_i \|C_i\|_F^2 + \sum_i \|E_i\|_F^2 = k.
\]
Hence $\sum_i \|E_i\|_F^2 = 0$, so $E_i = Q\,B_i\,(I{-}P) = 0$ 
for all~$i$. Together with $(I{-}Q)\,B_i\,P = 0$, this gives
\[
B_i(\RR) \subseteq \LL 
\qquad\text{and}\qquad 
B_i(\RR^\perp) \subseteq \LL^\perp 
\qquad \text{for all } i. \qedhere
\]
\end{proof}


\begin{propo}[DS pencil is LR-semisimple]
\label{propo_DS_LR_semisimple}
If\/ $\Phi(X) = \sum_{i=1}^r B_i X B_i^*$ is doubly stochastic on 
$M_n(\bC)$, then the pencil $\sum_{i=1}^r B_i\,x_i$ is 
LR-semisimple.
\end{propo}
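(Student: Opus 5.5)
Proof by induction on $n$, with the splitting lemma (Lemma~\ref{lemma_splitting_DS}) doing the work of turning any splitting into a genuine orthogonal block decomposition.

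\textbf{Base case and unsplittable pencils.} For $n=1$, double stochasticity gives $\sum_i |B_i|^2 = 1$. So some $B_i\neq 0$, and the pencil is a nonzero $1\times 1$ pencil, hence unsplittable. For general $n$, first note that the pencil is nonzero, since $\Phi(I)=I$. If it is unsplittable, it is LR-semisimple with $k=1$ and $L=R=I$.

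\textbf{Splittable case.} Otherwise there is a splitting pair $(\RR,\LL)$ with $\dim\RR=\dim\LL=k$ and $1\le k\le n-1$. By Lemma~\ref{lemma_splitting_DS} we also have $B_i(\RR^\perp)\subseteq\LL^\perp$ for all $i$. Choose unitaries $U$ and $V$ as follows:
\begin{itemize}
\item $V$ maps the first $k$ standard basis vectors onto an orthonormal basis of $\RR$, and the remaining ones onto an orthonormal basis of $\RR^\perp$;
\item $U$ does the same for $\LL$ and $\LL^\perp$.
\end{itemize}
Then $U^*B_iV=\begin{bmatrix}C_i&0\\0&D_i\end{bmatrix}$ for all $i$, with $C_i\in M_k(\bC)$ and $D_i\in M_{n-k}(\bC)$.

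\textbf{The blocks are again doubly stochastic.} The map $X\mapsto U^*\Phi(VXV^*)U$ has Kraus operators $U^*B_iV$. It is still doubly stochastic, because unitary conjugations preserve $\sum B_iB_i^*=I$ and $\sum B_i^*B_i=I$. Reading off the diagonal blocks gives
\[
\sum_i C_iC_i^*=I_k,\quad \sum_i C_i^*C_i=I_k,\quad \sum_i D_iD_i^*=I_{n-k},\quad \sum_i D_i^*D_i=I_{n-k}.
\]
Hence the corner maps $X\mapsto\sum_i C_iXC_i^*$ and $X\mapsto\sum_i D_iXD_i^*$ are doubly stochastic on strictly smaller matrix algebras.

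\textbf{Induction step.} By the induction hypothesis, $\sum C_ix_i$ and $\sum D_ix_i$ are LR-semisimple. So there are $L_1,R_1$ and $L_2,R_2$ bringing them to direct sums of unsplittable pencils. Then $L=(L_1\oplus L_2)U^*$ and $R=V(R_1\oplus R_2)$ bring $\sum B_ix_i$ to the concatenated direct sum of unsplittable blocks. This is exactly Definition~\ref{defi:LR-semisimple}.

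The only substantive step is the splitting lemma, which guarantees that the off-diagonal block $QB_i(I-P)$ vanishes. Everything else is bookkeeping: checking that double stochasticity passes to the diagonal blocks, which follows from the block-diagonal form, and that composing the block-diagonal $L,R$ with the unitaries is legitimate.
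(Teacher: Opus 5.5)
Your proof is correct and follows essentially the same route as the paper: induction on $n$, with the splitting lemma turning a splitting pair into an orthogonal block decomposition via unitaries, the diagonal blocks inheriting double stochasticity, and the induction hypothesis applied to each block. You also make explicit two points the paper leaves implicit, namely that the pencil is nonzero because $\Phi(I)=I$, and that the $L,R$ obtained for the blocks compose with the unitaries.
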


\begin{proof}
By induction on $n$. 
The base case $n = 1$ is clear (every non-zero $1 \times 1$ pencil is 
unsplittable, hence LR-semisimple).

For the inductive step, if the pencil is unsplittable, it is 
LR-semisimple by definition. If the pencil is splittable, let 
$(\RR, \LL)$ be a splitting pair with 
$\dim \RR = \dim \LL = k$, $1 \leq k \leq n{-}1$. 
By Lemma~\ref{lemma_splitting_DS}, 
$B_i(\RR^\perp) \subseteq \LL^\perp$ for all~$i$.

Choose unitary matrices $R$ and $M$ such that $R$ maps 
$\bC^k \oplus \bC^{n-k}$ to $\RR \oplus \RR^\perp$ and 
$M$ maps $\bC^k \oplus \bC^{n-k}$ to $\LL \oplus \LL^\perp$. 
Setting $L = M^*$, the matrices $L B_i R$ are block diagonal:
\[
L B_i R = \widetilde C_i \oplus \widetilde D_i, 
\qquad 
\widetilde C_i \in M_k(\bC),\quad 
\widetilde D_i \in M_{n-k}(\bC).
\]
Since $L$ and $R$ are unitary, the sub-pencils inherit the DS 
property: from the identities established in the proof of 
Lemma~\ref{lemma_splitting_DS} (with $E_i = 0$),
\begin{alignat*}{2}
\sum_i \widetilde C_i^*\,\widetilde C_i &= I_k, &\qquad 
\sum_i \widetilde C_i\,\widetilde C_i^* &= I_k, \\[4pt]
\sum_i \widetilde D_i^*\,\widetilde D_i &= I_{n-k}, &\qquad 
\sum_i \widetilde D_i\,\widetilde D_i^* &= I_{n-k}.
\end{alignat*}
By the induction hypothesis (applied to the pencils of sizes $k$ 
and $n{-}k$, both strictly less than~$n$), both sub-pencils are 
LR-semisimple. Hence the original pencil is LR-semisimple 
(with $L$ and $R$ providing the LR-equivalence to the 
block-diagonal form).
\end{proof}


\begin{propo}[DS-scalable $\Rightarrow$ LR-semisimple]
\label{propo_DS-scalable_LR-semisimple}
If  CP map $\eta_A: X \to \sum_{i = 1}^r A_i X A_i^\ast$ is DS-scalable then  matrix pencil $A = \sum_{i = 1}^r A_i x_i$ is LR-semisimple.
\end{propo}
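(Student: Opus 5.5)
The plan is to reduce the statement to Proposition~\ref{propo_DS_LR_semisimple} by observing that operator scaling acts on Kraus operators exactly as left--right equivalence acts on pencils. Suppose $\eta_A$ is DS-scalable. Then Definition~\ref{defi_DS_scalable} provides $c_1, c_2 \in GL_n(\bC)$ such that $\Phi := S_{c_1,c_2}(\eta_A)$ is doubly stochastic. Unwinding Definition~\ref{defi_operator_scaling}, we get
\[
\Phi(X) = c_1\,\eta_A(c_2^\ast X c_2)\,c_1^\ast = \sum_{i=1}^r (c_1 A_i c_2^\ast)\,X\,(c_1 A_i c_2^\ast)^\ast .
\]
So $\Phi$ has Kraus operators $B_i := c_1 A_i c_2^\ast$. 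These are obtained from the coefficients of the original pencil $A$ by a fixed left--right transformation with $P = c_1$ and $Q = c_2^\ast$, both invertible.

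Proposition~\ref{propo_DS_LR_semisimple} applies to the doubly stochastic map $\Phi$ with this Kraus representation. It shows that the pencil $B = \sum_i B_i x_i$ is LR-semisimple. Hence there exist $L, R \in GL_n(\bC)$ such that every $L B_i R$ is block diagonal with unsplittable diagonal sub-pencils. Substituting $B_i = c_1 A_i c_2^\ast$ gives $(L c_1)\,A_i\,(c_2^\ast R)$. The matrices $L c_1$ and $c_2^\ast R$ are invertible, and they produce the same block-diagonal unsplittable decomposition for $A$. Therefore $A$ is LR-semisimple by Definition~\ref{defi:LR-semisimple}.

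There is no substantial obstacle. The only point to check is that Proposition~\ref{propo_DS_LR_semisimple} is stated for an arbitrary Kraus representation of a doubly stochastic map, so it may be applied to the specific representation $\{c_1 A_i c_2^\ast\}$ inherited from $A$. We also never need the $A_i$ to be Hermitian, which matches the statement of the proposition.
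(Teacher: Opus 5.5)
Your proof is correct and matches the paper's argument: you scale to a doubly stochastic $\Phi$, read off its Kraus operators as an LR-transform of the $A_i$, apply Proposition~\ref{propo_DS_LR_semisimple}, and compose the two LR-equivalences. Your Kraus operators $c_1 A_i c_2^\ast$ are in fact the accurate ones under Definition~\ref{defi_operator_scaling}, where the paper writes $c_1 A_i c_2$; this makes no difference, since $c_2^\ast$ is invertible either way.
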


\begin{proof}
By hypothesis there exist $c_1, c_2 \in GL_n(\bC)$ such that 
$\Phi := S_{c_1,c_2}(\eta_A)$ is doubly stochastic. The Kraus 
operators of $\Phi$ are $B_i = c_1 A_i c_2$. By 
Proposition~\ref{propo_DS_LR_semisimple}, the pencil 
$\sum B_i\,x_i$ is LR-semisimple. Since $B_i = c_1 A_i c_2$ is 
an LR-equivalence of the original pencil, $\sum A_i\,x_i$ is 
LR-semisimple.

\end{proof}

\subsection{DS-scalable $\Rightarrow$ sym DS-scalable
  (geodesic reflection)} 

The main goal of this section is to show that for self-adjoint CP maps $\eta$, DS-scalability can be upgraded to symmetric DS-scalability. 

Let us first introduce some background material about the geometry of the cone of positively-definite matrices. 

Let $\PP_n = \{C \in M_n(\bC): C > 0\}$. We can identify the tangent space at $C \in \PP_n$ with Hermitian matrices and define the inner product
\[
g_C(H, K) := \Tr(C^{-1}H C^{-1}K). 
\] 
Then $\PP_n$ becomes a Riemannian manifold and the induced geodesic distance has the closed form
\[
d(A, B) = \Big\| \log \Big(A^{-1/2} B A^{-1/2}\Big)\Big\|_F 
= \Bigg( \sum_{j = 1}^n \Big(\log \lambda_j(A^{-1}B)\Big)^2\Big)^{1/2}, 
\]
where $\lambda_j(A^{-1}B)$ are eigenvalues. 

Properties: 
\begin{enumerate}
\item Congruence invariance: for invertible $S$, 
\[
d(S A S^\ast, S B S^\ast) = d(A, B). 
\]
\item Inversion invariance:
\[
d\big(A^{-1}, B^{-1}\big)= d(A, B).
\]
\item Geodesics are explicit:
\[
\gamma_{A \to B}(t) = A^{1/2}\Big(A^{-1/2} B A^{-1/2}\Big)^t A^{1/2}. 
\]
\end{enumerate}

In particular, for the midpoint we have an explicit formula:
\begin{align*}
A \# B & := \gamma_{A \to B}\Big(\frac{1}{2}\Big) = A^{1/2}\Big(A^{-1/2} B A^{-1/2}\Big)^{1/2} A^{1/2}\\
& =  B^{1/2}\Big(B^{-1/2} A B^{-1/2}\Big)^{1/2} B^{1/2}
\end{align*}

\medskip\noindent
\textbf{Setup.}\; Suppose $\eta$ is a self-adjoint CP map 
that is DS-scalable: there exist $C_1, C_2 \succ 0$ such 
that the scaled map $S_{C_1^{1/2},\, C_2^{1/2}}(\eta)$ is 
doubly stochastic. Unwinding the definition, the unitality 
and trace-preservation conditions read
\[
\eta(C_2) = C_1^{-1}, \qquad 
\eta^*(C_1) = C_2^{-1}.
\]
Since $\eta^* = \eta$, the second equation becomes 
$\eta(C_1) = C_2^{-1}$. Defining 
$F(X) := \eta(X)^{-1}$, these two equations say
\[
F(C_2) = C_1, \qquad F(C_1) = C_2,
\]
so $(C_1, C_2)$ is a 2-cycle of $F$ (or a fixed point if 
$C_1 = C_2$, in which case $\eta$ is already symmetrically 
DS-scalable). Our goal is to show that the geodesic midpoint 
$C_1 \mathbin{\#} C_2$ is a fixed point of $F$, which gives 
symmetric DS-scalability.

\begin{lemma}
\label{lem:geodesic}
Let $\eta(X) = \sum B_i X B_i$ with $B_i^\ast = B_i$. If $\eta(I) = P^{-1}$ and $\eta(P) = I$, then $\eta(P^t) = P^{t-1}$ for all $t \in [0,1]$.
\end{lemma}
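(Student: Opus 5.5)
The plan is to rescale $\eta$ into a doubly stochastic map that has $P$ as a fixed point. Lemma~\ref{lemma_unit_eigenstates} then places $P$ in the multiplicative domain, so every function of $P$ is also fixed.

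\emph{Step 0 (positivity of $P$).} Since $\eta$ is CP, $\eta(I)\succeq 0$. The hypothesis $\eta(I)=P^{-1}$ says this matrix is invertible, so $P^{-1}\succ 0$ and hence $P\succ 0$. In particular $P^t$ and $P^{1/2}$ are well defined.

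\emph{Step 1 (the scaled map).} I will define
\[
\Psi(X) := P^{1/2}\,\eta(X)\,P^{1/2} = \sum_i (P^{1/2}B_i)\,X\,(P^{1/2}B_i)^*,
\]
which is $S_{P^{1/2},\,I}(\eta)$ in the notation of Definition~\ref{defi_operator_scaling}, using $B_i^*=B_i$. It is unital, since $\Psi(I)=P^{1/2}P^{-1}P^{1/2}=I$. Its adjoint is $\Psi^*(X)=\sum_i B_iP^{1/2}XP^{1/2}B_i=\eta(P^{1/2}XP^{1/2})$. Hence $\Psi^*(I)=\eta(P)=I$, so $\Psi$ is trace-preserving, and therefore $\Psi$ is doubly stochastic. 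Moreover $\Psi(P)=P^{1/2}\eta(P)P^{1/2}=P$. Thus $P$ is a Hermitian eigenvector of $\Psi$ with eigenvalue $\lambda=1$.

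\emph{Step 2 (multiplicative domain).} By Lemma~\ref{lemma_unit_eigenstates}, $\Psi$ restricts to a $*$-homomorphism on $C^*(P)$, the algebra generated by $P$ and $I$. It follows that $\Psi(P^k)=\Psi(P)^k=P^k$ for every integer $k\ge 0$, so $\Psi(q(P))=q(P)$ for every polynomial $q$. The spectrum of $P$ is a finite subset of $(0,\infty)$. I therefore choose the Lagrange interpolation polynomial $q$ agreeing with $x\mapsto x^t$ on that spectrum, which gives $q(P)=P^t$ by the functional calculus. Consequently $\Psi(P^t)=P^t$.

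\emph{Step 3 (unwinding).} The identity $P^{1/2}\eta(P^t)P^{1/2}=P^t$ gives $\eta(P^t)=P^{-1/2}P^tP^{-1/2}=P^{t-1}$. This holds for every $t\in[0,1]$, and in fact for every real $t$.

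The only nontrivial input is Lemma~\ref{lemma_unit_eigenstates}, which rests on equality in Kadison--Schwarz forced by trace preservation. The step that needs care is the choice of scaling: the one-sided scaling by $P^{1/2}$ simultaneously makes $\Psi$ doubly stochastic and keeps $P$ fixed. Everything else is routine.
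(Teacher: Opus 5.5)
Your proof is correct, and it takes a genuinely different route from the paper.

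The paper bounds $\Phi(t) = P^{(1-t)/2}\eta(P^t)P^{(1-t)/2}$ from both sides.
\begin{itemize}
\item In the eigenbasis of $P$, $\Tr\Phi(t)$ is a nonnegative combination of real exponentials in $t$, hence convex. It equals $n$ at $t=0,1$, so $\Tr\Phi(t)\le n$ on $[0,1]$.
\item The matrix $I$ satisfies the capacity first-order condition, so Lemma~\ref{lemma_exact_fixed_points_capacity} gives $\Capa(\eta)=(\det P)^{-1}$, and hence $\det\Phi(t)\ge 1$.
\item AM--GM then forces $\Phi(t)=I$.
\end{itemize}

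You instead observe that the one-sided scaling $S_{P^{1/2},I}(\eta)$ is already doubly stochastic and fixes $P$. Lemma~\ref{lemma_unit_eigenstates} then places $P$ in its multiplicative domain. This is the $(+1)$-eigenvalue analogue of the argument the paper later uses for the $(-1)$-eigenspace in Proposition~\ref{propo:kernel_tangent_family}.

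What your route buys:
\begin{itemize}
\item It avoids capacity theory entirely. Both proofs ultimately rest on Kadison--Schwarz, but the paper reaches it only indirectly, through Lemma~\ref{lemma_DS_unit_capacity}.
\item It yields a stronger conclusion: $\eta(f(P)) = P^{-1}f(P)$ for every function $f$ on the spectrum of $P$, in particular $\eta(P^t)=P^{t-1}$ for every real $t$.
\item The paper's bound works directly only on $[0,1]$, where convexity bounds the trace from above. Yet Proposition~\ref{prop:geodesic_reflection} quotes the conclusion for all $t\in\mathbb{R}$, and your argument supplies that with no extra continuation step. Only $t=1/2$ is actually needed for Proposition~\ref{propo_LR_C}.
\end{itemize}
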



\begin{proof}
Define
\[
\Phi(t) \;=\; P^{(1-t)/2}\,\eta(P^t)\,P^{(1-t)/2}.
\]
The hypotheses $\eta(I) = P^{-1}$ and $\eta(P) = I$ give 
$\Phi(0) = \Phi(1) = I$.  We show that $\Phi(t) = I$ for 
all $t \in [0,1]$, which is equivalent to the claim.

\medskip\noindent
\textbf{Step~1: Trace upper bound.}
Work in the eigenbasis of $P$, so that 
$P = \mathrm{diag}(\lambda_1, \ldots, \lambda_n)$ with every 
$\lambda_k > 0$.  For each $k$, define $M_k \in M_n(\mathbb{C})$ by
\[
(M_k)_{jl} \;=\; \sum_{i=1}^r (B_i)_{jk}\,\overline{(B_i)_{lk}}.
\]
Each $M_k$ is positive semidefinite (it equals 
$\sum_i b_{i,k}\,b_{i,k}^*$, where $b_{i,k}$ is the $k$-th 
column of~$B_i$).

Since $P^t = \mathrm{diag}(\lambda_1^t, \ldots, \lambda_n^t)$ 
and $B_i^* = B_i$, a direct computation gives
\[
\eta(P^t) \;=\; \sum_{k=1}^n \lambda_k^t\, M_k.
\]
As $P^{(1-t)/2}$ is diagonal with entries $\lambda_j^{(1-t)/2}$, 
the trace of $\Phi(t)$ evaluates to
\[
\mathrm{Tr}[\Phi(t)] 
\;=\; \sum_{j,k} (M_k)_{jj}\;\lambda_j^{1-t}\,\lambda_k^t.
\]
Each summand has the form 
$c_{jk}\,e^{(1-t)\log\lambda_j + t\log\lambda_k}$ 
with $c_{jk} = (M_k)_{jj} \geq 0$.  A non-negative linear 
combination of real exponentials is convex, so 
$t \mapsto \mathrm{Tr}[\Phi(t)]$ is convex on $[0,1]$.  
Since $\mathrm{Tr}[\Phi(0)] = \mathrm{Tr}[\Phi(1)] = n$, 
we conclude
\[
\mathrm{Tr}[\Phi(t)] \;\leq\; n 
\qquad\text{for all } t \in [0,1].
\]

\medskip\noindent
\textbf{Step~2: Determinant lower bound.}
The hypotheses imply that $I$ and $P$ are exact fixed points 
of the map $X \mapsto (\eta(\eta(X)^{-1}))^{-1}$.  
By Lemma~\ref{lemma_exact_fixed_points_capacity},
\[
\Capa(\eta) 
\;=\; \frac{\det(\eta(I))}{\det(I)} 
\;=\; \det(P^{-1}) 
\;=\; (\det P)^{-1}.
\]
The definition of capacity gives $\det(\eta(X)) \geq 
\Capa(\eta) \cdot \det(X)$ for all $X \succ 0$.  
Applying this to $X = P^t$:
\[
\det(\eta(P^t)) 
\;\geq\; (\det P)^{-1} \cdot (\det P)^t 
\;=\; (\det P)^{t-1},
\]
and therefore
\[
\det[\Phi(t)] 
\;=\; (\det P)^{1-t} \cdot \det(\eta(P^t)) 
\;\geq\; 1.
\]

\medskip\noindent
\textbf{Step~3: AM--GM.}
Let $\mu_1, \ldots, \mu_n > 0$ be the eigenvalues 
of~$\Phi(t)$.  By Steps~1 and~2,
\[
\frac{1}{n}\sum_{j=1}^n \mu_j 
\;=\; \frac{\mathrm{Tr}[\Phi(t)]}{n} 
\;\leq\; 1 
\;\leq\; \bigl(\det[\Phi(t)]\bigr)^{1/n} 
\;=\; \Bigl(\prod_{j=1}^n \mu_j\Bigr)^{\!1/n}.
\]
Since the arithmetic mean always dominates the geometric mean, 
both inequalities are equalities.  Equality in AM--GM forces 
$\mu_1 = \cdots = \mu_n$, and since their product is~$1$ 
while their sum is~$n$, every $\mu_j = 1$.  
Hence $\Phi(t) = I$.
\end{proof}

\begin{proposition}[Geodesic reflection for 2-cycles]
\label{prop:geodesic_reflection}
Let $\eta$ be a self-adjoint CP map, 
$\eta(X) = \sum_{i=1}^r A_i X A_i$ with $A_i^\ast = A_i$. Define $F(X) = (\eta(X))^{-1}$ 
for $X \in \mathcal{P}_n$. If $(C, D)$ is a 2-cycle of $F$, i.e., $F(C) = D$ and $F(D) = C$ 
with $C \neq D$, then $F$ reverses the geodesic $\gamma_{C \to D}$, acting as reflection 
about its midpoint $C \# D$:
\[
F(\gamma_{C \to D}(t)) = \gamma_{C \to D}(1-t) \quad \text{for all } t \in \mathbb{R}.
\]
\end{proposition}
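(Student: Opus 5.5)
The plan is to reduce to the normalized situation of Lemma~\ref{lem:geodesic} by a congruence, and then extend from $t\in[0,1]$ to all real $t$. Put $P := C^{-1/2} D C^{-1/2} \succ 0$. Then $\gamma_{C\to D}(t) = C^{1/2} P^t C^{1/2}$.

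The scaled map $\tilde\eta(X) := C^{1/2}\eta(C^{1/2} X C^{1/2})C^{1/2}$ has Kraus operators $B_i = C^{1/2}A_iC^{1/2}$. These are Hermitian, so $\tilde\eta$ is again self-adjoint CP. The 2-cycle conditions $\eta(C)=D^{-1}$ and $\eta(D)=C^{-1}$ translate as follows:
\[
\tilde\eta(I) = C^{1/2}D^{-1}C^{1/2} = P^{-1}, \qquad \tilde\eta(P) = C^{1/2}\eta(D)C^{1/2} = I.
\]
These are exactly the hypotheses of Lemma~\ref{lem:geodesic}, which gives $\tilde\eta(P^t) = P^{t-1}$ for $t\in[0,1]$. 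Undoing the congruence,
\[
\eta(\gamma_{C\to D}(t)) = C^{-1/2}\tilde\eta(P^t)C^{-1/2} = C^{-1/2}P^{t-1}C^{-1/2}.
\]
Inverting gives
\[
F(\gamma_{C\to D}(t)) = C^{1/2}P^{1-t}C^{1/2} = \gamma_{C\to D}(1-t).
\]
This settles the identity on $[0,1]$. In particular, at $t=1/2$ we get $F(C\#D) = C\#D$.

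To pass to all $t\in\mathbb{R}$, I would use analyticity. Both sides of $\tilde\eta(P^t) = P^{t-1}$ extend to entire functions of $t\in\mathbb{C}$. In the eigenbasis of $P$ the left side is $\sum_k \lambda_k^t M_k$, an exponential polynomial in $t$, and the right side is $\mathrm{diag}(\lambda_j^{t-1})$. Two entire matrix-valued functions that agree on $[0,1]$ agree everywhere. So $\tilde\eta(P^t) = P^{t-1}$ for all real $t$, in particular $\tilde\eta(P^t)$ stays invertible, and the congruence computation above goes through verbatim.

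The main obstacle is already handled by Lemma~\ref{lem:geodesic}: its AM--GM squeeze needs convexity on an interval whose endpoints are known, so it only yields $[0,1]$ directly. The extension step is the one point to check carefully, namely that the identity holds as an identity of matrix-valued exponential polynomials, so that the identity theorem applies. The hypothesis $C\neq D$ plays no role in the argument; it only guarantees that the geodesic is nondegenerate.
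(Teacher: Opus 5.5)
Your proof is correct and follows the paper's argument. You use the same congruence with $P = C^{-1/2}DC^{-1/2}$ and $B_i = C^{1/2}A_iC^{1/2}$ to reduce to Lemma~\ref{lem:geodesic}, and you undo it in the same way; your analytic-continuation step is not a formality but fills a real gap. The paper's proof cites Lemma~\ref{lem:geodesic} as giving $\widetilde F(P^t) = P^{1-t}$ for all $t \in \mathbb{R}$, yet that lemma's convexity/AM--GM squeeze between the endpoints $t=0,1$ only yields $t \in [0,1]$. Your identity-theorem argument for the entire function $t \mapsto \widetilde\eta(P^t) - P^{t-1}$ is what justifies the statement as written, including invertibility of $\eta(\gamma_{C\to D}(t))$ outside $[0,1]$.
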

\begin{proof}
Define $P = C^{-1/2} D C^{-1/2}$ and the conjugated CP map
\[
\widetilde{\eta}(X) = \sum_{i=1}^r B_i X B_i, \quad \text{where } B_i = C^{1/2} A_i C^{1/2}.
\]
Note that $B_i^* = B_i$ since $A_i$ and $C^{1/2}$ are both Hermitian. 
Let $\widetilde{F}(X) = (\widetilde{\eta}(X))^{-1}$.

\textbf{Step 1: Verify that $(I, P)$ is a 2-cycle of $\widetilde{F}$.}
We have
\begin{align*}
\widetilde{\eta}(I) &= \sum_i B_i^2 
= \sum_i (C^{1/2} A_i C^{1/2})^2 
= \sum_i C^{1/2} A_i C A_i C^{1/2} 
\\
&= C^{1/2} \eta(C) C^{1/2} = C^{1/2} D^{-1} C^{1/2},
\end{align*}
using $F(C) = D$, i.e., $\eta(C) = D^{-1}$. Therefore
\[
\widetilde{F}(I) = (C^{1/2} D^{-1} C^{1/2})^{-1} = C^{-1/2} D C^{-1/2} = P.
\]
Similarly,
\begin{align*}
\widetilde{\eta}(P) &= \sum_i B_i P B_i 
= \sum_i C^{1/2} A_i C^{1/2} \cdot C^{-1/2} D C^{-1/2} \cdot C^{1/2} A_i C^{1/2} 
\\
&= C^{1/2} \eta(D) C^{1/2} 
= C^{1/2} C^{-1} C^{1/2} = I,
\end{align*}
using $F(D) = C$, i.e., $\eta(D) = C^{-1}$. Therefore $\widetilde{F}(P) = I$.

\textbf{Step 2: Apply Lemma~\ref{lem:geodesic}.}
Since $\widetilde{\eta}$ is self-adjoint and satisfies 
$\widetilde{F}(I) = P$, $\widetilde{F}(P) = I$, Lemma~\ref{lem:geodesic} gives
\[
\widetilde{F}(P^t) = P^{1-t} \quad \text{for all } t \in \mathbb{R}.
\]

\textbf{Step 3: Transfer back to original coordinates.}
The relation between $\widetilde{\eta}$ and $\eta$ is
\[
\widetilde{\eta}(X) = C^{1/2} \eta(C^{1/2} X C^{1/2}) C^{1/2},
\]
which gives $F(C^{1/2} X C^{1/2}) = C^{1/2} \widetilde{F}(X) C^{1/2}$.
Setting $X = P^t$ and using $\gamma_{C \to D}(t) = C^{1/2} P^t C^{1/2}$:
\[
F(\gamma_{C \to D}(t)) = C^{1/2} \widetilde{F}(P^t) C^{1/2} 
= C^{1/2} P^{1-t} C^{1/2} = \gamma_{C \to D}(1-t). \qedhere
\] 
\end{proof}
\begin{remark}
The midpoint of the geodesic is $\gamma_{C \to D}(1/2) = C \# D$, the geometric mean of $C$ and $D$. 
Setting $t = 1/2$ shows that $C \# D$ is a fixed point of $F$:
\[
F(C \# D) = C \# D.
\]
Thus $F$ fixes the midpoint and swaps $C = \gamma_{C \to D}(0)$ and $D = \gamma_{C \to D}(1)$, 
consistent with the 2-cycle condition.  When $C = D$, i.e., $C$ is itself a fixed point, the 
identity holds trivially with $P = I$.
\end{remark}

\begin{propo}
\label{propo_LR_C}
Let $A = \sum_{i=1}^r A_i x_i$, $A_i^* = A_i$, be an LR-semisimple Hermitian matrix pencil.  
Then its covariance map $\eta_A$ is symmetrically DS-scalable.
\end{propo}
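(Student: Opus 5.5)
The plan is to combine the three ingredients already established: LR-semisimple $\Rightarrow$ DS-scalable, the conversion of a DS-scaling of a self-adjoint map into a 2-cycle of $F(X)=\eta(X)^{-1}$, and the geodesic reflection property of $F$.

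First, I invoke Proposition~\ref{propo_LR-semisimple_DS-scalable} to obtain $c_1,c_2\in GL_n(\C)$ such that $\Phi:=S_{c_1,c_2}(\eta_A)$ is doubly stochastic. The Setup paragraph assumes positive definite scalings $C_1^{1/2}, C_2^{1/2}$, so I must reduce to that form.

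\begin{itemize}
\item From $\Phi(I)=I$ we get $c_1\eta_A(c_2^*c_2)c_1^*=I$, so $\eta_A(c_2^*c_2)=(c_1^*c_1)^{-1}$.
\item From $\Phi^*(I)=I$ we get $c_2\eta_A^*(c_1^*c_1)c_2^*=I$, so $\eta_A^*(c_1^*c_1)=(c_2c_2^*)^{-1}$.
\end{itemize}

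The second identity involves $c_2c_2^*$ rather than $c_2^*c_2$, so I need to be careful. Using the polar decomposition $c_2^*=U|c_2^*|$ and the fact that post-composing with unitaries preserves double stochasticity, I replace $c_1$ by $U^*$-adjusted versions. Concretely, $S_{c_1,c_2}(\eta)=S_{V c_1', c_2'}$ with $c_1',c_2'\succ 0$ and $V$ unitary. The unitary $V$ can be dropped, since $X\mapsto V^*XV$ preserves the DS property.

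Similarly, $c_2=W c_2'$ with $c_2'=(c_2^*c_2)^{1/2}\succ0$. Then $c_2^*Xc_2=c_2' W^*XW c_2'$, so $S_{c_1,c_2}(\eta)=S_{c_1,c_2'}(\eta)\circ \mathrm{Ad}_{W^*}$, which is still doubly stochastic. Hence I may take $C_1:=(c_1^*c_1)$-type positive matrices and $C_2\succ0$ with
\[
\eta_A(C_2)=C_1^{-1},\qquad \eta_A(C_1)=C_2^{-1},
\]
where the second equation uses $\eta_A^*=\eta_A$.

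This gives exactly the Setup: $(C_1,C_2)$ is a 2-cycle of $F(X)=\eta_A(X)^{-1}$, or a fixed point if $C_1=C_2$. In the fixed-point case I take $C=C_1$ directly. Otherwise, Proposition~\ref{prop:geodesic_reflection} (with $A_i$ Hermitian, so its hypothesis holds) at $t=1/2$, as in the subsequent remark, gives $F(C_1\# C_2)=C_1\# C_2$. Setting $C:=C_1\#C_2\succ0$ yields $\eta_A(C)=C^{-1}$, which is \eqref{equ_symmetric_scalability}. Therefore $S_{C^{1/2},C^{1/2}}(\eta_A)$ is doubly stochastic.

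The main obstacle is the bookkeeping in reducing arbitrary invertible scalings to positive definite ones. I expect the cleanest route is to note that if $\eta_A(Y)=X^{-1}$ and $\eta_A^*(X)=Y^{-1}$ hold with $X=c_1^*c_1$ and $Y=c_2c_2^*$ (checking the adjoint convention in $S_{c_1,c_2}$), then $C_1=X$, $C_2=Y$ work directly.

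A secondary concern is that Lemma~\ref{lem:geodesic} is stated for $t\in[0,1]$, while the proposition claims all $t\in\mathbb R$. Only $t=1/2$ is needed here, so this causes no issue.
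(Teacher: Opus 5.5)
Your proposal is correct and follows the paper's proof: DS-scalability from Proposition~\ref{propo_LR-semisimple_DS-scalable}, self-adjointness of $\eta_A$ to turn the scaling into a 2-cycle of $F$, and the fixed point $C_1\#C_2$ from Proposition~\ref{prop:geodesic_reflection} at $t=1/2$; you also rightly make explicit the passage to positive definite scalings, which the paper leaves implicit. That passage is easier than you feared, because your formula for the trace-preservation condition has a slip: $\Phi^*(I)=I$ reads $c_2\,\eta_A^*(c_1^*c_1)\,c_2^*=I$, so $\eta_A^*(c_1^*c_1)=(c_2^*c_2)^{-1}$ rather than $(c_2c_2^*)^{-1}$, and therefore $C_1=c_1^*c_1$, $C_2=c_2^*c_2$ work directly without any polar decomposition, whereas your suggested $Y=c_2c_2^*$ would not work.
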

\begin{proof}
By Proposition~\ref{propo_LR-semisimple_DS-scalable}, LR-semisimplicity of $A$ implies that 
$\eta_A$ is DS-scalable: there exist $C_1, C_2 \in \mathcal{P}_n$ such that
\[
\eta_A(C_2) = C_1^{-1}, \qquad \eta_A^*(C_1) = C_2^{-1}.
\]
Since the $A_i$ are Hermitian, $\eta_A$ is self-adjoint, so the second condition becomes 
$\eta_A(C_1) = C_2^{-1}$.  Writing $F(X) = (\eta_A(X))^{-1}$, this gives
\[
F(C_1) = C_2, \qquad F(C_2) = C_1,
\]
so $(C_1, C_2)$ is a 2-cycle of $F$ (or a fixed point if $C_1 = C_2$).  
By Proposition~\ref{prop:geodesic_reflection}, the geometric mean $C_1 \# C_2$ is a fixed 
point of $F$, i.e., $\eta_A(C_1 \# C_2) = (C_1 \# C_2)^{-1}$.  Setting $C = C_1 \# C_2$, 
this is precisely the condition that $\eta_A$ is symmetrically DS-scalable.
\end{proof}

\section{The analytic engine} 
\label{sec:analytic_engine}

This section establishes the forward analytic implication: 
symmetric DS-scalability implies non-singularity at $t = 0$.

\subsection{The solution manifold} 


\begin{propo}[The $(-1)$-eigenspace generates a solution family]
\label{propo:kernel_tangent_family}
Let $\eta\colon M_n(\mathbb{C}) \to M_n(\mathbb{C})$ be a 
self-adjoint CP map with $\eta(C) = C^{-1}$ for some $C \succ 0$, 
and let $\Phi = S_{C^{1/2},C^{1/2}}(\eta)$ be the associated 
doubly stochastic map. Let 
$\mathcal{V} := \ker(\Phi + \mathrm{Id})|_{\mathrm{Herm}_n}$ 
and $d := \dim \mathcal{V}$.
\begin{enumerate}[label=\textup{(\roman*)}]
\item \label{item:family}
The map $\Phi$ restricts to a $*$-homo\-morphism on the 
$C^*$-algebra $C^*(\mathcal{V})$, and the map
\[
\mathcal{V} \ni Y \;\longmapsto\; V_Y := C^{1/2}\,e^{Y}\,C^{1/2}
\]
parametrizes a $d$-dimensional manifold $\mathcal{M}$ of positive 
definite solutions of $\eta(V)\,V = I$, with $V_0 = C$.

\item \label{item:tangent}
The tangent space of $\mathcal{M}$ at $C$ is exactly the kernel 
of the linearization $\mathcal{L}\colon K \mapsto K + C\,\eta(K)\,C$:
\[
T_C\mathcal{M} 
= \bigl\{\,C^{1/2}Y\,C^{1/2} : Y \in \mathcal{V}\,\bigr\} 
= \ker\mathcal{L}.
\]
\end{enumerate}
\end{propo}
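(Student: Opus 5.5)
The plan is to pass to the doubly stochastic normalization and reduce everything to a statement about $\Phi$. Since $\eta(C)=C^{-1}$ and $\eta$ is self-adjoint, $\Phi(X)=C^{1/2}\eta(C^{1/2}XC^{1/2})C^{1/2}$ is a self-adjoint, unital, trace-preserving CP map. For $V=C^{1/2}ZC^{1/2}$ we have $\eta(V)=C^{-1/2}\Phi(Z)C^{-1/2}$. Hence
\[
\eta(V)\,V = C^{-1/2}\,\Phi(Z)\,Z\,C^{1/2},
\]
and so $\eta(V)V=I$ if and only if $\Phi(Z)Z=I$. Likewise, for $K=C^{1/2}YC^{1/2}$ we get $C\eta(K)C=C^{1/2}\Phi(Y)C^{1/2}$, which gives
\[
\mathcal{L}(K)=C^{1/2}\bigl(Y+\Phi(Y)\bigr)C^{1/2}.
\]
Both parts of the proposition therefore become statements about $\Phi$ and its $(-1)$-eigenspace $\mathcal{V}$.

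For part~\ref{item:family}, first apply Lemma~\ref{lemma_unit_eigenstates} to each $Y\in\mathcal{V}$ (here $\lambda=-1$). This shows $\Phi(Y^2)=\Phi(Y)^2$, so every $Y\in\mathcal{V}$ lies in the multiplicative domain of $\Phi$. By Choi's theorem (Theorem~\ref{theo_choi}) the multiplicative domain is a unital $C^*$-subalgebra (recall $\Phi(I)=I$) on which $\Phi$ is a $*$-homomorphism. It therefore contains $C^*(\mathcal{V})$, which gives the first claim. Next, $\Phi$ restricted to $C^*(\mathcal{V})$ is a continuous unital homomorphism, so it commutes with the power series of $\exp$:
\[
\Phi(e^Y)=e^{\Phi(Y)}=e^{-Y}.
\]
Hence $Z=e^Y\succ 0$ satisfies $\Phi(Z)Z=I$, and $V_Y\succ 0$ solves $\eta(V)V=I$, with $V_0=C$. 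The map $Y\mapsto e^Y$ is a diffeomorphism from $\mathrm{Herm}_n$ onto $\PP_n$, and congruence by $C^{1/2}$ is a linear bijection. So $Y\mapsto V_Y$ restricted to the real $d$-dimensional subspace $\mathcal{V}$ is an injective immersion, in fact an embedding, since it is the restriction of a global diffeomorphism to a linear subspace. Its image $\mathcal{M}$ is therefore a $d$-dimensional embedded submanifold.

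For part~\ref{item:tangent}, the differential of $Y\mapsto C^{1/2}e^YC^{1/2}$ at $Y=0$ is $Y\mapsto C^{1/2}YC^{1/2}$. This gives $T_C\mathcal{M}=C^{1/2}\mathcal{V}C^{1/2}$. By the formula for $\mathcal{L}$ above, together with invertibility of $C^{1/2}$, a Hermitian $K=C^{1/2}YC^{1/2}$ lies in $\ker\mathcal{L}$ exactly when $\Phi(Y)=-Y$, that is, when $Y\in\mathcal{V}$. If $\mathcal{L}$ is regarded on all of $M_n(\C)$, one notes that $\mathcal{L}$ commutes with $K\mapsto K^*$ (because $\eta$ preserves adjoints). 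Its kernel then splits into Hermitian and skew-Hermitian parts, and the statement is read on $\mathrm{Herm}_n$, consistent with $\mathcal{V}$ being defined there.

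The only genuinely nontrivial step is passing from the single-element statement of Lemma~\ref{lemma_unit_eigenstates}, which gives a homomorphism on $C^*(Y)$, to the whole algebra $C^*(\mathcal{V})$. That passage depends on the multiplicative domain being an algebra, which is exactly Choi's theorem. I would check carefully that Theorem~\ref{theo_choi} is stated in that form: multiplicativity on both sides, $\Phi(a^*a)=\Phi(a)^*\Phi(a)$ and $\Phi(aa^*)=\Phi(a)\Phi(a)^*$, which for Hermitian $Y$ coincide.
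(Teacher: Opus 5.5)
Your proposal is correct and follows essentially the same route as the paper: Lemma~\ref{lemma_unit_eigenstates} plus Choi's theorem give a $*$-homomorphism on $C^*(\mathcal{V})$; then $\Phi(e^Y)=e^{-Y}$ is transported back by congruence with $C^{1/2}$; and the identity $\mathcal{L}(C^{1/2}YC^{1/2})=C^{1/2}(Y+\Phi(Y))C^{1/2}$ identifies the tangent space with $\ker\mathcal{L}$. The worry in your last paragraph is harmless: the two-sided identities $\Phi(XH)=\Phi(X)\Phi(H)$ and $\Phi(HX)=\Phi(H)\Phi(X)$ from Theorem~\ref{theo_choi} are preserved under sums, products and adjoints, which is how the paper passes from the generators $Y_j$ to all of $C^*(\mathcal{V})$ (and your global-diffeomorphism argument for the embedding is slightly cleaner than the paper's).
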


\begin{proof}
\ref{item:family}\;
Let $Y_1, \ldots, Y_d$ be a basis of $\mathcal{V}$. By 
Lemma~\ref{lemma_unit_eigenstates}, each $Y_j$ satisfies 
$\Phi(Y_j^2) = \Phi(Y_j)^2$. By Theorem~\ref{theo_choi}, 
this gives $\Phi(X Y_j) = \Phi(X)\,\Phi(Y_j)$ and 
$\Phi(Y_j X) = \Phi(Y_j)\,\Phi(X)$ for all 
$X \in M_n(\mathbb{C})$ and every~$j$. In particular, $\Phi$ 
is multiplicative on all products of elements from 
$\{Y_1, \ldots, Y_d\}$, so $\Phi$ restricts to a 
$*$-homo\-morphism on $C^*(\mathcal{V})$.

 Since for any $Y \in \mathcal{V}$, the exponential $e^Y$ lies in 
$C^*(\mathcal{V})$, so
\[
\Phi(e^Y) = e^{\Phi(Y)} = e^{-Y} = (e^Y)^{-1}.
\]
Set \(X=e^Y\) and \(V=C^{1/2}XC^{1/2}\). Then
\[
\Phi(X)=C^{1/2}\eta(C^{1/2}XC^{1/2})C^{1/2}
      =C^{1/2}\eta(V)C^{1/2},
\]
so
\[
\eta(V)=C^{-1/2}\Phi(X)C^{-1/2}
       =C^{-1/2}X^{-1}C^{-1/2}
       =V^{-1}.
\]
Hence \(\eta(V)\,V=I\).

Since \(\mathcal V\subset \Herm_n\) is a real vector space of dimension \(d\), the map
\[
\mathcal V\ni Y \longmapsto C^{1/2}e^Y C^{1/2}
\]
is a real-analytic embedding (the exponential map is injective on Hermitian matrices and maps \(\Herm_n\) into \(M_n(\mathbb C)_{>0}\)); therefore its image \(\mathcal M\) is a \(d\)-dimensional real-analytic embedded submanifold of \(M_n(\mathbb C)_{>0}\).

\ref{item:tangent}\;
Differentiating $V_{tY} = C^{1/2}e^{tY}C^{1/2}$ at $t = 0$ 
gives $\frac{d}{dt}\big|_{t=0} V_{tY} = C^{1/2}YC^{1/2}$. 
This spans $\ker\mathcal{L}$ as $Y$ ranges over $\mathcal{V}$, 
since $\mathcal{L}(C^{1/2}YC^{1/2}) = C^{1/2}(Y + \Phi(Y))C^{1/2} = 0$ 
if and only if $Y \in \mathcal{V}$.
\end{proof}

\subsection{Lyapunov--Schmidt reduction} 


\begin{lemma}[Cokernel of the linearization operator]
\label{lem:cokernel}
Let $\eta\colon M_n(\mathbb{C}) \to M_n(\mathbb{C})$ be a 
self-adjoint CP map with $\eta(C) = C^{-1}$ for some $C \succ 0$, 
and let $\Phi = S_{C^{1/2},C^{1/2}}(\eta)$. Then the adjoint of 
the linearization operator 
$\mathcal{L}(K) = K + C\,\eta(K)\,C$ with respect to the real inner product 
$\langle A, B \rangle = \operatorname{Tr}(AB)$ on $\Herm_n$ is
\[
\mathcal{L}^*(L) = L + \eta(CLC),
\]
and
\[
\ker\mathcal{L}^* 
= \bigl\{\, C^{-1/2}Y\,C^{-1/2} : 
  Y \in \ker(\Phi + \mathrm{Id})|_{\mathrm{Herm}_n}\,\bigr\}.
\]
\end{lemma}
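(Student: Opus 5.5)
The plan has two parts: first compute the adjoint directly from cyclicity of the trace and the self-adjointness of $\eta$, then identify the kernel by conjugating with $C^{1/2}$ and reducing to the eigenspace of $\Phi$.

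\textbf{Computing the adjoint.} For Hermitian $K, L$ we write
\[
\langle \mathcal{L}(K), L\rangle = \Tr(KL) + \Tr\bigl(C\eta(K)CL\bigr) = \Tr(KL) + \Tr\bigl(\eta(K)\,CLC\bigr).
\]
Since $\eta$ is self-adjoint for the Hilbert--Schmidt inner product, $\Tr(\eta(K)M) = \Tr(K\eta(M))$ for Hermitian $M$; here we take $M = CLC$. This gives
\[
\langle \mathcal{L}(K), L\rangle = \Tr\bigl(K\,(L + \eta(CLC))\bigr).
\]
Note that $\eta$ maps $\Herm_n$ to itself, because it has Hermitian Kraus operators. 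Hence $L + \eta(CLC)$ is Hermitian, and $\mathcal{L}^*(L) = L + \eta(CLC)$.

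\textbf{Identifying the kernel.} We substitute $L = C^{-1/2} Y C^{-1/2}$. This is a real-linear bijection of $\Herm_n$, with $CLC = C^{1/2} Y C^{1/2}$. We then conjugate $\mathcal{L}^*(L)$ by $C^{1/2}$:
\[
C^{1/2}\mathcal{L}^*(L)\,C^{1/2} = Y + C^{1/2}\eta\bigl(C^{1/2} Y C^{1/2}\bigr)C^{1/2} = Y + \Phi(Y).
\]
Conjugation by the invertible matrix $C^{1/2}$ preserves vanishing. Therefore $\mathcal{L}^*(L) = 0$ if and only if $\Phi(Y) = -Y$, i.e. $Y \in \ker(\Phi + \mathrm{Id})|_{\Herm_n}$, which is the claimed description of $\ker\mathcal{L}^*$.

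As a cross-check, the same substitution with $C^{1/2}$ in place of $C^{-1/2}$ reproduces $\ker\mathcal{L}$ from Proposition~\ref{propo:kernel_tangent_family}\ref{item:tangent}. The two kernels thus share the parameter space $\mathcal{V}$ but differ by $C^{\pm 1/2}$-conjugation, as expected because $\mathcal{L}$ is self-adjoint for the metric $g_C$ rather than for the flat one.

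There is no real obstacle here. The only points needing care are that the pairing is the real trace pairing on $\Herm_n$, and that the self-adjointness identity $\Tr(\eta(K)M) = \Tr(K\eta(M))$ is applied to Hermitian arguments. The latter follows from $\eta = \eta^*$ together with $\langle X, Y\rangle = \Tr(XY^*)$ reducing to $\Tr(XY)$ when $Y$ is Hermitian.
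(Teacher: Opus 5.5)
Your proposal is correct and follows essentially the same route as the paper: you get the adjoint from cyclicity of the trace and $\eta^* = \eta$, and then the substitution $L = C^{-1/2}YC^{-1/2}$ reduces $\mathcal{L}^*(L) = 0$ to $\Phi(Y) = -Y$. Conjugating by $C^{1/2}$, as you do, is equivalent to the paper's factorization $\mathcal{L}^*(L) = C^{-1/2}\bigl(Y + \Phi(Y)\bigr)C^{-1/2}$.
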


\begin{proof}
For Hermitian $K$ and $L$:
\[
\langle L,\, \mathcal{L}(K) \rangle 
= \operatorname{Tr}(LK) 
  + \operatorname{Tr}(LC\,\eta(K)\,C)
= \operatorname{Tr}(LK) 
  + \operatorname{Tr}(CLC \cdot \eta(K)),
\]
where the second equality uses cyclicity of the trace. Since 
$\eta$ is self-adjoint ($\eta^* = \eta$, which holds because the 
Kraus operators are Hermitian), 
$\operatorname{Tr}(CLC \cdot \eta(K)) 
= \operatorname{Tr}(\eta(CLC) \cdot K)$. Therefore
\[
\langle L,\, \mathcal{L}(K) \rangle 
= \operatorname{Tr}\!\big((L + \eta(CLC))\,K\big) 
= \langle \mathcal{L}^*(L),\, K \rangle,
\]
giving $\mathcal{L}^*(L) = L + \eta(CLC)$.

To find $\ker\mathcal{L}^*$, substitute $L = C^{-1/2}ZC^{-1/2}$ 
with $Z$ Hermitian:
\begin{align*}
\mathcal{L}^*(L) 
&= C^{-1/2}ZC^{-1/2} 
  + \eta\!\big(C \cdot C^{-1/2}ZC^{-1/2} \cdot C\big) \\
&= C^{-1/2}ZC^{-1/2} 
  + \eta\!\big(C^{1/2}ZC^{1/2}\big).
\end{align*}
Using $\eta(C^{1/2}ZC^{1/2}) 
= C^{-1/2}\Phi(Z)\,C^{-1/2}$ (from the definition of $\Phi$):
\[
\mathcal{L}^*(L) 
= C^{-1/2}\!\big(Z + \Phi(Z)\big)\,C^{-1/2}.
\]
Since $C^{-1/2}$ is invertible, $\mathcal{L}^*(L) = 0$ if and 
only if $\Phi(Z) = -Z$.
\end{proof}


\medskip\noindent
\textbf{Lyapunov--Schmidt decomposition.}\;
Consider the equation 
\[
\Psi(W, u) := W - (\eta(W) + uI)^{-1} = 0
\]
on $\operatorname{Herm}_n \times \mathbb{R}$, with 
$\Psi(C, 0) = 0$. The Fr\'echet derivative at $(C, 0)$ is 
$D_W\Psi|_{(C,0)} = \LL$, the linearization operator 
$\LL(K) = K + C\,\eta(K)\,C$. 

Assume that 
$\dim\ker\LL = d \geq 1$. Let $\{Y_1, \ldots, Y_d\}$ be a 
basis of $\mathcal{V} := \ker(\Phi + \Id)|_{\Herm_n}$. By 
Proposition~\ref{propo:kernel_tangent_family}, 
$\ker\LL = \operatorname{span}\{K_{0,1}, \ldots, K_{0,d}\}$ 
with $K_{0,k} = C^{1/2}Y_k C^{1/2}$. By 
Lemma~\ref{lem:cokernel}, 
$\ker\LL^* = \operatorname{span}\{L_{0,1}, \ldots, L_{0,d}\}$ 
with $L_{0,j} = C^{-1/2}Y_j C^{-1/2}$.

Write $W = C + \sum_k s_k K_{0,k} + \tilde{K}$ with 
$\tilde{K} \perp \operatorname{span}\{K_{0,k}\}$ in 
$\operatorname{Herm}_n$. Since $\LL$ is invertible on 
$(\ker\LL)^{\perp}$, the implicit function theorem solves the 
\emph{auxiliary equation} (the projection of $\Psi = 0$ onto 
$(\ker\LL^*)^{\perp}$) for 
$\tilde{K} = \tilde{K}(\mathbf{s}, u)$, smooth near the origin. 

The \emph{bifurcation equations} are the remaining $d$ scalar 
equations
\[
\phi_j(\mathbf{s}, u) 
:= \left\langle L_{0,j},\, 
   \Psi\Big(C + {\textstyle\sum_k} s_k K_{0,k} 
   + \tilde{K}(\mathbf{s},u),\, u\Big) \right\rangle = 0, 
\qquad j = 1, \ldots, d.
\]

\medskip\noindent
\textbf{Hadamard factorization.}\;
Each bifurcation equation $\phi_j(\mathbf{s}, u) = 0$ vanishes 
identically at $u = 0$ (since the family 
$V_\mathbf{t} = C^{1/2}\exp(\sum_k t_k Y_k)\,C^{1/2}$ provides 
solutions of $\Psi = 0$ at $u = 0$ that sweep out a neighborhood 
in the $\ker\LL$-directions). This gives the factorization 
$\phi_j(\mathbf{s}, u) = u\,\psi_j(\mathbf{s}, u)$ with smooth 
$\psi_j$, for each $j = 1, \ldots, d$. (This is the Hadamard 
lemma applied to each $\phi_j$: if $f(x, 0) = 0$ for all $x$ 
near $0$ and $f$ is smooth, then $f(x, y) = y \cdot g(x, y)$ 
for a smooth $g$, given explicitly by 
$g(x, y) = \int_0^1 \partial_y f(x, ty)\,dt$.)

\subsection{Trace minimizer and transversality} 
\begin{propo}[Trace minimizer in $\bS$]
\label{propo:trace_minimizer}
Let $\eta\colon M_n(\bC) \to M_n(\bC)$ be a self-adjoint CP 
map with $\bS := \{W \succ 0 : \eta(W)\,W = I\} \neq 
\emptyset$. Then:
\begin{enumerate}[label=\textup{(\roman*)}]
\item \textup{(Existence)} $\bS$ has an element of minimal 
  trace: there exists $C \in \bS$ with 
  $\tr(C) \leq \tr(W)$ for all $W \in \bS$.
\item \textup{(First-order conditions)} Let 
  $\Phi = S_{C^{1/2},C^{1/2}}(\eta)$ and 
  $\VV = \ker(\Phi + \Id)|_{\Herm_n}$ with basis 
  $\{Y_1, \ldots, Y_d\}$. Then
  \[
  \Tr(Y_j\,C) = 0 \qquad \text{for all } 
  j = 1, \ldots, d.
  \]
\end{enumerate}
\end{propo}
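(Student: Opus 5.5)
For (i), I will first show that $\bS$ is closed in $\PP_n$ and that its elements have bounded trace and are uniformly bounded away from the boundary on sublevel sets of the trace; then minimizing $\tr$ over the compact set $\bS\cap\{\tr W\le \tr W_0\}$ gives a minimizer.

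\textbf{Upper bound.} Closedness within $\PP_n$ is clear, and the sublevel set is bounded because $\tr W\ge\|W\|$.

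\textbf{Lower bound.} The key point is that eigenvalues of $W\in\bS$ cannot approach $0$. I would use the symmetric DS structure. Fix $W_0\in\bS$. Then $\eta$ is DS-scalable, hence rank non-decreasing, so $\Capa(\eta)>0$ by Proposition~\ref{propo_RN_map_capacity}. For any $W\in\bS$ we have $\eta(\eta(W)^{-1})=\eta(W)^{-1}... $; more precisely $\eta(W)=W^{-1}$ implies $\eta^*(\eta(W)^{-1})=\eta(W)=W^{-1}$. Lemma~\ref{lemma_exact_fixed_points_capacity} therefore gives
\[
\Capa(\eta)=\det(\eta(W))/\det W=\det(W)^{-2},
\]
so $\det W=\Capa(\eta)^{-1/2}$ is constant on $\bS$.

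Combined with $\lambda_{\max}(W)\le \tr W\le \tr W_0$ on the sublevel set, the fixed determinant forces
\[
\lambda_{\min}(W)\ \ge\ \Capa(\eta)^{-1/2}/(\tr W_0)^{n-1}>0.
\]
Hence the sublevel set is a closed, bounded subset of $\{W\succeq \epsilon I\}$, so it is compact. The trace is continuous and attains its minimum at some $C$, which proves (i).

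For (ii), I use the solution manifold of Proposition~\ref{propo:kernel_tangent_family}. Since $C\in\bS$, $\eta(C)=C^{-1}$, so $\Phi$ is doubly stochastic. For each $Y\in\VV$, the curve $t\mapsto V_{tY}=C^{1/2}e^{tY}C^{1/2}$ lies in $\bS$ and passes through $C$ at $t=0$. Then $g(t)=\tr V_{tY}$ is smooth with a minimum at $t=0$, so
\[
0=g'(0)=\tr(C^{1/2}YC^{1/2})=\tfrac1n\Tr(YC).
\]
Applying this to $Y=Y_j$ gives $\Tr(Y_jC)=0$.

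The main obstacle is the coercivity bound in (i), i.e.\ excluding degeneration toward singular matrices. The constant-determinant identity via capacity handles it cleanly. If the identity $\eta^*(\eta(W)^{-1})=W^{-1}$ needs more care, I would verify it directly from self-adjointness: $\eta(W)^{-1}=W$, so $\eta^*(W)=\eta(W)=W^{-1}$.
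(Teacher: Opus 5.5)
Your proof is correct. Part (ii) is the paper's argument: differentiate the trace along the curve $C^{1/2}e^{tY_j}C^{1/2}\in\bS$.

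For the coercivity step in part (i) you take a genuinely different route. The paper never invokes capacity there. Since $\eta$ is positive and $W\preceq\lambda_{\max}(W)I$, it gets
\[
\|W^{-1}\|=\|\eta(W)\|\le\lambda_{\max}(W)\,\|\eta(I)\|,
\]
so $\lambda_{\min}(W)\ge 1/(\lambda_{\max}(W)\|\eta(I)\|)$ on any trace-bounded part of $\bS$. That bound uses only positivity of $\eta$, not even self-adjointness.

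You instead use self-adjointness to verify the capacity first-order condition $\eta^*(\eta(W)^{-1})=W^{-1}$ for every $W\in\bS$. Lemma~\ref{lemma_exact_fixed_points_capacity} then gives $\det W=\Capa(\eta)^{-1/2}$ on all of $\bS$, and you conclude with $\lambda_{\min}(W)\ge\det W/\lambda_{\max}(W)^{n-1}$. This is heavier, since it rests on Lemma~\ref{lemma_DS_unit_capacity} and the GGOW scaling formula. In return it yields a global fact the paper records only along the family $V_{\sv}$: the determinant is constant on all of $\bS$. Consequently the AM--GM bound $\tr(W)\ge\det(W)^{1/n}=\Capa(\eta)^{-1/(2n)}$ from Theorem~\ref{theo:nonsingularity}(iii) holds for every element of $\bS$, not only the minimizer.

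Two minor points:
\begin{itemize}
\item Your step ``DS-scalable $\Rightarrow$ rank non-decreasing $\Rightarrow$ $\Capa(\eta)>0$'' is unnecessary. The identity $\Capa(\eta)=\det(W)^{-2}$ already gives positivity.
\item The paper's $\tr$ is normalized, so the correct inequality is $\lambda_{\max}(W)\le\Tr W=n\,\tr W$, not $\lambda_{\max}(W)\le\tr W$. This only changes constants.
\end{itemize}
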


\begin{proof}
\textbf{(i).}\; Let $\{W_k\} \subset \bS$ be a minimizing 
sequence with 
$\tr(W_k) \to \inf_{W \in \bS}\tr(W)$. Since 
$\lambda_{\max}(W_k) \leq \Tr(W_k) \leq M$ and 
\[
\lambda_{\min}(W_k) \geq 1/(M\,\|\eta(I)\|) > 0
\]
(using $\|W_k^{-1}\| = \|\eta(W_k)\| \leq M\|\eta(I)\|$), 
a subsequence converges to some $C$ with $C \succ 0$ and 
$\eta(C)\,C = I$.

\medskip\noindent
\textbf{(ii).}\; By 
Proposition~\ref{propo:kernel_tangent_family}, 
$V_s := C^{1/2}\,e^{sY_j}\,C^{1/2} \in \bS$ for all 
$s \in \bR$. Since $C$ minimizes trace over~$\bS$:
\[
\frac{d}{ds}\bigg|_{s=0} \Tr(C\,e^{sY_j}) 
= \Tr(C\,Y_j) = 0. \qedhere
\]
\end{proof}


\begin{lemma}[ Solvability of bifurcation equations]
\label{lem:transversality_general}
$\,$ \\
Let $\eta\colon M_n(\bC) \to M_n(\bC)$ be a self-adjoint CP map 
with $\eta(C) = C^{-1}$ for some $C \succ 0$, and let 
$\Phi = S_{C^{1/2},C^{1/2}}(\eta)$. Let 
$\mathcal{V} = \ker(\Phi + \Id)|_{\Herm_n}$ with 
$d = \dim\mathcal{V} \geq 1$, and let $\{Y_1, \ldots, Y_d\}$ 
be an orthogonal basis of $\mathcal{V}$. Suppose that 
\begin{equation}\label{eq:FOC_general}
\Tr(Y_j C) = 0 \qquad \text{for all } j = 1, \ldots, d.
\end{equation}
Let $\phi_j(\mathbf{s}, u) = u\,\psi_j(\mathbf{s}, u)$ be the 
Hadamard factorization of the Lyapunov--Schmidt bifurcation 
equations at $(C, 0)$. Then:
\begin{enumerate}[label=\textup{(\roman*)}]
\item $\psi_j(0, 0) = 0$ for all $j$.
\item The Jacobian 
\[
\dfrac{\partial\psi_j}{\partial s_k}\bigg|_{(0,0)} 
= \frac{1}{2}\big[\Tr(Y_j C Y_k) + \Tr(Y_k C Y_j) \big],
\]
 and this matrix is positive definite.
\end{enumerate}
In particular, the implicit function theorem applies to the 
system $\psi_j(\mathbf{s}, u) = 0$ at $(\mathbf{s}, u) = (0, 0)$, 
giving a smooth curve $\mathbf{s}(u)$ with $\mathbf{s}(0) = 0$.
\end{lemma}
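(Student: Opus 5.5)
The plan is to compute the $u$-derivative of the bifurcation equations directly, exploiting two facts. First, $L_{0,j}\in\ker\LL^*$ annihilates everything in the range of $\LL$. Second, the family $\mathcal M$ of Proposition~\ref{propo:kernel_tangent_family} sits inside the zero set at $u=0$. Since $\psi_j(\mathbf s,0)=\partial_u\phi_j(\mathbf s,0)$, both (i) and (ii) reduce to evaluating $\partial_u\phi_j$ at $u=0$ and differentiating once in $s_k$.

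\textbf{Step 1 (part (i)).} Write $W(\mathbf s,u)=C+\sum_k s_kK_{0,k}+\tilde K(\mathbf s,u)$. By the chain rule,
\[
\partial_u\phi_j(\mathbf s,u)=\bigl\langle L_{0,j},\,(\partial_u\Psi)(W,u)+D_W\Psi(W,u)\,\partial_u\tilde K\bigr\rangle .
\]
Since $\partial_u\big[(\eta(W)+uI)^{-1}\big]=-(\eta(W)+uI)^{-2}$, we get $\partial_u\Psi=(\eta(W)+uI)^{-2}$, which equals $C^2$ at $(C,0)$ because $\eta(C)=C^{-1}$. At $(0,0)$ we have $D_W\Psi=\LL$, and $\langle L_{0,j},\LL(\cdot)\rangle=0$, so the second term drops. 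This gives
\[
\psi_j(0,0)=\Tr\bigl(C^{-1/2}Y_jC^{-1/2}C^2\bigr)=\Tr(Y_jC)=0
\]
by \eqref{eq:FOC_general}.

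As a by-product, $C^2$ has no component along $\ker\LL^*$. Hence the $u$-derivative of the auxiliary equation gives exactly $\LL(\partial_u\tilde K(0,0))=-C^2$, not merely a projected version. Call this $\tilde K_u:=\partial_u\tilde K(0,0)$.

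\textbf{Step 2 (Jacobian).} Differentiate $\partial_u\phi_j(\mathbf s,0)$ in $s_k$ at $\mathbf s=0$. Since $\tilde K(\mathbf s,0)$ is quadratic in $\mathbf s$ (the family $\mathcal M$ is tangent to $\ker\LL$), we have $\partial_{s_k}W|_{(0,0)}=K_{0,k}$. This yields three contributions.
\begin{itemize}
\item[(a)] The derivative of $\eta(W)^{-2}$ along $K_{0,k}$, namely $-\big(C\,\eta(K_{0,k})\,C^2+C^2\,\eta(K_{0,k})\,C\big)$. Using $C\eta(K_{0,k})C=-K_{0,k}$ (since $K_{0,k}\in\ker\LL$), this becomes $CK_{0,k}+K_{0,k}C$.
\item[(b)] The term $\langle L_{0,j},D^2_W\Psi[K_{0,k},\tilde K_u]\rangle$, where $D^2_W\Psi[H,K]=-C\eta(H)C\eta(K)C-C\eta(K)C\eta(H)C$.
\item[(c)] The term $\langle L_{0,j},\LL\,\partial_{s_k}\partial_u\tilde K\rangle$, which vanishes.
\end{itemize}
In the normalized variables $K=C^{1/2}ZC^{1/2}$ we have $C\eta(K)C=C^{1/2}\Phi(Z)C^{1/2}$ and $L_{0,j}=C^{-1/2}Y_jC^{-1/2}$. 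Moreover $\tilde K_u=C^{1/2}Z_uC^{1/2}$ with $Z_u+\Phi(Z_u)=-C$. With these substitutions every term becomes a trace of products of $Y_j$, $Y_k$, $C$, $Z_u$ and $\Phi(\cdot)$.

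\textbf{Step 3 (the main obstacle).} The difficulty is to eliminate the unknown $Z_u$ from term (b). For this I will use the multiplicative domain property from Lemma~\ref{lemma_unit_eigenstates} and Theorem~\ref{theo_choi}: $\Phi(Y_kX)=-Y_k\Phi(X)$ and $\Phi(XY_k)=-\Phi(X)Y_k$ for all $X$. Combined with self-adjointness of $\Phi$ and the relation $\Phi(Z_u)=-C-Z_u$, this should let me rewrite terms such as $\Tr\big(Y_j\Phi(Y_k)\Phi(Z_u)\big)$ as $\Tr\big(Y_j Y_k(C+Z_u)\big)$. The $Z_u$-terms should then cancel, by moving $\Phi$ across the trace via $\Tr(\Phi(A)B)=\Tr(A\Phi(B))$ and using $\Phi(Y_j)=-Y_j$.

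What should remain from (a) and (b) is $\tfrac12[\Tr(Y_jCY_k)+\Tr(Y_kCY_j)]$, possibly after using $\Tr(Y_jY_kC)=\Tr(Y_kY_jC)$-type identities that follow from multiplicativity and trace preservation. If the direct cancellation fails, my fallback is to reparametrize the kernel directions along $\mathcal M$ as $C^{1/2}e^{Y}C^{1/2}$. That choice makes $\partial_u\Psi$ at $u=0$ equal to $V^2$ exactly along the manifold, and it can absorb the $\tilde K_u$ correction into a cleaner second-order term.

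\textbf{Step 4 (positivity and conclusion).} For $a\in\R^d\setminus\{0\}$, set $Y=\sum_ka_kY_k$, which is nonzero because the $Y_k$ are a basis. The quadratic form is $\sum_{j,k}a_ja_k\,\tfrac12[\Tr(Y_jCY_k)+\Tr(Y_kCY_j)]=\Tr(YCY)=\Tr\big((C^{1/2}Y)^*(C^{1/2}Y)\big)>0$, since $C\succ0$. So the Jacobian is positive definite, hence invertible. The implicit function theorem applied to $\psi(\mathbf s,u)=0$ at $(0,0)$ then gives the smooth curve $\mathbf s(u)$ with $\mathbf s(0)=0$.
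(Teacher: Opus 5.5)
Your route is the paper's own. You use the same Lyapunov--Schmidt setup, the same proof of (i) via $\langle L_{0,j},C^2\rangle=\Tr(Y_jC)$, and the same upgrade of the auxiliary equation to $\LL(\tilde K_u)=-C^2$. You split $\partial_{s_k}\psi_j(0,0)$ into (a), (b), (c) exactly as the paper splits it into $\text{(II)}_{jk}$, $\text{(I)}_{jk}$ and the vanishing $\LL$-term, and your positivity argument is the same. The gap is Step~3, which is the entire content of (ii). You name the right tools, but you only say what ``should'' happen, hedge with a fallback, and never actually eliminate $Z_u$. As written, the Jacobian formula is not proved.

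What is missing is the paper's key trace identity (its Step~4). In your normalization, $\Phi(Y_k)=-Y_k$ gives
\[
\text{(b)}=-\Tr\bigl(Y_j[\Phi(Y_k)\Phi(Z_u)+\Phi(Z_u)\Phi(Y_k)]\bigr)=\Tr\bigl(\{Y_j,Y_k\}\,\Phi(Z_u)\bigr),\qquad \{Y_j,Y_k\}:=Y_jY_k+Y_kY_j .
\]
Because $\Phi$ has Hermitian Kraus operators, $\Tr(X\Phi(Z))=\Tr(\Phi(X)Z)$. The multiplicative domain gives $\Phi(\{Y_j,Y_k\})=\{Y_j,Y_k\}$, hence $\text{(b)}=\Tr(\{Y_j,Y_k\}Z_u)$. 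Substituting $\Phi(Z_u)=-C-Z_u$ instead gives $\text{(b)}=-\Tr(\{Y_j,Y_k\}C)-\Tr(\{Y_j,Y_k\}Z_u)$. Adding the two expressions eliminates $Z_u$ and yields $\text{(b)}=-\tfrac12\Tr(\{Y_j,Y_k\}C)$. This is the paper's identity $\Tr(\hat Q_{jk}\tilde K_u)=-\tfrac12\Tr(Y_jY_kC)$, which it obtains by pairing $\LL(\tilde K_u)=-C^2$ with $\hat Q_{jk}=C^{-1/2}Y_jY_kC^{-1/2}$ and using $\eta(C\hat Q_{jk}C)=\hat Q_{jk}$.

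So term (b), which is purely a $Z_u$-term, is not negligible. It cancels exactly half of $\text{(a)}=\Tr(\{Y_j,Y_k\}C)$, and that is the source of the factor $\tfrac12$. With this two-line computation the fallback reparametrization is unnecessary.

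The hedge via ``$\Tr(Y_jY_kC)=\Tr(Y_kY_jC)$-type identities'' is also unnecessary, and those identities are false in general. Take $\eta(X)=AXA$ with $A=C^{-1/2}BC^{-1/2}$, $B=\diag(1,-1)$, $C=\diag(c_1,c_2)$. Then the hypotheses hold with $\Phi(X)=BXB$. For $Y_1=\begin{pmatrix}0&1\\1&0\end{pmatrix}$ and $Y_2=\begin{pmatrix}0&-i\\ i&0\end{pmatrix}$ one gets $\Tr(Y_1Y_2C)=i(c_1-c_2)=-\Tr(Y_2Y_1C)$. The Jacobian must therefore be kept in symmetrized form, as in the statement.
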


\begin{proof}
We use the notation
\[
K_{0,k} := C^{1/2}Y_k C^{1/2} \in \ker\LL, \qquad 
L_{0,j} := C^{-1/2}Y_j C^{-1/2} \in \ker\LL^*
\]
(Lemma~\ref{lem:cokernel}). The Lyapunov--Schmidt decomposition 
writes $W = C + \sum_k s_k K_{0,k} + \tilde{K}$ with 
$\tilde{K} \perp \operatorname{span}\{K_{0,k}\}$, and the 
bifurcation equations are
\[
\phi_j(\mathbf{s}, u) = \Tr\!\big(L_{0,j} \cdot 
\Psi(C + {\textstyle\sum_k} s_k K_{0,k} 
+ \tilde{K}(\mathbf{s}, u),\, u)\big).
\]
We record three identities used throughout. Define 
$\hat{Q}_{jk} := C^{-1/2}Y_j Y_k C^{-1/2}$.


\medskip\noindent
\textbf{Identity 1:} $C\,\eta(K_{0,k})\,C = -K_{0,k}$.

\noindent\textit{Proof.}\; 
$\eta(K_{0,k}) = C^{-1/2}\Phi(Y_k)C^{-1/2} = -L_{0,k}$, so 
$C\,\eta(K_{0,k})\,C = -C\,L_{0,k}\,C = -K_{0,k}$.

\medskip\noindent
\textbf{Identity 2:} $\Phi(Y_j Y_k) = Y_j Y_k$ for all $j, k$.

\noindent\textit{Proof.}\; Since $\Phi(Y_\ell) = -Y_\ell$ with 
$|-1| = 1$ for each $\ell$, the multiplicative domain theorem 
gives that $\Phi$ restricts to a $*$-homomorphism on 
$C^*(Y_1, \ldots, Y_d)$. Therefore 
$\Phi(Y_j Y_k) = \Phi(Y_j)\Phi(Y_k) = (-Y_j)(-Y_k) = Y_j Y_k$.

\medskip\noindent
\textbf{Identity 3:} $\eta(C\hat{Q}_{jk}C) = \hat{Q}_{jk}$.

\noindent\textit{Proof.}\; 
$C\hat{Q}_{jk}C = C^{1/2}Y_j Y_k C^{1/2}$, so 
$\eta(C\hat{Q}_{jk}C) = C^{-1/2}\Phi(Y_j Y_k)C^{-1/2} 
= C^{-1/2}Y_j Y_k C^{-1/2} = \hat{Q}_{jk}$ by Identity~2.


\bigskip\noindent
\textbf{Part (i): $\psi_j(0,0) = 0$.}\;
By definition, $\psi_j(0,0) = (\phi_j)_u(0,0)$. We have
\[
\phi_j(\mathbf{s},u) = \Tr\!\big(L_{0,j} \cdot 
\Psi(C + {\textstyle\sum_k} s_k K_{0,k} 
+ \tilde{K}(\mathbf{s},u),\, u)\big).
\]
Differentiating in $u$ at $(\mathbf{s},u) = (\mathbf{s},0)$ 
by the chain rule:
\[
(\phi_j)_u(\mathbf{s},0) = 
\Tr\!\big(L_{0,j} \cdot 
D_W\Psi(W_\mathbf{s},0)[\tilde{K}_u(\mathbf{s},0)]\big) 
+ \Tr\!\big(L_{0,j} \cdot D_u\Psi(W_\mathbf{s},0)\big),
\]
where 
$W_\mathbf{s} := C + \sum_k s_k K_{0,k} 
+ \tilde{K}(\mathbf{s},0)$. At $\mathbf{s} = 0$: 
$W_0 = C$ and $D_W\Psi(C,0) = \LL$, so the first term becomes 
$\Tr(L_{0,j} \cdot \LL[\tilde{K}_u(0,0)]) 
= \Tr(\LL^*(L_{0,j}) \cdot \tilde{K}_u(0,0)) = 0$ since 
$L_{0,j} \in \ker\LL^*$. The second term is 
$\Tr(L_{0,j} \cdot C^2) = \Tr(Y_j C) = 0$ 
by~\eqref{eq:FOC_general}. Therefore 
$\psi_j(0,0) = (\phi_j)_u(0,0) = 0$.


\bigskip\noindent
\textbf{Part (ii): the Jacobian.}\;
We compute $\partial\psi_j/\partial s_k|_{(0,0)} 
= (\phi_j)_{s_k u}(0,0)$. This involves 
several steps.

\medskip\noindent
\textbf{Step 1: $\tilde{K}_{s_k}(0,0) = 0$.}\;
The family $V_\mathbf{t} = C^{1/2}\exp(\sum_\ell t_\ell Y_\ell)\,
C^{1/2}$ solves $\Psi(V_\mathbf{t}, 0) = 0$ for all 
$\mathbf{t}$. Consider the one-parameter subfamily 
$V_k(t) := V_{t\mathbf{e}_k}$ for fixed $k$. Expanding at 
$t = 0$:
\[
V_k(t) = C + t\,K_{0,k} + O(t^2).
\]
The Lyapunov--Schmidt coordinate of $V_k(t)$ along $K_{0,k}$ is
\[
\sigma_k(t) = \frac{\langle K_{0,k},\, V_k(t) - C\rangle}
{\|K_{0,k}\|^2} = t + O(t^2),
\]
so $\sigma_k(0) = 0$ and $\sigma_k'(0) = 1$. The 
$(\ker\LL)^\perp$-component of $V_k(t) - C$ is
\[
\tilde{K}_{0,k}(t) := (V_k(t) - C) - \sigma_k(t)\,K_{0,k},
\]
which satisfies $\tilde{K}_{0,k}(0) = 0$ and 
$\tilde{K}_{0,k}'(0) = K_{0,k} - \sigma_k'(0)\,K_{0,k} = 0$. 
Since $V_k(t)$ solves $\Psi = 0$ at $u = 0$, the auxiliary 
equation is satisfied along the subfamily, giving 
$\tilde{K}(\sigma_k(t)\,\mathbf{e}_k,\, 0) 
= \tilde{K}_{0,k}(t)$. Differentiating at $t = 0$:
\[
\tilde{K}_{s_k}(0,0) \cdot \sigma_k'(0) 
= \tilde{K}_{0,k}'(0) = 0.
\]
Since $\sigma_k'(0) = 1$, we conclude 
$\tilde{K}_{s_k}(0,0) = 0$.

(The family $V_\mathbf{t}$ provides exact solutions at $u = 0$ whose tangent vectors at the base point $C$ are spanned by  the kernel directions $K_{0,k}$. Moving along the family in the $k$-th direction moves purely along $K_{0,k}$ to first order, with no component orthogonal to the kernel. So the LS ``remainder" $\tilde{K}$ doesn't activate at first order in $s_k$.)

\medskip\noindent
\textbf{Step 2: decomposition of 
$(\phi_j)_{s_k u}(0,0)$.}\;
By definition,
\[
\phi_j(\mathbf{s},u) = \Tr\!\big(L_{0,j} \cdot 
\Psi(W(\mathbf{s},u),\, u)\big),
\]
where $W(\mathbf{s},u) = C + \sum_\ell s_\ell K_{0,\ell} 
+ \tilde{K}(\mathbf{s},u)$. Differentiating in $s_k$:
\[
(\phi_j)_{s_k} = \Tr\!\big(L_{0,j} \cdot 
D_W\Psi(W,u)[W_{s_k}]\big),
\]
where $W_{s_k} = K_{0,k} + \tilde{K}_{s_k}$. Differentiating 
again in $u$ at $(0,0)$, the product rule gives three terms:
\begin{align*}
(\phi_j)_{s_k u}(0,0) 
&= \Tr\!\big(L_{0,j} \cdot 
   D^2_{WW}\Psi(C,0)[\tilde{K}_u(0,0),\, K_{0,k}]\big) \\
&\quad + \Tr\!\big(L_{0,j} \cdot 
   D^2_{Wu}\Psi(C,0)[K_{0,k}]\big) \\
&\quad + \Tr\!\big(L_{0,j} \cdot 
   \LL[\tilde{K}_{s_k u}(0,0)]\big),
\end{align*}
where we used $W_{s_k}(0,0) = K_{0,k}$ (Step~1), 
$W_u(0,0) = \tilde{K}_u(0,0)$, and 
$D_W\Psi(C,0) = \LL$. The third term vanishes since 
$L_{0,j} \in \ker\LL^*$, leaving
\begin{equation}\label{eq:jac_decomp}
\frac{\partial\psi_j}{\partial s_k}\bigg|_{(0,0)} 
= \underbrace{\Tr\!\big(L_{0,j} \cdot 
  D^2_{WW}\Psi(C,0)[K_{0,k},\, \tilde{K}_u]\big)}_
  {\text{(I)}_{jk}} 
+ \underbrace{\Tr\!\big(L_{0,j} \cdot 
  D^2_{Wu}\Psi(C,0)[K_{0,k}]\big)}_{\text{(II)}_{jk}},
\end{equation}
where $\tilde{K}_u = \tilde{K}_u(0,0)$.

\medskip\noindent
\textbf{Step 3: the auxiliary equation for $\tilde{K}_u$.}\;
The auxiliary equation is 
$P[\Psi(W(\mathbf{s},u), u)] = 0$, where $P$ projects onto 
$(\ker\LL^*)^\perp$. Differentiating in $u$ at $(0,0)$:
\[
P\big[\LL(\tilde{K}_u) + C^2\big] = 0,
\]
i.e., $\LL(\tilde{K}_u) + C^2 \in \ker P 
= \operatorname{span}\{L_{0,1}, \ldots, L_{0,d}\}$. But 
$\langle L_{0,j}, \LL(\tilde{K}_u)\rangle = 0$ (since 
$L_{0,j} \in \ker\LL^*$) and 
$\langle L_{0,j}, C^2\rangle = \Tr(Y_j C) = 0$ (assumption). 
So the component of $\LL(\tilde{K}_u) + C^2$ in each 
$L_{0,j}$-direction vanishes, giving
\begin{equation}\label{eq:aux_general}
\LL(\tilde{K}_u) = -C^2,
\end{equation}
equivalently $\tilde{K}_u + C\,\eta(\tilde{K}_u)\,C = -C^2$.

\medskip\noindent
\textbf{Step 4: a key trace identity.}\;
Taking the inner product of~\eqref{eq:aux_general} with 
$\hat{Q}_{jk}$:
\[
\Tr(\hat{Q}_{jk}\,\tilde{K}_u) 
+ \Tr(\hat{Q}_{jk}\,C\,\eta(\tilde{K}_u)\,C) 
= -\Tr(\hat{Q}_{jk}\,C^2).
\]
For the second term, cyclicity and self-adjointness of $\eta$ 
give
\[
\Tr(\hat{Q}_{jk}\,C\,\eta(\tilde{K}_u)\,C) 
= \Tr(\eta(C\hat{Q}_{jk}C)\,\tilde{K}_u) 
= \Tr(\hat{Q}_{jk}\,\tilde{K}_u),
\]
where the last step uses Identity~3. Therefore
\begin{equation}\label{eq:key_trace_general}
\Tr(\hat{Q}_{jk}\,\tilde{K}_u) 
= -\tfrac{1}{2}\,\Tr(\hat{Q}_{jk}\,C^2) 
= -\tfrac{1}{2}\,\Tr(Y_j Y_k C).
\end{equation}

\medskip\noindent
\textbf{Step 5: computing term $\text{(II)}_{jk}$.}\;
We need $D^2_{Wu}\Psi(C,0)[K]$, the derivative of 
$D_u\Psi(W,u) = (\eta(W)+uI)^{-2}$ with respect to $W$ in 
direction $K$, evaluated at $(C,0)$. Set 
$A(t) := \eta(C + tK) + 0 \cdot I = C^{-1} + t\,\eta(K)$. 
Using the product rule on $A^{-2} = A^{-1} \cdot A^{-1}$:
\[
\frac{d}{dt}\Big|_{t=0} A(t)^{-2} 
= \frac{d}{dt}\Big|_{t=0}\!\big(A^{-1}\big) \cdot A(0)^{-1} 
+ A(0)^{-1} \cdot \frac{d}{dt}\Big|_{t=0}\!\big(A^{-1}\big).
\]
Since $\frac{d}{dt}|_{t=0}\, A(t)^{-1} 
= -A(0)^{-1}\,\dot{A}(0)\,A(0)^{-1} 
= -C\,\eta(K)\,C$ (using $A(0) = C^{-1}$, $A(0)^{-1} = C$, 
$\dot{A}(0) = \eta(K)$), we obtain
\begin{align*}
D^2_{Wu}\Psi(C,0)[K] 
&= -C\,\eta(K)\,C \cdot C + C \cdot \big({-C\,\eta(K)\,C}\big)
\\
&= -C\,\eta(K)\,C^2 - C^2\,\eta(K)\,C.
\end{align*}

For $K = K_{0,k}$, using $\eta(K_{0,k}) = -L_{0,k}$ 
(Identity~1):
\[
D^2_{Wu}\Psi(C,0)[K_{0,k}] = C\,L_{0,k}\,C^2 + C^2\,L_{0,k}\,C.
\]
Therefore
\begin{align*}
\text{(II)}_{jk} 
&= \Tr(L_{0,j}\,C\,L_{0,k}\,C^2) 
  + \Tr(L_{0,j}\,C^2\,L_{0,k}\,C).
\end{align*}
For the first summand: 
$L_{0,j}\,C\,L_{0,k} = C^{-1/2}Y_j Y_k C^{-1/2} 
= \hat{Q}_{jk}$, so 
\[
\Tr(L_{0,j}\,C\,L_{0,k}\,C^2) = \Tr(\hat{Q}_{jk}\,C^2) 
= \Tr(Y_j Y_k C).
\] 
For the second: 
$L_{0,j}\,C^2\,L_{0,k}\,C 
= C^{-1/2}Y_j\,C\,Y_k\,C^{1/2}$ 
(using $L_{0,j}\,C = C^{-1/2}Y_j C^{1/2}$ and 
$C\,L_{0,k}\,C = K_{0,k}$), so 
$\Tr(L_{0,j}\,C^2\,L_{0,k}\,C) = \Tr(Y_j\,C\,Y_k)$. Thus
\begin{equation}\label{eq:II_general}
\text{(II)}_{jk} = \Tr(Y_k C Y_j ) + \Tr( Y_j C Y_k ).
\end{equation}

\medskip\noindent
\textbf{Step 6: computing term $\text{(I)}_{jk}$.}\;
Since $\Psi(W,0) = W - (\eta(W))^{-1}$, differentiating 
$D_W\Psi(W,0)[K_2] = K_2 + (\eta(W))^{-1}\eta(K_2)(\eta(W))^{-1}$ 
in $W$ at $C$ by the product rule on $A^{-1}\eta(K_2)A^{-1}$ 
with $A = \eta(W)$:
\[
D^2_{WW}\Psi(C,0)[K_1, K_2] 
= -C\,\eta(K_1)\,C\,\eta(K_2)\,C 
  - C\,\eta(K_2)\,C\,\eta(K_1)\,C.
\]
For $K_1 = K_{0,k}$, using $C\,\eta(K_{0,k})\,C = -K_{0,k}$ 
(Identity~1):
\[
D^2_{WW}\Psi(C,0)[K_{0,k},\, \tilde{K}_u] 
= K_{0,k}\,\eta(\tilde{K}_u)\,C 
+ C\,\eta(\tilde{K}_u)\,K_{0,k}.
\]
From~\eqref{eq:aux_general}: 
$C\,\eta(\tilde{K}_u)\,C = -C^2 - \tilde{K}_u$, so 
$\eta(\tilde{K}_u)\,C = -C - C^{-1}\tilde{K}_u$ and 
$C\,\eta(\tilde{K}_u) = -C - \tilde{K}_u C^{-1}$. Substituting:
\begin{align*}
\text{(I)}_{jk} 
&= \Tr\!\big(L_{0,j}\,K_{0,k}
   (-C - C^{-1}\tilde{K}_u)\big) 
+ \Tr\!\big(L_{0,j}
   (-C - \tilde{K}_u C^{-1})\,K_{0,k}\big) \\
&= -\Tr(L_{0,j} K_{0,k} C) 
   - \Tr(L_{0,j} K_{0,k} C^{-1}\tilde{K}_u) \\
&\quad\; -\Tr(L_{0,j} C K_{0,k}) 
   - \Tr(L_{0,j} \tilde{K}_u C^{-1} K_{0,k}).
\end{align*}

For the terms without $\tilde{K}_u$: 
$L_{0,j}\,K_{0,k} = C^{-1/2}Y_j Y_k C^{1/2}$, so 
$\Tr(L_{0,j} K_{0,k} C) = \Tr(Y_j Y_k C)$, and 
$\Tr(L_{0,j} C K_{0,k}) = \Tr(Y_j C Y_k)$ (since 
$L_{0,j}\,C = C^{-1/2}Y_j C^{1/2}$ and 
$C\,K_{0,k} = C^{3/2}Y_k C^{1/2}$). 

For the terms with 
$\tilde{K}_u$: 
$L_{0,j}\,K_{0,k}\,C^{-1} 
= C^{-1/2}Y_j Y_k C^{-1/2} = \hat{Q}_{jk}$, and by cyclicity 
$\Tr(L_{0,j}\,\tilde{K}_u\,C^{-1}\,K_{0,k}) 
= \Tr(C^{-1}K_{0,k}\,L_{0,j}\,\tilde{K}_u) 
= \Tr(\hat{Q}_{kj}\,\tilde{K}_u)$ (since 
$C^{-1}K_{0,k}\,L_{0,j} = C^{-1/2}Y_k Y_j C^{-1/2} 
= \hat{Q}_{kj}$). Therefore
\[
\text{(I)}_{jk} = -\Tr(Y_j Y_k C) - \Tr(Y_j C Y_k) 
- \Tr(\hat{Q}_{jk}\,\tilde{K}_u) 
- \Tr(\hat{Q}_{kj}\,\tilde{K}_u).
\]
Substituting~\eqref{eq:key_trace_general}:
\begin{align}\label{eq:I_general}
\text{(I)}_{jk} &= -\Tr(Y_j Y_k C) - \Tr(Y_j C Y_k) 
+ \tfrac{1}{2}\Tr(Y_j Y_k C) + \tfrac{1}{2}\Tr(Y_k Y_j C)
\\
&= -\frac{1}{2}\big[\Tr(Y_j C Y_k) + \Tr(Y_k C Y_j) \big] \notag
\end{align}

\medskip\noindent
\textbf{Step 7: combining.}\;
From~\eqref{eq:jac_decomp}, \eqref{eq:II_general}, 
and~\eqref{eq:I_general}:
\[
\frac{\partial\psi_j}{\partial s_k}\bigg|_{(0,0)} 
= \text{(I)}_{jk} + \text{(II)}_{jk} 
= \frac{1}{2}\big[\Tr(Y_j C Y_k) + \Tr(Y_k C Y_j) \big].
\]

\medskip\noindent
\textbf{Positive definiteness.}\;
For any nonzero $\mathbf{v} = (v_1, \ldots, v_d) \in \bR^d$, 
set $Z = \sum_j v_j Y_j$ (nonzero, since the $Y_j$ are linearly 
independent). Then
\begin{align*}
\sum_{j,k} v_j v_k \,\frac{1}{2}\big[\Tr(Y_j C Y_k) + \Tr(Y_k C Y_j)\big]
&= \frac{1}{2}\big[\Tr(Z C Z) + \Tr(Z C Z)\big] 
\\
&= \Tr(Z^2 C) = \Tr(C^{1/2}\,Z^2\,C^{1/2}) > 0,
\end{align*}
since $Z \neq 0$ implies $Z^2 \neq 0$ (so $Z^2 \succeq 0$ is nonzero) 
and $C \succ 0$.
\end{proof}

\subsection{Non-singularity theorem} 


\begin{lemma}[Non-singularity at the trace minimizer]
\label{lem:nonsingularity}
Let $S$ be a matrix semicircle with self-adjoint covariance map 
$\eta\colon M_n(\bC) \to M_n(\bC)$ satisfying 
$\eta(C) = C^{-1}$ for some $C \succ 0$. Let 
$\Phi = S_{C^{1/2},C^{1/2}}(\eta)$, 
$\mathcal{V} := \ker(\Phi + \Id)|_{\Herm_n}$, 
$d := \dim\mathcal{V}$, and $\{Y_1, \ldots, Y_d\}$ be a basis 
of $\mathcal{V}$. Assume that
\begin{equation}\label{eq:FOC_lemma}
\Tr(Y_j C) = 0 \qquad \text{for all } j = 1, \ldots, d.
\end{equation}
Then:
\begin{enumerate}[label=\textup{(\roman*)}]
\item $S$ is non-singular with real-analytic density near 
  $x = 0$: the HFS solution $W(u)$ extends real-analytically 
  through $u = 0$ with $W(0^+) = C$.
\item The spectral density at $x = 0$ satisfies
  \[
  f(0) = \frac{1}{\pi}\,\tr(C) 
  \geq \frac{1}{\pi}\,\Capa(\eta)^{-1/(2n)} 
  = \frac{1}{\pi\sqrt{e}\,\Delta(S)}.
  \]
\end{enumerate}
\end{lemma}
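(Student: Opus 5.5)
The plan is to build a real-analytic branch of solutions of the HFS equation $\eta(W)W+uW=I$ through $(C,0)$, and then show it coincides with the HFS solution $W(u)$ for $\Re u>0$. I will split into two cases according to $d=\dim\mathcal V$.

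If $d=0$, the linearization $\LL(K)=K+C\eta(K)C$ is invertible on $\Herm_n$. Its complexification is then invertible on $M_n(\bC)$. The holomorphic implicit function theorem, applied to $\Psi(W,u)=W-(\eta(W)+uI)^{-1}$ on $M_n(\bC)\times\bC$, gives a holomorphic solution $\widehat W(u)$ near $u=0$ with $\widehat W(0)=C$.

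If $d\ge 1$, I will use the Lyapunov--Schmidt reduction already set up, and work with complex parameters. Since $\Psi$ is holomorphic in $(W,u)$, the auxiliary solution $\tilde K(\mathbf s,u)$ and the factors $\psi_j$ are holomorphic in $(\mathbf s,u)\in\bC^d\times\bC$; the Hadamard integral formula preserves analyticity. Lemma~\ref{lem:transversality_general} applies under~\eqref{eq:FOC_lemma}, after replacing the basis by an orthogonal one, since the hypothesis is basis-independent. It gives $\psi(0,0)=0$ and an invertible Jacobian $\partial\psi/\partial\mathbf s$. The holomorphic implicit function theorem then yields $\mathbf s(u)$, and $\widehat W(u):=C+\sum s_k(u)K_{0,k}+\tilde K(\mathbf s(u),u)$ solves $\Psi=0$ for small $|u|$. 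This uses $\phi_j=u\psi_j$ together with the auxiliary equation. For real $u$ everything is Hermitian, so $\widehat W$ is real-analytic on the real axis.

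Next I must identify $\widehat W$ with the HFS solution, and I expect this to be the main obstacle. For small $u$ with $\Re u>0$, I will show $\Re\widehat W(u)\succ 0$ and then invoke uniqueness of the strictly accretive solution (\cite{hfs2007}, Theorem~2.1). Since $\widehat W(0)=C\succ0$, $\Re\widehat W(u)=C+O(|u|)\succ0$ near $0$, so uniqueness gives $\widehat W=W$ on $\{\Re u>0,|u|<\delta\}$. Boundedness and non-tangential limits at every $it$ with $|t|<\delta$ are then automatic, and so is continuity of $t\mapsto W(it+0^+)=\widehat W(it)$. Strictly, uniqueness in \cite{hfs2007} is stated on the whole half-plane; I will use that uniqueness holds pointwise for each fixed $u$ with $\Re u>0$, which is how their argument runs. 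If that is not available, I would fall back on analytic continuation from a large-$|u|$ region via the identity theorem.

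Finally, for the density formula, \eqref{equ_density_W} gives $f(0)=\frac1\pi\tr(\Re W(0^+))=\frac1\pi\tr(C)$. For the inequality, $\eta(C)=C^{-1}$ implies $\eta(\eta(C)^{-1})=\eta(C)=C^{-1}$, which is the FOC~\eqref{capacity_FOC}. Lemma~\ref{lemma_exact_fixed_points_capacity} then gives $\Capa(\eta)=\det(C^{-1})/\det C=\det(C)^{-2}$. By AM--GM, $\tr(C)\ge\det(C)^{1/n}=\Capa(\eta)^{-1/(2n)}$. The last identity $\Capa(\eta)^{-1/(2n)}=(\sqrt e\,\Delta(S))^{-1}$ is quoted from \cite{mai_speicher2024}.
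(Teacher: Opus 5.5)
Your proposal is correct and follows essentially the same route as the paper's proof: the split $d=0$ versus $d\ge 1$, the Lyapunov--Schmidt reduction with Lemma~\ref{lem:transversality_general} and the implicit function theorem, identification with the HFS solution via strict accretivity near $C$ plus pointwise HFS uniqueness, and AM--GM combined with $\Capa(\eta)=\det(C)^{-2}$ from Lemma~\ref{lemma_exact_fixed_points_capacity}. Complexifying $(\mathbf s,u)$ is a worthwhile refinement: it gives the non-tangential limits at every $it$ with $|t|$ small, and their continuity, as the definition of non-singularity requires, whereas the paper's real-$u$ argument only treats $u\to 0^+$. The one detail you should add is that $\phi_j(\mathbf s,0)\equiv 0$ also for complex $\mathbf s$; this follows from the identity theorem, since the function is holomorphic and vanishes for real $\mathbf s$.
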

\begin{proof}

Consider the equation
\[
\Psi(W, u) := W - (\eta(W) + uI)^{-1} = 0
\]
on $\Herm_n \times \bR$. Note that $\Psi(C, 0) = 0$ 
(since $\eta(C) = C^{-1}$), and the Fr\'echet derivative 
in $W$ at $(C, 0)$ is the linearization
$\LL = D_W\Psi|_{(C,0)}\colon K \mapsto K + C\,\eta(K)\,C$.
Let $d = \dim \ker(\Phi + \Id)|_{\Herm_n}$; by 
Proposition~\ref{propo:kernel_tangent_family}\,(\ref{item:tangent}), 
this equals $\dim \ker \LL$.

If $d = 0$, then $\LL = D_W\Psi(C,0)$ is invertible, and 
the implicit function theorem applies directly to 
$\Psi(W, u) = 0$ at $(C, 0)$.

If $d \geq 1$, by 
Proposition~\ref{propo:kernel_tangent_family}, the family 
$V_{\sv} = C^{1/2}\exp(\sum_j s_j Y_j)\,C^{1/2}$ solves 
$\Psi(V_{\sv}, 0) = 0$ for all $\sv$ near $0$. The 
Lyapunov--Schmidt reduction at $(C, 0)$ produces bifurcation 
equations $\phi_j(\sv, u) = u\,\psi_j(\sv, u)$. By 
Lemma~\ref{lem:transversality_general}, the 
hypothesis~\eqref{eq:FOC_lemma} gives $\psi_j(0, 0) = 0$ 
for all $j$ and the Jacobian 
$\partial\psi_j/\partial s_k|_{(0,0)}$ is positive definite. 
The implicit function theorem applied to the system 
$\psi_j(\sv, u) = 0$ at $(\sv, u) = (0, 0)$ yields a smooth 
curve $\sv(u)$ with $\sv(0) = 0$. Together with the auxiliary 
equation, this gives a real-analytic family of solutions 
$u \mapsto W(u)$ of $\Psi(W, u) = 0$ with $W(0) = C \succ 0$.

In both cases, for sufficiently small $u > 0$, $W(u)$ is 
strictly accretive (by continuity, since 
$W(0) = C \succ 0$). By the uniqueness theorem for the HFS 
equation~\cite[Theorem~2.1]{hfs2007}, this coincides with 
the HFS solution. Therefore $W(u) \to C$ as $u \to 0^+$, 
and $S$ is non-singular with 
$f(0) = \frac{1}{\pi}\tr(C)$.

For the lower bound: by 
Proposition~\ref{propo:kernel_tangent_family}, 
$\det(V_{\sv}) = \det(C)\,e^{\Tr(\sum s_j Y_j)} = \det(C)$ 
(since each $Y_j$ is traceless). By AM--GM, 
$\tr(C) \geq \det(C)^{1/n} = \Capa(\eta)^{-1/(2n)}$.
\end{proof}


\begin{theo}[Non-singularity for DS-scalable maps]
\label{theo:nonsingularity}
Let $S$ be a matrix semicircle with self-adjoint covariance map 
$\eta\colon M_n(\bC) \to M_n(\bC)$. Assume that the set
\[
\bS = \{W \succ 0 : \eta(W)\,W = I\}
\]
is nonempty. Then:
\begin{enumerate}[label=\textup{(\roman*)}]
\item $S$ is non-singular at $t_0 = 0$ with real-analytic 
  density near $x = 0$, and $f(0) = \frac{1}{\pi}\,\tr(C)$, 
  where $C \in \bS$ is the trace minimizer.
\item The trace minimizer is unique: there is a unique 
  $C \in \bS$ with $\tr(C) \leq \tr(W)$ for all $W \in \bS$.
\item The spectral density at $x = 0$ satisfies
  \[
  f(0) = \frac{1}{\pi}\,\tr(C) 
  \geq \frac{1}{\pi}\,\Capa(\eta)^{-1/(2n)} 
  = \frac{1}{\pi\sqrt{e}\,\Delta(S)}.
  \]
\end{enumerate}
\end{theo}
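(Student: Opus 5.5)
Parts (i) and (iii) follow by combining Proposition~\ref{propo:trace_minimizer} with Lemma~\ref{lem:nonsingularity}. Since $\bS\neq\emptyset$, Proposition~\ref{propo:trace_minimizer}(i) gives a trace minimizer $C\in\bS$. Because $C\succ0$ and $\eta(C)C=I$, we have $\eta(C)=C^{-1}$. Part (ii) of the same proposition supplies the first-order conditions $\Tr(Y_jC)=0$ for a basis of $\VV=\ker(\Phi+\Id)|_{\Herm_n}$, where $\Phi=S_{C^{1/2},C^{1/2}}(\eta)$. These are exactly hypothesis~\eqref{eq:FOC_lemma}. Lemma~\ref{lem:nonsingularity} then gives non-singularity at $t_0=0$, real-analyticity of $W$ through $u=0$ with $W(0^+)=C$, the formula $f(0)=\frac1\pi\tr(C)$, and the capacity lower bound. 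One point needs checking here: the lower bound requires $\det(C)=\Capa(\eta)^{-1/2}$. This follows from Lemma~\ref{lemma_exact_fixed_points_capacity}. The FOC holds because $\eta^*(\eta(C)^{-1})=\eta(C)=C^{-1}$, so $\Capa(\eta)=\det(\eta(C))/\det C=\det(C)^{-2}$. The equality with $(\sqrt e\,\Delta(S))^{-1}$ is quoted from \cite{mai_speicher2024}.

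The only genuinely new content is (ii), the uniqueness of the trace minimizer. I would derive it from the analytic side. Suppose $C,C'\in\bS$ are both trace minimizers. Each satisfies the hypotheses of Lemma~\ref{lem:nonsingularity}, since Proposition~\ref{propo:trace_minimizer}(ii) only used minimality of trace. So the lemma applies at each of them. For small $u>0$ it produces strictly accretive solutions of the HFS equation $\eta(W)W+uW=I$ converging to $C$ and to $C'$ respectively. By the uniqueness theorem \cite[Theorem~2.1]{hfs2007}, both curves coincide with the single HFS solution $W(u)$. Hence $C=\lim_{u\to0^+}W(u)=C'$.

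The main obstacle is making sure nothing in the proof of Lemma~\ref{lem:nonsingularity} silently used uniqueness of the minimizer. In particular, the identification of the Lyapunov--Schmidt curve with the HFS solution must need only strict accretivity for small real $u>0$. If that identification were delicate, I would fall back on a direct argument. Both $C$ and $C'$ lie in $\bS$ and can be joined by the 2-cycle/geodesic structure of $F(X)=\eta(X)^{-1}$ (Proposition~\ref{prop:geodesic_reflection}). Strict convexity of the trace along the geodesic $\gamma_{C\to C'}$ would then give a contradiction, provided that geodesic stays inside $\bS$. Lemma~\ref{lem:geodesic} suggests it does, because both endpoints are fixed points of $F$. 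I expect the HFS-uniqueness route to be cleaner, and would present it first.
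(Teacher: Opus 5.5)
Your proposal is correct and follows the paper's proof essentially verbatim. For (i) and (iii) you combine Proposition~\ref{propo:trace_minimizer} with Lemma~\ref{lem:nonsingularity}, and for (ii) you apply that lemma at each of two putative trace minimizers and use HFS uniqueness of the strictly accretive solution to force $C=C'$. Your derivation of $\det(C)=\Capa(\eta)^{-1/2}$ from Lemma~\ref{lemma_exact_fixed_points_capacity} usefully fills in a step the paper leaves implicit; your geodesic fallback, however, would need a two-fixed-point variant of Lemma~\ref{lem:geodesic}, since that lemma as stated concerns 2-cycles.
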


\begin{proof}
\textbf{(i).}\;
By Proposition~\ref{propo:trace_minimizer}, a trace minimizer 
$C \in \bS$ exists and satisfies $\Tr(Y_j C) = 0$ for all 
$Y_j \in \ker(\Phi + \Id)|_{\Herm_n}$. 
Lemma~\ref{lem:nonsingularity} then gives that $W(u)$ extends 
real-analytically through $u = 0$ with $W(0^+) = C$, so $S$ is 
non-singular with $f(0) = \frac{1}{\pi}\,\tr(C)$.

\medskip\noindent
\textbf{(ii).}\;
Let $C_1, C_2 \in \bS$ be any two trace minimizers. By 
part~(i) applied to each, the HFS solution satisfies both 
$W(0^+) = C_1$ and $W(0^+) = C_2$. Since $W(u)$ is unique 
for each $u > 0$, the limit is unique, so $C_1 = C_2$.

\medskip\noindent
\textbf{(iii).}\;
By Proposition~\ref{propo:kernel_tangent_family}, the family 
$V_{\sv} = C^{1/2}\exp(\sum_j s_j Y_j)\,C^{1/2}$ lies in 
$\bS$ with $\det(V_{\sv}) = \det(C)$ (since each $Y_j$ is 
traceless). By AM--GM,
\[
\tr(C) \geq \det(C)^{1/n} = \Capa(\eta)^{-1/(2n)},
\]
which gives the lower bound.
\end{proof}

\section{Proof of the main theorem} 
\label{sec:main}
\subsection{Non-singular $\Longrightarrow$ sym DS-scalable} 

\begin{lemma}
\label{lemma_W_symmetry}
Let $W(u)$ be the strictly accretive solution 
of~\eqref{eq:HMS} for $u \in D = \{\Re u > 0\}$. Then
\[
W(\bar{u}) = W(u)^*.
\]
In particular, $W(t) \succ 0$ for $t > 0$.
\end{lemma}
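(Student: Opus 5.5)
The plan is to use the uniqueness part of \cite[Theorem~2.1]{hfs2007}: for each $u$ with $\Re u > 0$, equation~\eqref{eq:HMS} has exactly one strictly accretive solution. For $u$ in this domain, set $\widetilde W(u) := W(\bar u)^*$. Since $\Re \bar u = \Re u > 0$, this is defined on all of $D$. I will show that $\widetilde W(u)$ is again a strictly accretive solution of~\eqref{eq:HMS} at the point $u$. Uniqueness then gives $\widetilde W(u) = W(u)$, which is $W(\bar u) = W(u)^*$ after replacing $u$ by $\bar u$.

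\textbf{Strict accretivity.} This is immediate: $\Re(X^*) = \Re X$, so $\Re \widetilde W(u) = \Re W(\bar u) \succ 0$.

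\textbf{The equation.} Start from $\eta(W(\bar u))\,W(\bar u) + \bar u\,W(\bar u) = I$ and take adjoints. Because the Kraus operators $A_i$ are Hermitian, $\eta$ preserves adjoints: $\eta(X)^* = \eta(X^*)$. Taking adjoints therefore gives
\[
\widetilde W(u)\,\eta(\widetilde W(u)) + u\,\widetilde W(u) = I.
\]
This has the factors in the opposite order from~\eqref{eq:HMS}, so one more step is needed.

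\textbf{Main obstacle: reversing the order.} Write $X = \widetilde W(u)$. The displayed identity says $X(\eta(X) + uI) = I$. For square matrices a one-sided inverse is two-sided, so $(\eta(X)+uI)X = I$ as well. That is exactly $\eta(X)X + uX = I$. In short, the equation is equivalent to $W = (\eta(W)+uI)^{-1}$, as used in the definition of $\Psi$, and this form is symmetric in the order of the factors. With this, $\widetilde W$ is a strictly accretive solution at $u$, and uniqueness completes the symmetry identity.

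\textbf{The consequence for real $t > 0$.} Setting $u = t$ gives $W(t) = W(t)^*$, so $W(t)$ is Hermitian. Strict accretivity then reads $W(t) = \Re W(t) \succ 0$.
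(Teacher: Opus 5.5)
Your proof is correct and follows essentially the same route as the paper: take adjoints of~\eqref{eq:HMS} using $\eta(X)^* = \eta(X^*)$, and use invertibility to pass to the order-independent form $W = (\eta(W)+uI)^{-1}$ (the paper writes this as $\eta(W) = W^{-1} - uI$). Both then conclude by the uniqueness in \cite[Theorem~2.1]{hfs2007}. You also make explicit two points the paper leaves implicit: $\Re(W^*) = \Re W$ keeps the adjoint strictly accretive, and Hermitian together with strictly accretive gives $W(t) \succ 0$.
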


\begin{proof}
Since $\Re\, W \succ 0$, $W$ is invertible 
and~\eqref{eq:HMS} gives 
$\eta(W(u)) = W(u)^{-1} - uI$. (Note that this 
expresses $\eta(W)$ as a function of $W$ alone, so 
in particular $[W(u),\, \eta(W(u))] = 0$.)

Taking adjoints and using $\eta(W^*) = \eta(W)^*$:
\[
\big(W(u)^*\big)^{-1} 
= \bar{u}\,I + \eta\big(W(u)^*\big),
\]
which is exactly equation~\eqref{eq:HMS} at the 
point $\bar{u}$. By the uniqueness of the strictly 
accretive solution~\cite[Theorem~2.1]{hfs2007}, 
$W(\bar{u}) = W(u)^*$.
\end{proof}

\begin{propo}
\label{propo_regularity_implies_DS}
Let $S$ be a matrix semicircular variable that is non-singular at $t_0 = 0$. 
Then the covariance map $\eta_S$ is symmetrically DS-scalable.
\end{propo}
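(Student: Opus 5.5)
The plan is to extract a boundary value $W_0 = W(0^+)$ from non-singularity and show that it satisfies $\eta(W_0) = W_0^{-1}$ with $W_0 \succ 0$.

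\textbf{Step 1: the boundary value is positive semidefinite.} By Definition~\ref{defi_nonsingularity} at $t_0=0$, the non-tangential limit $W_0 := \lim_{u\to 0,\,u\in\Gamma_\alpha(0)} W(u)$ exists. In particular $W_0 = \lim_{t\downarrow 0} W(t)$ along the positive real axis, which lies in every Stolz cone $\Gamma_\alpha(0)$. By Lemma~\ref{lemma_W_symmetry}, $W(t) = W(\bar t) = W(t)^*$ and $W(t)\succ 0$ for $t>0$. Passing to the limit, $W_0$ is Hermitian and $W_0 \succeq 0$.

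\textbf{Step 2: the limiting equation.} Letting $t\downarrow 0$ in \eqref{eq:HMS}, we have $\eta(W(t))W(t) + tW(t) = I$. Boundedness of $W$ near $0$ (condition (i) of the definition) gives $tW(t)\to 0$. Continuity of $\eta$ then yields $\eta(W_0)\,W_0 = I$.

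\textbf{Step 3: strict positivity.} From $\eta(W_0)W_0 = I$, the matrix $W_0$ is invertible, with inverse $\eta(W_0)$. A positive semidefinite invertible matrix is positive definite, so $W_0 \succ 0$. Hence $C := W_0$ satisfies $\eta(C) = C^{-1}$ with $C\succ 0$, which is exactly \eqref{equ_symmetric_scalability}. Therefore $S_{C^{1/2},C^{1/2}}(\eta_S)$ is doubly stochastic, and $\eta_S$ is symmetrically DS-scalable.

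The only point needing care is Step~2. There we must make sure that the limit is taken inside the region where non-singularity gives control, namely boundedness and existence of the limit. Restricting to the real ray $u=t>0$ inside the Stolz cone handles this directly. Nothing else appears to be an obstacle, since invertibility comes for free from the equation itself rather than from any spectral argument.
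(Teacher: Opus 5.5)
Your proposal is correct and follows essentially the same route as the paper. You take the non-tangential limit along the positive real axis, use Lemma~\ref{lemma_W_symmetry} to get $W_0 \succeq 0$, pass to the limit in \eqref{eq:HMS} using boundedness, and obtain invertibility of $W_0$ from $\eta(W_0)\,W_0 = I$. The only cosmetic difference is that the paper phrases the invertibility step as a rank argument, whereas you read it off directly from $\eta(W_0)$ being an inverse.
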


\begin{proof}
Let $W(u)$ denote the unique strictly accretive solution of
\begin{equation}\label{eq:HMS_again}
\eta\!\bigl(W(u)\bigr)\,W(u) + u\,W(u) = I,
\qquad \Re u > 0.
\end{equation}

\smallskip
\noindent\textbf{Step~1: Passing to the limit in the equation.}
By non-singularity at $t_0 = 0$, there exists a Stolz cone on which 
$W$ is bounded and has a non-tangential limit
\[
W_0 := \lim_{\substack{u \to 0 \\ \text{n.t.}}} W(u).
\]
Since $W$ is bounded on this cone, $uW(u) \to 0$ as $u \to 0$ 
non-tangentially. The linearity of $\eta$ gives 
$\eta(W(u)) \to \eta(W_0)$. Taking the non-tangential limit 
in~\eqref{eq:HMS_again}:
\begin{equation}
\label{equ_scaling_identity}
\eta(W_0)\,W_0 = I.
\end{equation}

\smallskip
\noindent\textbf{Step~2: $W_0 \succ 0$.}
The positive real axis $\{u = \tau : \tau > 0\}$ lies in every Stolz 
cone $\Gamma_\alpha(0)$, so the non-tangential limit agrees with the 
real-axis limit: $W_0 = \lim_{\tau \downarrow 0} W(\tau)$. 
By Lemma~\ref{lemma_W_symmetry}, $W(\tau) \succ 0$  for 
$\tau > 0$, and passing to the limit gives $W_0 \succeq 0$. 

Moreover, $W_0$ is invertible: if it were not, then 
$\operatorname{rank}(\eta(W_0)\,W_0) \leq \operatorname{rank}(W_0) < n$, 
contradicting~\eqref{equ_scaling_identity}. 
Therefore $W_0 \succ 0$.

\smallskip
\noindent\textbf{Step~3: Symmetric DS-scalability.}
Set $C := W_0 \succ 0$. Then \eqref{equ_scaling_identity} gives 
$\eta(C) = C^{-1}$, which is exactly symmetric DS-scalability 
of $\eta = \eta_S$ (Definition~\ref{defi_symmetric_DS}).
\end{proof}

\subsection{Proof of the main theorem} 
\label{section_proof_Thm_A}

\begin{theo}[= Theorem~A]
\label{theo_main_general}
Let $A = \sum_{i=1}^r A_i x_i$, $A_i^* = A_i$, be a Hermitian
matrix pencil, $S = \sum A_i \otimes s_i$ the associated matrix
semicircle, and $\eta\colon X \mapsto \sum A_i X A_i$ its
covariance map. The following are equivalent:
\begin{enumerate}[label=\textup{(\roman*)}]
\item $A$ is LR-semisimple.
\item $S$ is non-singular at $t = 0$.
\item $\eta$ is symmetrically DS-scalable.
\end{enumerate}
Moreover, when these hold, the spectral density satisfies
\[
  f(0) = \frac{1}{\pi}\,\tr(C)
  \;\geq\; \frac{1}{\pi}\,\Capa(\eta)^{-1/(2n)}
  \;=\; \frac{1}{\pi\sqrt{e}\,\Delta(S)},
\]
where $C$ is the unique trace minimizer of
$\bS = \{W \succ 0 : \eta(W)\,W = I\}$.
\end{theo}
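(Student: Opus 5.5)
The proof assembles results already established; we close the cycle (i) $\Rightarrow$ (iii) $\Rightarrow$ (ii) $\Rightarrow$ (iii) $\Rightarrow$ (i) and then read off the density formula.

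For (i) $\Rightarrow$ (iii) we invoke Proposition~\ref{propo_LR_C}. It combines Proposition~\ref{propo_LR-semisimple_DS-scalable}, which gives DS-scalability via blockwise Gurvits capacity, with the geodesic reflection of Proposition~\ref{prop:geodesic_reflection}. The reflection turns the resulting 2-cycle $(C_1,C_2)$ of $F(X)=\eta(X)^{-1}$ into the fixed point $C_1\# C_2$. Hermiticity of the $A_i$ is exactly what makes $\eta$ self-adjoint, which these results require.

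For (iii) $\Rightarrow$ (i), symmetric DS-scalability is a special case of DS-scalability, taking $c_1=c_2=C^{1/2}$. Proposition~\ref{propo_DS-scalable_LR-semisimple} then gives LR-semisimplicity, via the splitting lemma and the induction of Proposition~\ref{propo_DS_LR_semisimple}.

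For (iii) $\Rightarrow$ (ii), symmetric scalability means $\bS\neq\emptyset$, so Theorem~\ref{theo:nonsingularity} applies. It gives non-singularity at $t_0=0$, uniqueness of the trace minimizer $C\in\bS$, and the formula $f(0)=\frac1\pi\tr(C)$ via \eqref{equ_density_W}. The same theorem gives the lower bound $\tr(C)\ge\det(C)^{1/n}$ by AM--GM. Here one should check that $\det(C)^{1/n}=\Capa(\eta)^{-1/(2n)}$. Since $C$ satisfies $\eta(\eta(C)^{-1})=C^{-1}$, Lemma~\ref{lemma_exact_fixed_points_capacity} gives $\Capa(\eta)=\det\eta(C)/\det C=\det(C)^{-2}$, so $\det(C)^{1/n}=\Capa(\eta)^{-1/(2n)}$. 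The final equality with $(\sqrt e\,\Delta(S))^{-1}$ is imported from \cite{mai_speicher2024}, as stated in the introduction.

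For (ii) $\Rightarrow$ (iii), Proposition~\ref{propo_regularity_implies_DS} applies. The boundary limit $W_0$ is Hermitian and positive semidefinite by Lemma~\ref{lemma_W_symmetry}, and it satisfies $\eta(W_0)W_0=I$, which forces $W_0\succ0$.

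The only point needing care is consistency in the "Moreover" clause. In case (ii), the density formula should also be attributed to the trace minimizer, but (ii) implies (iii), so Theorem~\ref{theo:nonsingularity} applies anyway. The non-tangential limit is then unique, so the $W_0$ found in the converse direction must equal the trace minimizer $C$. No step is a genuine obstacle; the main work is checking that the hypotheses of each cited result (self-adjointness, nonemptiness of $\bS$) are met, with all heavy lifting done in Sections~\ref{sec:algebraic}--\ref{sec:analytic_engine}.
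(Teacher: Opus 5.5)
Your proposal is correct and follows the paper's proof essentially verbatim: the four implications are obtained from the same results, namely Propositions~\ref{propo_LR_C}, \ref{propo_DS-scalable_LR-semisimple}, \ref{propo_regularity_implies_DS} and Theorem~\ref{theo:nonsingularity}, and the density formula and lower bound are read off from Theorem~\ref{theo:nonsingularity}. Your explicit check that $\Capa(\eta)=\det(C)^{-2}$ via Lemma~\ref{lemma_exact_fixed_points_capacity} (using $\eta^\ast=\eta$ and $\eta(C)=C^{-1}$) usefully supplies the identity $\det(C)^{1/n}=\Capa(\eta)^{-1/(2n)}$, which the paper asserts without comment.
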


\begin{proof}
Note that since the Kraus operators $A_i$ are Hermitian, 
$\eta$ is self-adjoint: $\eta^* = \eta$.

\medskip\noindent
\textbf{(i) $\Rightarrow$ (iii).}\;
If $A$ is LR-semisimple, then $\eta$ is symmetrically 
DS-scalable by Proposition~\ref{propo_LR_C}.

\medskip\noindent
\textbf{(iii) $\Rightarrow$ (i).}\;
Symmetric DS-scalability implies DS-scalability, and 
Proposition~\ref{propo_DS-scalable_LR-semisimple} gives 
that $A$ is LR-semisimple.

\medskip\noindent
\textbf{(iii) $\Rightarrow$ (ii).}\;
If $\eta$ is symmetrically DS-scalable, then 
$\bS \neq \emptyset$, and 
Theorem~\ref{theo:nonsingularity} gives that $S$ is 
non-singular at $t = 0$.

\medskip\noindent
\textbf{(ii) $\Rightarrow$ (iii).}\;
If $S$ is non-singular at $t = 0$, then $\eta$ is 
symmetrically DS-scalable by 
Proposition~\ref{propo_regularity_implies_DS}.

\medskip\noindent
The density formula and the lower bound are from 
Theorem~\ref{theo:nonsingularity}.
\end{proof}

\subsection{Corollaries} 
\label{section_corollaries}

\begin{coro}[= Theorem~B]
\label{theo_main_unsplittable}
  If $A$ is unsplittable with $A_i^\ast = A_i$, then $S$ is
  non-singular and $C$ is the \emph{unique} solution of
  $\eta(C) = C^{-1}$.
  \end{coro}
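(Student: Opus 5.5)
The plan is to deduce this corollary directly from earlier results, so no new analysis is needed. First, by the hierarchy in Remark~\ref{rem:hierarchy}, an unsplittable pencil is LR-semisimple. This holds trivially: take $L = R = I$ and $k = 1$ in Definition~\ref{defi:LR-semisimple}. Theorem~\ref{theo_main_general} then says that $\eta$ is symmetrically DS-scalable, that $S$ is non-singular at $t = 0$, and that $f(0) = \frac{1}{\pi}\tr(C)$ for the trace minimizer $C$ of $\bS$.

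Second, I would upgrade ``trace minimizer'' to ``unique solution.'' Since $A_i^* = A_i$, the covariance map $\eta$ is self-adjoint. By Theorem~\ref{theo_pencil_map_dictionary}(ii), unsplittability of $A$ is equivalent to indecomposability of $\eta$. Lemma~\ref{lem:indecomposable_sym_scalable} then gives a unique $C \succ 0$ with $\eta(C) = C^{-1}$. The set $\bS = \{W \succ 0 : \eta(W) W = I\}$ is exactly the set of positive definite solutions of $\eta(W) = W^{-1}$, so $\bS = \{C\}$. The trace minimizer in Theorem~\ref{theo:nonsingularity} is therefore this $C$, and the density formula holds with the unique solution.

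Third, the statement says ``non-singular'' without qualification, which by Definition~\ref{defi_nonsingularity} means non-singular at every $t_0 \in \mathbb{R}$, not just at $0$. This is the step I expect to be the real obstacle, because Theorem~A only controls $t_0 = 0$. I would first check whether the intended reading is non-singularity at $t = 0$, matching Theorem~A. If the full statement is required, I would try a local implicit-function argument at $t_0 \ne 0$. When $u = it_0$ with $t_0 \neq 0$, equation~\eqref{eq:HMS} is no longer scale-degenerate. One would then need to show that the boundary value $W(it_0+0^+)$ exists and that the linearization $K \mapsto K + W\eta(K)W + \ldots$ is invertible there. Proposition~\ref{prop:singularity_localization} already bounds the density away from $0$, and Proposition~\ref{propo_algebraicity} reduces the remaining issue to finitely many algebraic points.

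Full non-singularity at those finitely many points (for instance square-root edges) is not implied by the material so far. So I would state the corollary's conclusion as non-singularity at $t = 0$ together with uniqueness of $C$, and flag the global claim as relying on the interpretation of Theorem~B's ``non-singular'' as being at the origin.
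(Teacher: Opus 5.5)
Your first two steps are exactly the paper's proof. Unsplittable implies LR-semisimple, and Theorem~\ref{theo_main_general} then gives non-singularity at $t=0$. Theorem~\ref{theo_pencil_map_dictionary}(ii) together with Lemma~\ref{lem:indecomposable_sym_scalable} gives uniqueness of $C$. Your observation that this forces $\bS=\{C\}$, so the trace minimizer of Theorem~A is this $C$, is a small but welcome addition. The paper's proof also simply says ``Theorem~A gives that $S$ is non-singular.'' So the discrepancy you flag with the literal meaning of Definition~\ref{defi_nonsingularity} (every $t_0\in\R$) is present in the paper's argument as well.

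Where your proposal falls short is in concluding that global non-singularity is not implied by the material available. The implicit-function route you sketch for $t_0\neq 0$ is also the wrong tool: at a square-root edge the linearization of \eqref{eq:HMS} is singular, so it fails exactly where it is needed.

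The missing idea is to use the matrix-level bound of Proposition~\ref{prop:L2_bound} directly, not only its scalar consequence Proposition~\ref{prop:singularity_localization}. That bound gives $\|W(u)\|_2=\|G_S(iu)\|_2\le 2/|u|$. So for $t_0\neq 0$ and $\delta<|t_0|/2$, $W$ is bounded on $\{|u-it_0|<2\delta,\ \Re u>0\}$, which is condition (i).

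By the proof of Proposition~\ref{propo_algebraicity}, each entry of $W$ is an algebraic function of $u$. At all but finitely many points of the imaginary axis it continues analytically across. At each of the finitely many critical points $it$, on a small half-disc it coincides with one determination of a Puiseux series in $(u-it)^{1/m}$. Boundedness rules out negative powers, so the entry has a limit (unrestricted, not merely nontangential) and extends continuously to the boundary. This gives (ii) and continuity of $t\mapsto W(it+0^+)$ near every $t_0\neq 0$, for every matrix semicircle. Combined with Theorem~A at $t_0=0$, it yields the corollary in its literal, global form. Analyticity is what fails at edges, but Definition~\ref{defi_nonsingularity} does not require it.
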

  
  \begin{proof}
  Since an unsplittable pencil is LR-semisimple, Theorem~\ref{theo_main_general} gives that $S$ is non-singular. An unsplittable pencil corresponds to an indecomposable CP map by Theorem \ref{theo_pencil_map_dictionary}(ii),  and uniqueness of scaling matrix $C$ follows from Lemma \ref{lem:indecomposable_sym_scalable}.
  \end{proof}
  
\begin{coro}[Direct sums of unsplittables]
  If $A$ is a direct sum of unsplittable Hermitian pencils,
  then $S$ is non-singular.
  \end{coro}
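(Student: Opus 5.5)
The corollary says that a direct sum of unsplittable Hermitian pencils gives a non-singular semicircle. The plan is to reduce it to Theorem~\ref{theo_main_general}, and then to handle the fact that ``non-singular'' in Definition~\ref{defi_nonsingularity} means non-singular at every $t_0\in\mathbb{R}$, not only at $t_0=0$.

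\textbf{Step 1: the point $t_0 = 0$.} Write $A = \bigoplus_j B^{(j)}$ with each $B^{(j)}$ unsplittable and Hermitian. Then $A$ is LR-semisimple by Definition~\ref{defi:LR-semisimple}, taking $L=R=I$. Theorem~\ref{theo_main_general}, (i)$\Rightarrow$(ii), gives non-singularity at $t_0=0$. Alternatively, each block is indecomposable by Theorem~\ref{theo_pencil_map_dictionary}(ii), so Lemma~\ref{lem:indecomposable_sym_scalable} gives $C_j\succ0$ with $\eta_{B^{(j)}}(C_j)=C_j^{-1}$. Then $C=\bigoplus_j C_j$ satisfies $\eta_A(C)=C^{-1}$, because $\eta_A$ acts blockwise on block-diagonal inputs. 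Theorem~\ref{theo:nonsingularity} then applies directly.

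\textbf{Step 2: the points $t_0 \neq 0$.} Here I would use the block structure more strongly. Since every $A_i$ is block diagonal, $S=\bigoplus_j S_j$ with $S_j=\sum_i B^{(j)}_i\otimes s_i$. Hence $G_S(zI)=\bigoplus_j G_{S_j}(zI)$ and $W=\bigoplus_j W_j$, and it suffices to treat a single unsplittable block. For an unsplittable (indecomposable) block I would use the following facts:
\begin{itemize}
\item Away from $0$, the density is controlled by Proposition~\ref{propo_algebraicity} and Proposition~\ref{prop:singularity_localization}: $W$ is real-analytic off a finite set $F$, and the density is bounded near any $x_0\neq0$.
\item Non-singularity in the strong sense (a continuous boundary limit of the full matrix $W$) requires excluding poles of the entries of $G$ at points of $F\setminus\{0\}$. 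Fullness excludes an atom only at $0$. At $x_0\ne0$, however, an atom of the scalar measure would contradict the bound $\|G_S(z)\|_2\le 2/|z|$ from Proposition~\ref{prop:L2_bound}. The same bound shows that $W$ is bounded on Stolz cones near $it_0$ for $t_0\neq0$, which gives condition (i) of the definition.
\item Existence of a continuous boundary limit then follows from algebraicity: a bounded algebraic function has a limit along non-tangential approach at each point. Continuity of $t\mapsto W(it+0^+)$ can fail only at finitely many branch points, where the boundary values of a bounded algebraic branch still extend continuously (Puiseux expansions with nonnegative exponents).
\end{itemize}

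\textbf{Main obstacle.} The hard part is Step 2. The paper's machinery only addresses $t_0=0$, and bounded algebraic boundary values could in principle have cusp-like but still continuous behaviour at edges. What must be verified is that Puiseux series of bounded branches contain no negative powers, so that the boundary limit exists and is continuous. I would argue this branch by branch, using the uniform $2/|z|$ bound on $\|G\|_2$ near each $x_0\in F\setminus\{0\}$. If that proves too delicate, I would fall back to stating the corollary as non-singularity at $t_0=0$, which is exactly what Step 1 establishes and is the meaning used in Theorem~A.
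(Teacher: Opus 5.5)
Your Step~1 is the paper's proof. The paper simply observes that a direct sum of unsplittable pencils is LR-semisimple (with $L=R=I$) and invokes Theorem~\ref{theo_main_general}, (i)$\Rightarrow$(ii). Your alternative in Step~1 is a slightly leaner route. Because the blocks are Hermitian and the decomposition is literal, the blockwise solutions $C_j$ from Lemma~\ref{lem:indecomposable_sym_scalable} assemble into $C=\bigoplus_j C_j$ with $\eta_A(C)=C^{-1}$. So Theorem~\ref{theo:nonsingularity} applies without the geodesic-reflection step (Proposition~\ref{prop:geodesic_reflection}). Theorem~A needs that step only to symmetrize a general DS-scaling coming from $L\neq R^*$.

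Step~2 goes beyond the paper. Its one-line proof only yields non-singularity at $t_0=0$, although Definition~\ref{defi_nonsingularity} asks for every $t_0$. Your argument is sound, and the obstacle you flag is not a real one.

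\begin{itemize}
\item On the half-disk $\{|z-x_0|<|x_0|/2,\ \Im z>0\}$, Proposition~\ref{prop:L2_bound} gives $\|G_S(z)\|_2\le 4/|x_0|$.
\item Each entry of $G_S$ is algebraic; this is what the proof of Proposition~\ref{propo_algebraicity} actually establishes.
\item Shrink the disk so that $x_0$ is its only critical point. Then each entry is a single Puiseux branch there.
\item A negative leading exponent would force unboundedness as $z\to x_0$ inside the half-disk. Hence all exponents are nonnegative, and each entry extends continuously to the closed half-disk.
\end{itemize}

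This gives conditions (i), (ii) and continuity of $t\mapsto W(it+0^+)$ near every $t_0\neq 0$, so no fallback is needed. The paper uses the same two tools only for the scalar density, in the Global regularity corollary. Working with the matrix entries, as you do, is what the definition requires.

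Step~2 uses neither unsplittability nor the block structure, so the reduction to a single block is superfluous. Every matrix semicircle is non-singular at every $t_0\neq0$, and the corollary's real content lies entirely at $t_0=0$.
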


\begin{proof} Direct sum of unsplittables is LR-semisimple
  by definition. Main Theorem applies.
  \end{proof}
  
  \begin{coro}[Global regularity]
If $A$ is LR-semisimple, then the spectral density $f$ 
of $S$ is bounded and continuous on $\bR$, real-analytic 
in a neighborhood of $x = 0$, and satisfies $f(0) > 0$.
\end{coro}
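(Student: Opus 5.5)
The plan has four parts, taken in order.

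\textbf{Local statements from Theorem~A.} Since $A$ is LR-semisimple, Theorem~\ref{theo_main_general} gives condition~(ii), non-singularity at $t_0=0$. It also gives, via Theorem~\ref{theo:nonsingularity}, that $W(u)$ extends real-analytically through $u=0$ with $W(0^+)=C\succ 0$. By \eqref{equ_density_W}, $f(0)=\frac1\pi\tr(C)>0$, because $C\succ 0$ forces $\tr(C)>0$. Continuity of $u\mapsto W(u)$ near $0$, together with Stieltjes inversion, then makes $f$ real-analytic on a neighborhood $(-\delta,\delta)$ of $x=0$.

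\textbf{Reduction of global boundedness and continuity.} The natural ingredients here are Remark~\ref{rem_algebraicity_context} and Proposition~\ref{prop:singularity_localization}. LR-semisimplicity implies fullness by Remark~\ref{rem:hierarchy}. Hence $\mu_S$ has no atoms, so $\mu_S=f\,dx$, and $f$ is real-analytic off a finite set $F$ of algebraic branch points (Proposition~\ref{propo_algebraicity}). On $|x|\ge\delta$, Proposition~\ref{prop:singularity_localization} gives $f(x)\le 2/(\pi\delta)$ away from $F$. Combined with the local analyticity near $0$, this shows $f$ is bounded on $\mathbb{R}\setminus F$. Outside a large compact set $f$ vanishes, since $S$ is a bounded operator and $\mu_S$ is compactly supported.

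\textbf{Continuity at the points of $F\setminus\{0\}$.} This is the main obstacle. At $x_0\in F$ the function $H=\tr G_S$ is algebraic and, by Proposition~\ref{prop:L2_bound}, bounded near $x_0$ from the upper half-plane. It therefore has a Puiseux expansion $H(z)=\sum_{k\ge0}c_k(z-x_0)^{k/m}$ with no negative powers on the physical branch. Taking imaginary parts shows that both one-sided limits of $f$ at $x_0$ exist and coincide, since each is $\frac1\pi\Im(-c_0)$. So $f$ extends continuously across $x_0$. I would present this as the key lemma: a bounded algebraic Stieltjes transform has a continuous boundary density.

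\textbf{Assembling the pieces.} Together these give that $f$ is bounded and continuous on $\mathbb{R}$, real-analytic near $0$, and satisfies $f(0)>0$. If the Puiseux step proves delicate, the fallback is to cite \citet{aek2020} locally. The bound $|H|\le 2/|z|$ already excludes the blow-up singularities, so only the continuity of the limit needs to be argued.
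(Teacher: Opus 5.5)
Your proposal is correct and takes essentially the same route as the paper. Theorem~A gives the local statements at $0$, Proposition~\ref{prop:singularity_localization} gives boundedness away from $0$, and algebraicity (Proposition~\ref{propo_algebraicity}) gives continuity at the finitely many exceptional points. Your Puiseux argument on $H=\tr G_S$ usefully sharpens the paper's one-line claim that a bounded algebraic function extends continuously: the two one-sided limits of $f$ agree precisely because both are boundary values of a single branch approached from $\mathbb{C}^+$.
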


\begin{proof}
The main theorem gives matrix-level non-singularity at 
$t_0 = 0$, hence real-analyticity and $f(0) > 0$. By 
Proposition~\ref{prop:singularity_localization}, $f$ is 
bounded at every $x \neq 0$. By 
Proposition~\ref{propo_algebraicity}, $f$ is real-analytic 
away from finitely many points. A bounded algebraic 
function at an isolated singularity extends continuously, 
so $f$ is continuous on all of $\bR$.
\end{proof}
  
  \begin{remark}[Doubly stochastic case]
When $\eta$ is already doubly stochastic, the Main Theorem 
applies with $C = I$ (the solution of 
$\eta(C) = C^{-1}$), giving $f(0) = \frac{1}{\pi}$. In 
fact, the spectral distribution of $S$ is the standard 
Wigner semicircle law: the ansatz $W(u) = w(u)\,I$ reduces 
Speicher's equation~\eqref{eq:HMS} to the scalar equation 
$w(u)^2 + u\,w(u) = 1$ (using unitality $\eta(I) = I$), 
whose unique solution with $\Re\, w > 0$ is 
$w(u) = \frac{-u + \sqrt{u^2 + 4}}{2}$. By HFS 
uniqueness, $W(u) = w(u)\,I$, and Stieltjes inversion 
recovers the semicircle density.
\end{remark}

\section{Discussion and open questions} 
\label{sec:discussion}

\begin{remark}\textbf{Geometric invariant theory perspective}
\label{rem:GIT}
The algebraic results of \S3 admit a natural interpretation in 
the framework of geometric invariant theory (GIT), as developed 
by \citet{ggow2020}; see also \cite{burgisser_etal2018, burgisser_etal2019} for a 
comprehensive treatment.

The group $G = GL_n(\bC) \times GL_n(\bC)$ acts on $r$-tuples 
of matrices $(A_1, \ldots, A_r) \in M_n(\bC)^r$ by
\[
(g, h) \cdot (A_1, \ldots, A_r) 
= (g\,A_1\,h^{-1},\, \ldots,\, g\,A_r\,h^{-1}).
\]
Under this action, LR-equivalence classes of pencils correspond 
to $G$-orbits. The key entries of the GIT dictionary are:
\begin{itemize}[leftmargin=2em]
\item \textbf{Capacity as a Kempf--Ness functional.}\; 
  The log-capacity $\log\Capa(\eta_A)$ plays the role of the 
  Kempf--Ness function, which in GIT measures the distance 
  of an orbit from being closed. Its critical points are the 
  DS-scalings of $\eta_A$.

\item \textbf{DS-scalability as polystability.}\;
  A pencil $A$ is DS-scalable (equivalently, 
  $\Capa(\eta_A) > 0$ and the capacity infimum is attained) 
  if and only if the $G$-orbit of $(A_1, \ldots, A_r)$ is 
  closed in the semistable locus --- the GIT notion of 
  polystability. In this language, 
  Proposition~\ref{propo_DS-scalable_LR-semisimple} says that 
  LR-semisimplicity is equivalent to polystability.

\item \textbf{Operator Sinkhorn iteration as gradient descent.}\;
  The alternating row/column normalization of 
  Definition~\ref{defi_OSI} can be interpreted as Riemannian 
  gradient descent on the Kempf--Ness function over the 
  symmetric space $GL_n/U_n$. The GGOW algorithm 
  \cite{ggow2020} provides polynomial-time convergence 
  guarantees for this iteration.
\end{itemize}

We have not used this perspective in the proofs, but it 
provides useful geometric intuition for the algebraic 
results of~\S3.
\end{remark}

\medskip
The main theorem characterizes non-singularity at $x = 0$ 
for unbiased self-adjoint matrix semicircles. Several 
natural extensions remain open.

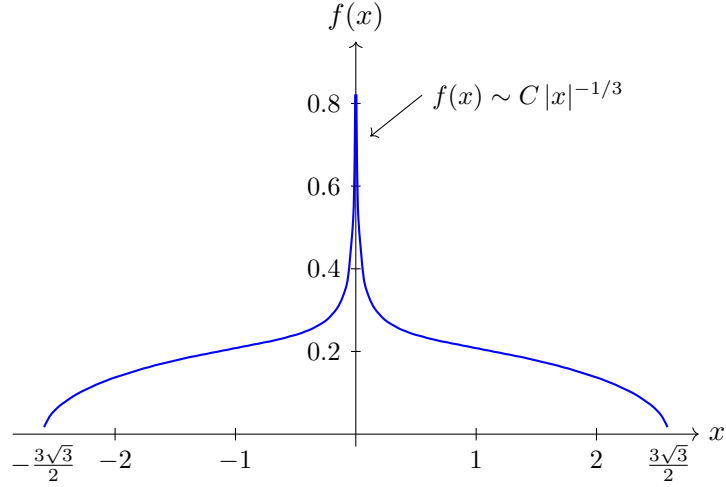
\begin{figure}[ht]
\centering
\begin{tikzpicture}[xscale=1.6, yscale=5.5]
  \draw[->] (-2.85,0) -- (2.85,0) node[right] {$x$};
  \draw[->] (0,-0.03) -- (0,0.95) node[above] {$f(x)$};
  \foreach \x in {-2,-1,1,2}
    \draw (\x,0.015) -- (\x,-0.015) node[below]{\small $\x$};
  \foreach \y in {0.2, 0.4, 0.6, 0.8}
    \draw (0.04,\y) -- (-0.04,\y) node[left]{\small $\y$};
  \draw[thick, blue] plot[smooth] coordinates {
    (-2.590,0.017) (-2.546,0.042) (-2.503,0.057)
    (-2.459,0.069) (-2.416,0.078) (-2.372,0.087)
    (-2.329,0.095) (-2.285,0.102) (-2.242,0.108)
    (-2.198,0.114) (-2.155,0.120) (-2.111,0.125)
    (-2.068,0.130) (-2.024,0.135) (-1.981,0.139)
    (-1.937,0.143) (-1.894,0.147) (-1.850,0.151)
    (-1.806,0.155) (-1.763,0.159) (-1.719,0.162)
    (-1.676,0.166) (-1.632,0.169) (-1.589,0.172)
    (-1.545,0.175) (-1.502,0.178) (-1.458,0.181)
    (-1.415,0.184) (-1.371,0.187) (-1.328,0.189)
    (-1.284,0.192) (-1.241,0.194) (-1.197,0.197)
    (-1.154,0.199) (-1.110,0.202) (-1.066,0.204)
    (-1.023,0.207) (-0.979,0.209) (-0.936,0.212)
    (-0.892,0.214) (-0.849,0.217) (-0.805,0.219)
    (-0.762,0.222) (-0.718,0.224) (-0.675,0.227)
    (-0.631,0.230) (-0.588,0.233) (-0.544,0.237)
    (-0.501,0.240) (-0.457,0.244) (-0.414,0.249)
    (-0.370,0.254) (-0.326,0.261) (-0.283,0.268)
    (-0.239,0.277) (-0.196,0.290) (-0.152,0.306)
    (-0.109,0.332) (-0.065,0.379)
    (-0.022,0.519) (-0.010,0.660) (-0.005,0.822)
  };
  \draw[thick, blue] plot[smooth] coordinates {
    (0.005,0.822) (0.010,0.660) (0.022,0.519)
    (0.065,0.379) (0.109,0.332) (0.152,0.306)
    (0.196,0.290) (0.239,0.277) (0.283,0.268)
    (0.326,0.261) (0.370,0.254) (0.414,0.249)
    (0.457,0.244) (0.501,0.240) (0.544,0.237)
    (0.588,0.233) (0.631,0.230) (0.675,0.227)
    (0.718,0.224) (0.762,0.222) (0.805,0.219)
    (0.849,0.217) (0.892,0.214) (0.936,0.212)
    (0.979,0.209) (1.023,0.207) (1.066,0.204)
    (1.110,0.202) (1.154,0.199) (1.197,0.197)
    (1.241,0.194) (1.284,0.192) (1.328,0.189)
    (1.371,0.187) (1.415,0.184) (1.458,0.181)
    (1.502,0.178) (1.545,0.175) (1.589,0.172)
    (1.632,0.169) (1.676,0.166) (1.719,0.162)
    (1.763,0.159) (1.806,0.155) (1.850,0.151)
    (1.894,0.147) (1.937,0.143) (1.981,0.139)
    (2.024,0.135) (2.068,0.130) (2.111,0.125)
    (2.155,0.120) (2.198,0.114) (2.242,0.108)
    (2.285,0.102) (2.329,0.095) (2.372,0.087)
    (2.416,0.078) (2.459,0.069) (2.503,0.057)
    (2.546,0.042) (2.590,0.017)
  };
  \draw[<-, thin] (0.12,0.72) -- (0.55,0.82)
    node[right]{\small $f(x) \sim C\,|x|^{-1/3}$};
  \node[below] at (-2.598,0) {\small $-\frac{3\sqrt{3}}{2}$};
  \node[below] at (2.598,0) {\small $\frac{3\sqrt{3}}{2}$};
\end{tikzpicture}
\caption{Spectral density of the matrix semicircle 
$S = A_1 \otimes s_1 + A_2 \otimes s_2$ for the 
non-LR-semisimple pencil of 
Example~\textup{\ref{exa:full_not_LR}}. The density 
has an integrable $|x|^{-1/3}$ cusp at the origin.}
\label{fig:singular_density}
\end{figure}

\medskip\noindent
\textbf{Singularities when $A$ is not LR-semisimple.}\;
When the pencil is full but not LR-semisimple, the main 
theorem implies that $S$ is singular at $t = 0$. The 
following example shows that the singularity can take the 
form of an integrable algebraic cusp.

\begin{exa}[Singularity for a non-LR-semisimple pencil]
\label{exa:singular_density}
Consider the pencil from Example~\ref{exa:full_not_LR}:
\[
A_1=\begin{pmatrix}0&1\\1&0\end{pmatrix},\qquad
A_2=\begin{pmatrix}0&0\\0&1\end{pmatrix}.
\]
This pencil is full but not LR-semisimple.

Let
\[
G(z)=\begin{pmatrix}a(z)&w(z)\\ w(z)&c(z)\end{pmatrix},
\qquad z\in\C^+,
\]
be the matrix Cauchy transform, so that
\[
zG(z)=I+\eta(G(z))G(z),
\qquad \Im G(z)\prec 0.
\]
That is, $G$ is the Herglotz solution of Speicher's equation. A direct computation gives
\[
\eta\!\left(\begin{pmatrix}a&w\\ w&c\end{pmatrix}\right)
=
\begin{pmatrix}c&w\\ w&a+c\end{pmatrix}.
\]
Let $U=\diag(1,-1)$. Since $UA_1U=-A_1$ and $UA_2U=A_2$, we have
\[
\eta(UXU)=U\eta(X)U \qquad (X\in M_2(\C)).
\]
Hence $UG(z)U$ satisfies the same equation as $G(z)$, and by uniqueness of the Herglotz solution,
$UG(z)U=G(z)$. Therefore $w(z)=0$.

Thus $G(z)=\diag(a(z),c(z))$, and Speicher's equation reduces to
\[
a(z-c)=1,\qquad c(z-a-c)=1.
\]
Set $v:=z-c$. Then $a=1/v$ and $c=z-v$, so
\[
v^3-zv^2+z=0.
\]
Its discriminant is
\[
\Delta(z)=z^2(4z^2-27),
\]
hence the spectral density is supported on
\[
\left[-\frac{3\sqrt3}{2},\,\frac{3\sqrt3}{2}\right].
\]

For $|x|<\frac{3\sqrt3}{2}$, let $v(x)=\alpha(x)+i\beta(x)$ be the boundary-value root selected by
$\Im G(x+i0)\prec 0$; then $\beta(x)>0$. Since
\[
\tr G(x+i0)=\frac12\left(\frac1{v(x)}+x-v(x)\right),
\]
Stieltjes inversion gives
\[
f(x)
=
-\frac1\pi \Im \tr G(x+i0)
=
\frac{1}{2\pi}\Im\!\left(v(x)-\frac1{v(x)}\right)
=
\frac{\beta(x)}{2\pi}\left(1+\frac1{|v(x)|^2}\right).
\]
In particular, $f$ is real-analytic on
\[
\left(-\frac{3\sqrt3}{2},0\right)\cup
\left(0,\frac{3\sqrt3}{2}\right).
\]

Near $x=0$, the cubic gives $v(x)^3\sim -x$, hence
$|v(x)|\asymp |x|^{1/3}$ and $\beta(x)\asymp |x|^{1/3}$. Therefore
\[
f(x)\asymp |x|^{-1/3}\qquad (x\to 0).
\]
Thus $f$ has an integrable cusp singularity at $0$, in contrast with the bounded density from Theorem~\ref{theo_main_general} for LR-semisimple pencils.
\end{exa}

Is this representative of the general case? By the 
algebraicity of the Cauchy transform 
(Proposition~\ref{propo_algebraicity}), the singularity 
at $0$ is always algebraic, so $f(u) \sim C\,|u|^{-\alpha}$ 
for some rational $\alpha \in (0, 1)$ (integrability forces 
$\alpha < 1$ since there is no atom at $0$ for full pencils). 
It would be interesting to determine which exponents 
$\alpha$ can occur and whether they are controlled by 
the pencil structure.  

\medskip\noindent
\textbf{Biased matrix semicircles.}\;
For $S = A_0 + \sum_{i=1}^r A_i \otimes s_i$ with 
$A_0 \neq 0$, the special role of $x = 0$ is lost. 
The natural question is whether singularities of the 
density on the entire real line can be characterized 
algebraically in terms of the pencil 
$A_0 + \sum A_i x_i$.

\medskip\noindent
\textbf{Non-self-adjoint case.}\;
When the matrices $A_i$ are not required to be Hermitian, 
the spectral distribution is replaced by the Brown measure 
of $S$, which is a probability measure on $\bC$. The 
self-adjointness $\eta^* = \eta$ was used in several places 
(the geodesic reflection theorem, the symmetry 
$W(\bar{u}) = W(u)^*$, and the Lyapunov--Schmidt analysis), 
so new ideas would be needed.


\newpage
\appendix


\section{Tools from operator theory} 

\begin{lemma}[Kadison--Schwarz inequality]
\label{lem:kadison_schwarz}
Let $\Phi: M_n(\bC) \to M_n(\bC)$ be a unital completely positive map. 
Then for any $A \in M_n(\bC)$,
\[
\Phi(A)^*\Phi(A) \leq \Phi(A^*A).
\]
In particular, if $L$ is Hermitian, then $\Phi(L)^2 \leq \Phi(L^2)$.
\end{lemma}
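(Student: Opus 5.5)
The plan is to reduce to a positivity statement about a block operator matrix, using the fact that unital completely positive maps send positive $2\times 2$ block matrices to positive block matrices. Fix $A \in M_n(\bC)$. The block matrix
\[
M := \begin{bmatrix} I & A \\ A^* & A^*A \end{bmatrix} = \begin{bmatrix} I \\ A^* \end{bmatrix}\begin{bmatrix} I & A \end{bmatrix}
\]
is positive semidefinite in $M_2(\bC)\otimes M_n(\bC)$. Complete positivity applied with $k=2$ gives that $\Phi^{(2)}(M)$ is positive semidefinite. Using unitality $\Phi(I)=I$ and $\Phi(A^*)=\Phi(A)^*$, which holds for positive maps, this reads
\[
\begin{bmatrix} I & \Phi(A) \\ \Phi(A)^* & \Phi(A^*A) \end{bmatrix} \succeq 0 .
\]

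Next I take the Schur complement with respect to the invertible $(1,1)$ block $I$. To make this explicit, I will conjugate by the invertible matrix
\[
T = \begin{bmatrix} I & -\Phi(A) \\ 0 & I \end{bmatrix}.
\]
Then $T^*\,\Phi^{(2)}(M)\,T$ is block diagonal, with blocks $I$ and $\Phi(A^*A) - \Phi(A)^*\Phi(A)$. Congruence preserves positivity, so $\Phi(A^*A) - \Phi(A)^*\Phi(A) \succeq 0$, which is the claimed inequality. For Hermitian $L$, taking $A = L$ gives $\Phi(L)^2 \le \Phi(L^2)$, because $\Phi(L)$ is then Hermitian.

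The only step that needs care is the adjoint-preservation $\Phi(A^*)=\Phi(A)^*$. This follows by writing $A$ as a combination of Hermitian matrices, each a difference of positives. Alternatively, it follows immediately from the Kraus form $\Phi(X)=\sum_i V_i X V_i^*$. With the Kraus form the whole proof can even be done directly, since
\[
\Phi(A^*A) - \Phi(A)^*\Phi(A) = \mathcal{W}^*(A^*A) - \mathcal{W}^* A^* \mathcal{W}\mathcal{W}^* A \mathcal{W}
\]
for the isometry-type column $\mathcal{W}$ with $\mathcal{W}^*\mathcal{W}=I$ (from unitality). This expression is nonnegative because $\mathcal{W}\mathcal{W}^* \le I$. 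I expect no real obstacle; I will present the block-matrix argument as the main proof.
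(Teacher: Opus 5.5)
Your block-matrix proof is correct, and it takes a different route from the paper. The congruence by $T$ does reduce $\begin{bmatrix} I & \Phi(A)\\ \Phi(A)^* & \Phi(A^*A)\end{bmatrix}$ to $I \oplus \bigl(\Phi(A^*A)-\Phi(A)^*\Phi(A)\bigr)$. You also justify $\Phi(A^*)=\Phi(A)^*$ correctly.

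The paper does not use a Schur complement. It builds the isometry $Vx=(K_1^*x,\ldots,K_r^*x)$ from a Kraus decomposition, writes $\Phi(X)=V^*(X\otimes I_r)V$, and reads off
\[
\Phi(A^*A)-\Phi(A)^*\Phi(A)=V^*\bigl[(I-P)(A\otimes I)\bigr]^*\bigl[(I-P)(A\otimes I)\bigr]V\succeq 0,\qquad P=VV^*.
\]
This is exactly your fallback sketch, once two slips are fixed. The amplifications $A\otimes I_r$ and $A^*\otimes I_r$ are missing, and the first term should be $\mathcal{W}^*(A^*A\otimes I)\mathcal{W}$, not $\mathcal{W}^*(A^*A)$.

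Your main route uses only 2-positivity and Hermiticity preservation, with no Kraus or Stinespring representation. It therefore proves the inequality for every unital 2-positive map, which is more general than what the paper needs.

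The paper's route is chosen for what comes next. In the equality case, the same $V$ and $P$ immediately give $(I-P)(H\otimes I)V=0$, that is, the intertwining relation $(H\otimes I)V=V\Phi(H)$. This relation is reused directly in the proof of Choi's multiplicative domain theorem (Theorem~\ref{theo_choi}). Getting that conclusion from the Schur-complement picture would need an additional block-matrix positivity argument.
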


\begin{proof}
By the Choi--Kraus theorem, write $\Phi(X) = \sum_{i=1}^r K_i X K_i^*$.
Define the isometry $V: \bC^n \to (\bC^n)^{\oplus r}$ by 
$Vx = (K_1^* x, \ldots, K_r^* x)$. Unitality gives $V^*V = I_n$, 
and $\Phi(X) = V^*(X \otimes I_r)V$. Let $P = VV^*$ denote the 
orthogonal projection onto the range of $V$. Then
\begin{align*}
\Phi(A^*A) - \Phi(A)^*\Phi(A)
&= V^*(A^*A \otimes I)V - V^*(A^* \otimes I)P(A \otimes I)V \\
&= V^*\bigl[(A^* \otimes I)(I - P)(A \otimes I)\bigr]V \\
&= V^*\bigl[(I - P)(A \otimes I)\bigr]^*\bigl[(I - P)(A \otimes I)\bigr]V 
\;\geq\; 0,
\end{align*}
where we used $(I - P) = (I - P)^2 = (I-P)^*$.
\end{proof}

\begin{theo}[Choi's multiplicative domain theorem]
\label{theo_choi}
Let $\Phi: M_n(\bC) \to M_n(\bC)$ be a unital completely positive map 
and let $H \in M_n(\bC)$ be Hermitian. If equality holds in the 
Kadison--Schwarz inequality, 
\[
\Phi(H^2) = \Phi(H)^2,
\]
then $\Phi(XH) = \Phi(X)\Phi(H)$ and $\Phi(HX) = \Phi(H)\Phi(X)$ 
for all $X \in M_n(\bC)$. In particular, $\Phi$ restricts to a 
$*$-homomorphism on the $C^*$-algebra $C^*(H)$ generated by $H$ 
and $I$.
\end{theo}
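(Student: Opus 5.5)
The plan is to use the Stinespring-type dilation already built in the proof of Lemma~\ref{lem:kadison_schwarz}. By Choi--Kraus write $\Phi(X)=\sum_i K_iXK_i^*$ and set $Vx=(K_1^*x,\dots,K_r^*x)$. Unitality gives $V^*V=I_n$ and $\Phi(X)=V^*(X\otimes I_r)V$. Let $P=VV^*$ and put $T:=(I-P)(H\otimes I)V$. The computation in that lemma gives
\[
\Phi(H^2)-\Phi(H)^2 = T^*T,
\]
since $H^*=H$. The hypothesis $\Phi(H^2)=\Phi(H)^2$ therefore forces $T=0$. In other words $(H\otimes I)V=P(H\otimes I)V=V\,V^*(H\otimes I)V=V\Phi(H)$, so $V$ intertwines $H\otimes I$ with $\Phi(H)$. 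Taking adjoints gives $V^*(H\otimes I)=\Phi(H)V^*$.

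The two multiplicativity identities then follow directly from this intertwining. For arbitrary $X$,
\[
\Phi(XH)=V^*(X\otimes I)(H\otimes I)V=V^*(X\otimes I)V\,\Phi(H)=\Phi(X)\Phi(H).
\]
Similarly, $\Phi(HX)=V^*(H\otimes I)(X\otimes I)V=\Phi(H)V^*(X\otimes I)V=\Phi(H)\Phi(X)$. No positivity beyond $T^*T\ge0$ is needed here; the key observation is that $T^*T=0$ implies $T=0$.

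For the final assertion, let $\mathcal{M}=\{a:\Phi(Xa)=\Phi(X)\Phi(a),\ \Phi(aX)=\Phi(a)\Phi(X)\ \forall X\}$ be the multiplicative domain.
\begin{itemize}
\item $\mathcal{M}$ contains $I$ (by unitality) and $H$ (by the step above).
\item It is a linear subspace.
\item It is closed under products: if $a,b\in\mathcal{M}$, then $\Phi(Xab)=\Phi(Xa)\Phi(b)=\Phi(X)\Phi(a)\Phi(b)=\Phi(X)\Phi(ab)$, and symmetrically on the left.
\item It is closed under adjoints, because $\Phi$ is $*$-preserving: $\Phi(Xa^*)=\Phi(aX^*)^*=(\Phi(a)\Phi(X^*))^*=\Phi(X)\Phi(a^*)$.
\end{itemize}
Hence $\mathcal{M}$ is a unital $*$-subalgebra, automatically closed in finite dimensions, and so $C^*(H)\subseteq\mathcal{M}$. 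On $C^*(H)$ the map $\Phi$ is then linear, multiplicative and $*$-preserving, i.e.\ a $*$-homomorphism. Equivalently, $C^*(H)$ is the set of polynomials in $H$, and multiplicativity on powers follows by induction from $\Phi(XH)=\Phi(X)\Phi(H)$.

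The only delicate point is the first step, namely identifying the Kadison--Schwarz defect exactly as $T^*T$ and deducing the intertwining relation. Everything else is formal algebra. The step uses only unitality, so trace preservation is not needed for this theorem itself; it is needed only in Lemma~\ref{lemma_unit_eigenstates} to produce the equality $\Phi(A^2)=\Phi(A)^2$.
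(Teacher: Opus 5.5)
Your proposal is correct and follows the paper's proof essentially step for step: the same dilation $V$ with $P=VV^*$, the same Kadison--Schwarz defect $T^*T$ with $T=(I-P)(H\otimes I)V$, the same intertwining $(H\otimes I)V=V\Phi(H)$ and its adjoint, and the same one-line computations of $\Phi(XH)$ and $\Phi(HX)$. You also show that the multiplicative domain is a unital $*$-subalgebra (closed under linear combinations, products and adjoints), which justifies the final $C^*(H)$ clause that the paper states without proof.
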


\noindent
\begin{remark}For general (not necessarily Hermitian) elements,
\cite{choi74} establishes the one-sided conclusion 
$\Phi(XA) = \Phi(X)\Phi(A)$ from the hypothesis 
$\Phi(A^*A) = \Phi(A^*)\Phi(A)$, requiring only $2$-positivity 
of~$\Phi$.
\end{remark}

\begin{proof}
Let $V$ and $P = VV^*$ be as in the proof of 
Lemma~\ref{lem:kadison_schwarz}. The equality 
$\Phi(H^2) = \Phi(H)^2$ means
\[
V^*\bigl[(I - P)(H \otimes I)\bigr]^*\bigl[(I - P)(H \otimes I)\bigr]V = 0.
\]
Since $V$ is an isometry, this forces 
$(I - P)(H \otimes I)V = 0$, i.e.,
\begin{equation}
\label{equ_intertwining}
(H \otimes I)\,V = P\,(H \otimes I)\,V = V\,\Phi(H).
\end{equation}
Since $H = H^*$, taking adjoints gives 
$V^*(H \otimes I) = \Phi(H)\,V^*$. Therefore, for any 
$X \in M_n(\bC)$:
\begin{align*}
\Phi(XH) &= V^*(X \otimes I)(H \otimes I)\,V 
= V^*(X \otimes I)\,V\,\Phi(H) = \Phi(X)\,\Phi(H), \\
\Phi(HX) &= V^*(H \otimes I)(X \otimes I)\,V 
= \Phi(H)\,V^*(X \otimes I)\,V = \Phi(H)\,\Phi(X). 
\end{align*}
\end{proof}

\section{Log-determinant and capacity tools} 


\begin{lemma}[Derivatives of log-determinant along a curve]
\label{lemma_logdet_derivatives}
Let $\phi: \mathbb{R} \to \mathcal{P}_n$ be a smooth curve of positive definite matrices. Then
\begin{align}
\frac{d}{dt}\log\det(\phi(t)) &= \Tr[\phi(t)^{-1}\,\phi'(t)], \label{equ_logdet_first} \\[4pt]
\frac{d^2}{dt^2}\log\det(\phi(t)) &= \Tr[\phi(t)^{-1}\,\phi''(t)] - \Tr\bigl[(\phi(t)^{-1}\,\phi'(t))^2\bigr]. \label{equ_logdet_second}
\end{align}
\end{lemma}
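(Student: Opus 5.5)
The first identity follows from the chain rule combined with Jacobi's formula for the derivative of the determinant. I will first show that, for any invertible $X$ and any direction $H$,
\[
\frac{d}{ds}\Big|_{s=0}\log\det(X+sH)=\Tr(X^{-1}H).
\]
To see this, factor $X+sH = X(I+sX^{-1}H)$. This gives $\log\det(X+sH)=\log\det X+\log\det(I+sX^{-1}H)$. Expanding $\det(I+sM)=1+s\Tr M+O(s^2)$, either from the eigenvalues or from the Leibniz expansion, we obtain the derivative $\Tr M$ with $M=X^{-1}H$.

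The logarithm is unambiguous here. The curve $\phi(t)$ is positive definite, so $\det\phi(t)>0$ and $\log\det\phi(t)=\sum_j\log\lambda_j(\phi(t))$ is a smooth real function. Applying the chain rule along the curve with $X=\phi(t)$ and $H=\phi'(t)$ then gives \eqref{equ_logdet_first}.

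For \eqref{equ_logdet_second}, I will differentiate $t\mapsto\Tr[\phi(t)^{-1}\phi'(t)]$ by the product rule. The trace is linear, so
\[
\frac{d}{dt}\Tr[\phi^{-1}\phi']=\Tr\Big[\tfrac{d}{dt}(\phi^{-1})\,\phi'\Big]+\Tr[\phi^{-1}\phi''].
\]
The derivative of the inverse comes from differentiating $\phi\phi^{-1}=I$. This yields $\phi'\phi^{-1}+\phi\,\tfrac{d}{dt}(\phi^{-1})=0$, hence $\tfrac{d}{dt}(\phi^{-1})=-\phi^{-1}\phi'\phi^{-1}$; this is the same computation already used in Step~5 of the proof of Lemma~\ref{lem:transversality_general}. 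Substituting, the first term becomes $-\Tr[\phi^{-1}\phi'\phi^{-1}\phi']=-\Tr[(\phi^{-1}\phi')^2]$, which is exactly the claimed formula.

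No step is a genuine obstacle. The only point needing care is justifying Jacobi's formula, specifically that the linear coefficient of $\det(I+sM)$ is $\Tr M$. I would cite this as standard, or verify it by triangularizing $M$ (Schur form), so that $\det(I+sM)=\prod_j(1+s\mu_j)$ with $\mu_j$ the eigenvalues of $M$. Smoothness of $\phi^{-1}$ is automatic, since inversion is a rational, hence smooth, map on invertible matrices.
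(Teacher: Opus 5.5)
Your proposal is correct and follows essentially the same route as the paper: Jacobi's formula via the expansion $\det(I+sM)=1+s\Tr M+O(s^2)$ for the first derivative, then the product rule together with $\tfrac{d}{dt}(\phi^{-1})=-\phi^{-1}\phi'\phi^{-1}$ (obtained by differentiating $\phi\phi^{-1}=I$) for the second. Your added remarks on smoothness of $\log\det$ and of inversion are fine.
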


\begin{proof}
For the first derivative, the expansion 
$\det(I + \epsilon A) = 1 + \epsilon\,\Tr(A) + O(\epsilon^2)$ gives
\[
\frac{d}{dt}\log\det(\phi) 
= \frac{1}{\det(\phi)}\frac{d}{dt}\det(\phi) 
= \Tr[\phi^{-1}\,\phi'].
\]
For the second derivative, differentiating $\phi\,\phi^{-1} = I$ yields
$\frac{d}{dt}(\phi^{-1}) = -\phi^{-1}\,\phi'\,\phi^{-1}$.
Applying the product rule to~\eqref{equ_logdet_first}:
\begin{align*}
\frac{d^2}{dt^2}\log\det(\phi)
&= \Tr\!\left[\frac{d}{dt}(\phi^{-1})\cdot\phi'\right] + \Tr[\phi^{-1}\,\phi''] \\
&= -\Tr[\phi^{-1}\,\phi'\,\phi^{-1}\,\phi'] + \Tr[\phi^{-1}\,\phi''] \\
&= \Tr[\phi^{-1}\,\phi''] - \Tr\bigl[(\phi^{-1}\,\phi')^2\bigr]. \qedhere
\end{align*}
\end{proof}

\begin{lemma}[Convexity of log-determinant along CP image of a geodesic]
\label{lemma_logdet_convexity}
Let $\eta: M_n(\bC) \to M_n(\bC)$ be a completely positive map, 
let $B \succ 0$, and suppose $\eta(B^t) > 0$ for all $t$ in an interval. 
Set $L = \log B$ and define $h(t) = \log\det(\eta(B^t))$. Then
\begin{equation}
\label{equ_logdet_convexity}
h''(t) = \Tr[\Psi_t(L^2)] - \Tr[\Psi_t(L)^2] \geq 0,
\end{equation}
where $\Psi_t: M_n(\bC) \to M_n(\bC)$ is the unital CP map
\[
\Psi_t(X) = \eta(B^t)^{-1/2}\,\eta(B^{t/2}\, X\, B^{t/2})\,\eta(B^t)^{-1/2}.
\]
\end{lemma}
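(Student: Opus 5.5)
Set $\phi(t) := \eta(B^t)$ and apply Lemma~\ref{lemma_logdet_derivatives}. Since $\frac{d}{dt}B^t = B^t L = B^{t/2} L B^{t/2}$ (all functions of $B$ commute), linearity of $\eta$ gives $\phi'(t) = \eta(B^{t/2} L B^{t/2})$. Similarly $\phi''(t) = \eta(B^{t/2} L^2 B^{t/2})$. Write $N := \eta(B^t)^{-1/2}$, so that $\Psi_t(X) = N\,\eta(B^{t/2} X B^{t/2})\,N$. Then we have
\[
\Tr[\phi^{-1}\phi''] = \Tr[N\phi'' N] = \Tr[\Psi_t(L^2)]
\]
by cyclicity. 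We also have
\[
\Tr[(\phi^{-1}\phi')^2] = \Tr[N^2\phi' N^2 \phi'] = \Tr[(N\phi' N)^2] = \Tr[\Psi_t(L)^2].
\]
Substituting these into \eqref{equ_logdet_second} yields the claimed identity for $h''(t)$.

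Next we check that $\Psi_t$ is unital and completely positive. It is CP because it is a composition of congruences $X\mapsto B^{t/2}XB^{t/2}$ and $Y\mapsto NYN$ with the CP map $\eta$. Unitality holds because $\Psi_t(I) = N\eta(B^t)N = I$. Here we use the hypothesis $\eta(B^t)\succ 0$ on the interval, which makes $N$ well defined and smooth in $t$, so that $h$ is $C^2$ there.

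Finally, the Kadison--Schwarz inequality (Lemma~\ref{lem:kadison_schwarz}) applies to the Hermitian matrix $L$ and gives $\Psi_t(L)^2 \preceq \Psi_t(L^2)$. Taking traces gives $h''(t)\ge 0$. No step is a real obstacle. The only point needing care is the differentiation identity $\frac{d}{dt}B^t = B^{t/2}LB^{t/2}$, which is immediate in the eigenbasis of $B$, and the bookkeeping with the symmetric placement of $N$ when converting traces via cyclicity.
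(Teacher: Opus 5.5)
Your proof is correct and follows essentially the same route as the paper. Both compute $\phi'$ and $\phi''$ for $\phi(t)=\eta(B^t)$, substitute them into the second-derivative formula of Lemma~\ref{lemma_logdet_derivatives}, use cyclicity to rewrite both traces in terms of the unital CP map $\Psi_t$, and finish with the Kadison--Schwarz inequality; the only cosmetic difference is that you write $\frac{d}{dt}B^t = B^{t/2}LB^{t/2}$ directly, whereas the paper writes $B^tL$ and then uses $B^{t/2}L^kB^{t/2}=B^tL^k$.
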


\begin{proof}
Set $\phi(t) = \eta(B^t)$. Since $L$ commutes with all powers of $B$,
\[
\phi'(t) = \eta(B^t L), \qquad \phi''(t) = \eta(B^t L^2).
\]
By Lemma~\ref{lemma_logdet_derivatives},
\begin{equation}
\label{equ_h_second_raw}
h''(t) = \Tr[\phi^{-1}\,\phi''] - \Tr[(\phi^{-1}\,\phi')^2].
\end{equation}
We now rewrite each term using $\Psi_t$. Since $L$ commutes 
with $B^{t/2}$, we have $B^{t/2} L^k B^{t/2} = B^t L^k$ for $k = 1, 2$, 
and therefore
\[
\Psi_t(L^k) = \phi^{-1/2}\,\eta(B^t L^k)\,\phi^{-1/2}.
\]
For the first term in~\eqref{equ_h_second_raw}:
\[
\Tr[\Psi_t(L^2)] 
= \Tr[\phi^{-1/2}\,\eta(B^t L^2)\,\phi^{-1/2}] 
= \Tr[\phi^{-1}\,\phi''].
\]
For the second term:
\[
\Psi_t(L)^2 
= \phi^{-1/2}\,\phi'\,\phi^{-1}\,\phi'\,\phi^{-1/2},
\]
so $\Tr[\Psi_t(L)^2] = \Tr[(\phi^{-1}\,\phi')^2]$. 
This establishes~\eqref{equ_logdet_convexity}.

The map $\Psi_t$ is CP (as a composition of $\eta$ with conjugation maps) 
and unital: $\Psi_t(I) = \phi^{-1/2}\,\eta(B^t)\,\phi^{-1/2} = I$.
The Kadison--Schwarz inequality (Lemma~\ref{lem:kadison_schwarz}) gives 
$\Psi_t(L)^2 \preceq \Psi_t(L^2)$, hence $h''(t) \geq 0$.
\end{proof}

\begin{lemma}[Gradient of log-determinant composed with a linear map]
\label{lemma_gradient_logdet}
Let $\eta: M_n(\bC) \to M_n(\bC)$ be a linear map with adjoint $\eta^*$
(with respect to the inner product $\langle X, Y \rangle = \Tr(XY^*)$).
For any $X \succ 0$ with $\eta(X) \succ 0$,
\[
\nabla_X \log\det(\eta(X)) = \eta^*\bigl(\eta(X)^{-1}\bigr).
\]
\end{lemma}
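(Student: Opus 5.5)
The plan is to compute the directional derivative of $X \mapsto \log\det(\eta(X))$ from the first-derivative formula of Lemma~\ref{lemma_logdet_derivatives}, and then identify the gradient through the inner product $\langle X, Y\rangle = \Tr(XY^*)$. Fix $X \succ 0$ with $\eta(X) \succ 0$ and a Hermitian direction $H$. Since $\eta(X)$ is positive definite and $\eta$ is linear, $\eta(X+tH) = \eta(X) + t\,\eta(H)$ stays positive definite for small $|t|$. So $\phi(t) := \eta(X+tH)$ is a smooth curve in $\PP_n$ near $t=0$, with $\phi'(t) = \eta(H)$.

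By \eqref{equ_logdet_first},
\[
\frac{d}{dt}\Big|_{t=0}\log\det\eta(X+tH) = \Tr\bigl[\eta(X)^{-1}\eta(H)\bigr].
\]
I then move $\eta$ across the trace using the adjoint. Because $\eta(X)^{-1}$ is Hermitian,
\[
\Tr\bigl[\eta(X)^{-1}\eta(H)\bigr] = \Tr\bigl[\eta(H)\,(\eta(X)^{-1})^*\bigr] = \langle \eta(H), \eta(X)^{-1}\rangle = \langle H, \eta^*(\eta(X)^{-1})\rangle .
\]
Since this holds for every direction $H$, Riesz representation gives $\nabla_X \log\det(\eta(X)) = \eta^*(\eta(X)^{-1})$.

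The only point that needs care is which space the gradient lives in, and this is the main (mild) obstacle. The natural domain is $\Herm_n$ with the real inner product $\Tr(XY)$. To identify the gradient there, $\eta^*(\eta(X)^{-1})$ must itself be Hermitian. This holds whenever $\eta$ preserves adjoints, which is the case for every CP map to which the lemma is applied, and more generally for any Hermitian-preserving $\eta$. I would argue it directly: if $\eta(Y^*) = \eta(Y)^*$, then
\[
\langle Z, \eta^*(Y)\rangle = \langle \eta(Z), Y \rangle
\]
and its conjugate-symmetric counterpart show that $\eta^*$ also preserves adjoints. Hence $\eta^*$ of the Hermitian matrix $\eta(X)^{-1}$ is Hermitian.

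If instead one regards $\log\det\eta$ on all of $M_n(\C)$ near $X$, viewed as a real-differentiable function with respect to $\Re\langle\cdot,\cdot\rangle$, then the same computation applies to an arbitrary direction $H$. It gives the real part of the pairing with $\eta^*(\eta(X)^{-1})$, and the identification of the gradient is the same. With either reading, the formula used in Lemma~\ref{lemma_critical_global_minimizers} follows.
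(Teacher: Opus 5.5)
Your proof is correct and is essentially the paper's argument: compute the directional derivative $\Tr\bigl(\eta(X)^{-1}\eta(H)\bigr)$ from the first-derivative formula for $\log\det$, then move $\eta$ to the other side of the trace pairing using the adjoint. Your extra care about Hermitian-preservation of $\eta$ and $\eta^*$ makes explicit a point the paper's proof leaves implicit: its step $\Tr\bigl(\eta(X)^{-1}\eta(H)\bigr) = \langle \eta(X)^{-1}, \eta(H)\rangle$ tacitly uses that $\eta(H)$ is Hermitian.
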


\begin{proof}
Recall that for nonsingular $Y > 0$,
\[
\frac{d}{dt}\bigg|_{t=0} \log\det(Y + tH) = \Tr(Y^{-1} H),
\]
so $\nabla_Y \log\det(Y) = Y^{-1}$ under the trace inner product.

By the chain rule, since $\eta$ is linear,
\begin{align*}
\frac{d}{dt}\bigg|_{t=0} \log\det(\eta(X + tH))
&= \Tr\bigl(\eta(X)^{-1}\,\eta(H)\bigr)
= \bigl\langle \eta(X)^{-1},\, \eta(H) \bigr\rangle
\\
&= \bigl\langle \eta^*(\eta(X)^{-1}),\, H \bigr\rangle,
\end{align*}
where the last step is the definition of $\eta^*$.
\end{proof}

\section{Full $\Leftrightarrow$ rank non-decreasing (proof)} 
\label{proof_full_eq_RND}
Here we give a simple proof of Theorem~\ref{theo_pencil_map_dictionary}(i).
This equivalence corresponds to the implication (5)\,$\Leftrightarrow$\,(7) 
in Theorem~1.4 of \cite{ggow2020}, which is stated there without proof.  
We include a short argument for completeness.

\begin{proof}
Let $A = \sum_{i=1}^r A_i x_i$ be a matrix pencil of size~$n$ and 
let $\eta_A(X) = \sum_{i=1}^r A_i\, X\, A_i^\ast$ be its CP companion map.
The key observation is that for any positive semidefinite $B \succeq 0$ with 
$\mathcal{U} := \range B$, one has
\begin{equation}\label{eq:image_identity}
  \range (A_i\, B\, A_i^\ast) \;=\; A_i(\mathcal{U})
  \qquad\text{for each } i.
\end{equation}
Indeed, writing $B = B^{1/2} B^{1/2}$ gives 
$A_i B A_i^\ast = (A_i B^{1/2})(A_i B^{1/2})^\ast$, and since 
$\range(CC^\ast) = \range C$ for any matrix~$C$, 
we obtain 
$\range (A_i B A_i^\ast) = \range (A_i B^{1/2}) 
= A_i(\range B^{1/2} ) = A_i(\mathcal{U})$.
We also recall that for psd matrices $0 \leq P \leq Q$, one has 
$\range P  \subseteq \range Q$.

\medskip
\noindent
$(\Longrightarrow)$\; 
\textbf{Full implies rank non-decreasing.}\;
We prove the contrapositive.  Suppose $\eta_A$ is rank-decreasing: 
there exists $B \succeq 0$ with $\rank \eta_A(B) < \rank B$.  
Set $\mathcal{U} = \range B$ and $\mathcal{W} = \range \eta_A(B)$.
Since each summand $A_i B A_i^\ast$ is positive semidefinite and 
$0 \leq A_i B A_i^\ast \leq \eta_A(B)$, we have 
\[
\range (A_i B A_i^\ast) \subseteq \range \eta_A(B) = \mathcal{W}.
\] 
By~\eqref{eq:image_identity}, this gives $A_i(\mathcal{U}) \subseteq \mathcal{W}$ 
for every~$i$.  Moreover,
\[
  \dim \mathcal{U} = \rank B 
  > \rank \eta_A(B) = \dim \mathcal{W}.
\]
Hence $(\mathcal{U}, \mathcal{W})$ is a shrunk subspace, so $A$ is not full.

\medskip
\noindent
$(\Longleftarrow)$\; 
\textbf{Not full implies rank-decreasing.}\;
Suppose $A$ is not full: there exist subspaces 
$\mathcal{U}, \mathcal{W} \subset \mathbb{C}^n$ with 
$A_i(\mathcal{U}) \subseteq \mathcal{W}$ for all~$i$ and 
$\dim \mathcal{U} > \dim \mathcal{W}$.  
Let $B = P_{\mathcal{U}}$ be the orthogonal projection onto~$\mathcal{U}$.  
Then $B \succeq 0$, $\rank B = \dim \mathcal{U}$, and 
$\range B = \mathcal{U}$.  
By~\eqref{eq:image_identity}, 
$\range (A_i B A_i^\ast) = A_i(\mathcal{U}) \subseteq \mathcal{W}$ 
for each~$i$, so
\[
  \range \eta_A(B) 
  = \range \Bigl(\sum_{i=1}^r A_i B A_i^\ast\Bigr) 
  \subseteq \mathcal{W},
\]
where the inclusion holds because $\mathcal{W}$ is a subspace and 
$\range (A_i B A_i^\ast) \subseteq \mathcal{W}$ for each~$i$.
Therefore,
\[
  \rank \eta_A(B)
  \leq \dim \mathcal{W} 
  < \dim \mathcal{U} 
  = \rank B,
\]
and $\eta_A$ is rank-decreasing.
\end{proof}

\begin{remark}
The equivalence above uses only the definition of fullness 
via shrunk subspaces and elementary properties of positive semidefinite 
matrices.  The deeper equivalence between fullness and full inner rank 
(invertibility over the free skew field), which is due to 
\cite{cohn85, cohn95}, is not needed here.
\end{remark}



\end{document}